\title[]{A rainbow blow-up lemma for almost optimally bounded edge-colourings}
\date{\today}
\author[S.~Ehard]{Stefan Ehard}
\address[S.~Ehard]{Institut f\"ur Optimierung und Operations Research, Universit\"at Ulm, Ulm,
Germany
}
\email{stefan.ehard@uni-ulm.de}
\author[S.~Glock]{Stefan Glock}
\author[F.~Joos]{Felix Joos}
\address[S.~Glock, F.~Joos]{School of Mathematics, University of Birmingham,
Edgbaston, Birmingham, B15 2TT, United Kingdom}
\email{[s.glock, f.joos]@bham.ac.uk}
\thanks{The research leading to these results was partially supported by the EPSRC, grant nos.~EP/N019504/1 (S.~Glock) and 
by the Deutsche Forschungsgemeinschaft (DFG, German Research Foundation) -- 339933727 (F.~Joos).
}
\newtheorem{theorem}[algorithm]{Theorem}
\newtheorem{lemma}[algorithm]{Lemma}
\newtheorem{fact}[algorithm]{Fact}
\theoremstyle{definition}
\newtheorem{conj}[algorithm]{Conjecture}
\newtheorem{defin}[algorithm]{Definition}
\newcounter{stepenv}
\newenvironment{stepenv}[1][]{\refstepcounter{stepenv}}{}
\newcounter{step}[stepenv]
\newenvironment{step}[1][]{\refstepcounter{step}\par\medskip\noindent%
        \textit{Step~\thestep. #1} \itshape\rmfamily}{\medskip}
\newcounter{substep}[step]
\renewcommand{\thesubstep}{\thestep.\arabic{substep}}
\newenvironment{substep}[1][]{\refstepcounter{substep}\par\medskip\noindent%
        \emph{Step~\thesubstep. #1} \itshape\rmfamily}{\medskip}
\newcounter{claim}[stepenv]
\newenvironment{claim}[1][]{\refstepcounter{claim}\par\medskip\noindent%
        \textit{Claim~\theclaim. #1} \itshape\rmfamily}{\medskip}
\numberwithin{equation}{section}
\definecolor{darkblue}{rgb}{0,0,0.5}
\def\noproof{{\unskip\nobreak\hfill\penalty50\hskip2em\hbox{}\nobreak\hfill%
       $\square$\parfillskip=0pt\finalhyphendemerits=0\par}\goodbreak}
\def\endproof{\noproof\bigskip}
\def\noclaimproof{{\unskip\nobreak\hfill\penalty50\hskip2em\hbox{}\nobreak\hfill%
       $-$\parfillskip=0pt\finalhyphendemerits=0\par}\goodbreak}
\def\endclaimproof{\noclaimproof\medskip}
\newdimen\margin
\def\textno#1&#2\par{
   \margin=\hsize
   \advance\margin by -4\parindent
          \setbox1=\hbox{\sl#1}
   \ifdim\wd1 < \margin
      $$\box1\eqno#2$$
   \else
      \bigbreak
      \hbox to \hsize{\indent$\vcenter{\advance\hsize by -3\parindent
      \it\noindent#1}\hfil#2$}
      \bigbreak
   \fi}
\def\proof{\removelastskip\penalty55\medskip\noindent\begin{stepenv}\end{stepenv}{\bf Proof. }} 
\def\lateproof#1{\removelastskip\penalty55\medskip\noindent\begin{stepenv}\end{stepenv}{\bf Proof of #1. }} 
\DeclareMathOperator{\dg}{deg}
\def\claimproof{\removelastskip\penalty55\medskip\noindent{\em Proof of claim: }}
\begin{document}

\newcommand{\new}[1]{\textcolor{red}{#1}}

\newcommand{\COMMENT}[1]{}
\newcommand{\TASK}[1]{}
\renewcommand{\TASK}[1]{\footnote{\textcolor{red!70!black}{#1}}} 

\newcommand{\todo}[1]{\begin{center}\textbf{to do:} #1 \end{center}}

\def\eps{{\varepsilon}}
\def\heps{{\hat{\varepsilon}}}

\newcommand{\ex}{\mathbb{E}}
\newcommand{\pr}{\mathbb{P}}
\newcommand{\cB}{\mathcal{B}}
\newcommand{\cA}{\mathcal{A}}
\newcommand{\cE}{\mathcal{E}}
\newcommand{\cS}{\mathcal{S}}
\newcommand{\cF}{\mathcal{F}}
\newcommand{\cG}{\mathcal{G}}
\newcommand{\bL}{\mathbb{L}}
\newcommand{\bF}{\mathbb{F}}
\newcommand{\bZ}{\mathbb{Z}}
\newcommand{\cH}{\mathcal{H}}
\newcommand{\cC}{\mathcal{C}}
\newcommand{\cM}{\mathcal{M}}
\newcommand{\bN}{\mathbb{N}}
\newcommand{\bR}{\mathbb{R}}
\def\O{\mathcal{O}}
\newcommand{\cP}{\mathcal{P}}
\newcommand{\cQ}{\mathcal{Q}}
\newcommand{\cR}{\mathcal{R}}
\newcommand{\cJ}{\mathcal{J}}
\newcommand{\cL}{\mathcal{L}}
\newcommand{\cK}{\mathcal{K}}
\newcommand{\cD}{\mathcal{D}}
\newcommand{\cI}{\mathcal{I}}
\newcommand{\cN}{\mathcal{N}}
\newcommand{\cV}{\mathcal{V}}
\newcommand{\cT}{\mathcal{T}}
\newcommand{\cU}{\mathcal{U}}
\newcommand{\cX}{\mathcal{X}}
\newcommand{\cZ}{\mathcal{Z}}
\newcommand{\cW}{\mathcal{W}}
\newcommand{\1}{{\bf 1}_{n\not\equiv \delta}}
\newcommand{\eul}{{\rm e}}
\newcommand{\Erd}{Erd\H{o}s}
\newcommand{\cupdot}{\mathbin{\mathaccent\cdot\cup}}
\newcommand{\whp}{whp }
\newcommand{\bX}{\mathcal{X}}
\newcommand{\bV}{\mathcal{V}}
\newcommand{\hbX}{\widehat{\mathcal{X}}}
\newcommand{\hbV}{\widehat{\mathcal{V}}}
\newcommand{\hA}{\widehat{A}}
\newcommand{\tA}{\widetilde{A}}
\newcommand{\hX}{\widehat{X}}
\newcommand{\hV}{\widehat{V}}
\newcommand{\tX}{\widetilde{X}}
\newcommand{\tV}{\widetilde{V}}
\newcommand{\Vc}{\overline{V}}
\newcommand{\Xc}{\overline{X}}
\newcommand{\cbI}{\overline{\mathcal{I}^\alpha}}
\newcommand{\hAj}{\widehat{A}^\sigma_j}
\newcommand{\hVM}{V^M}
\newcommand{\Haux}{H^{+}}
\newcommand{\Abad}{A^{bad}}
\newcommand{\Agood}{A'}
\newcommand{\Aupd}{A^{\sigma}}
\newcommand{\Agoodupd}{A^\ast}
\newcommand{\Gbad}{G^{bad}}
\newcommand{\Ggood}{G'}

\newcommand{\bfi}{\mathbf{i}}

\newcommand{\bd}{\mathbf{d}}

\newcommand{\doublesquig}{%
  \mathrel{%
    \vcenter{\offinterlineskip
      \ialign{##\cr$\rightsquigarrow$\cr\noalign{\kern-1.5pt}$\rightsquigarrow$\cr}%
    }%
  }%
}

\newcommand{\defn}{\emph}

\newcommand\restrict[1]{\raisebox{-.5ex}{$|$}_{#1}}

\newcommand{\prob}[1]{\mathrm{\mathbb{P}}\left[#1\right]}
\newcommand{\cprob}[2]{\prob{#1 \;\middle|\; #2}}
\newcommand{\expn}[1]{\mathrm{\mathbb{E}}\left[#1\right]}
\def\gnp{G_{n,p}}
\def\G{\mathcal{G}}
\def\lflr{\left\lfloor}
\def\rflr{\right\rfloor}
\def\lcl{\left\lceil}
\def\rcl{\right\rceil}

\newcommand{\qbinom}[2]{\binom{#1}{#2}_{\!q}}
\newcommand{\binomdim}[2]{\binom{#1}{#2}_{\!\dim}}

\newcommand{\grass}{\mathrm{Gr}}

\newcommand{\brackets}[1]{\left(#1\right)}
\def\sm{\setminus}
\newcommand{\Set}[1]{\left\{#1\right\}}
\newcommand{\set}[2]{\left\{#1\,:\;#2\right\}}
\newcommand{\krq}[2]{K^{(#1)}_{#2}}
\newcommand{\ind}[1]{$\mathbf{S}(#1)$}
\newcommand{\indcov}[1]{$(\#)_{#1}$}
\def\In{\subseteq}
\newcommand{\orb}{orb}

\begin{abstract} 
\noindent
A subgraph of an edge-coloured graph is called rainbow if all its edges have different colours. 
We prove a rainbow version of the blow-up lemma of Koml\'os, S\'ark\"ozy and Szemer\'edi that applies to almost optimally bounded colourings. A corollary of this is that there exists a rainbow copy of any bounded-degree spanning subgraph $H$ in a quasirandom host graph $G$, assuming that the edge-colouring of $G$ fulfills a boundedness condition that is asymptotically best possible. 

This has many applications beyond rainbow colourings, for example to graph decompositions, orthogonal double covers and graph labellings.
\end{abstract}

\maketitle
\section{Introduction}
We study rainbow embeddings of bounded-degree spanning subgraphs into quasirandom graphs with almost optimally bounded edge-colourings. Moreover, following the recent work of Montgomery, Pokrovskiy and Sudakov~\cite{MPS:18} on embedding rainbow trees, we present several applications to graph decompositions, graph labellings and orthogonal double covers.

Given a (not necessarily proper) edge-colouring of a graph, a subgraph is called \defn{rainbow} if all its edges have different colours. 
Rainbow colourings appear in many different contexts of combinatorics, and many problems beyond graph colouring can be translated into a rainbow subgraph problem. What makes this concept so versatile is that it can be used to find `conflict-free' subgraphs. More precisely, an edge-colouring of a graph $G$ can be interpreted as a system of conflicts on $E(G)$, where two edges conflict if they have the same colour. A subgraph is then conflict-free if and only if it is rainbow. 
For instance, rainbow matchings in $K_{n,n}$ can be used to model transversals in Latin squares. The study of Latin squares dates back to the work of Euler in the 18th century and has since been a fascinating and fruitful area of research. The famous Ryser--Brualdi--Stein conjecture asserts that every $n\times n$ Latin square has a partial transversal of size~$n-1$, which is equivalent to saying that any proper $n$-edge-colouring of $K_{n,n}$ admits a rainbow matching of size~$n-1$.

As a second example, we consider a powerful application of rainbow colourings to graph decompositions. Graph decomposition problems are central problems in graph theory with a long history, and many fundamental questions are still unanswered. We say that \defn{$H_1,\dots,H_t$ decompose $G$} if $H_1,\dots,H_t$ are edge-disjoint subgraphs of $G$ covering every edge of~$G$. Perhaps one of the oldest decomposition results is Walecki's theorem from 1892 saying that $K_{2n+1}$ can be decomposed into Hamilton cycles. His construction not only gives any decomposition, but a `cyclic' decomposition based on a rotation technique, by finding one Hamilton cycle $H^\ast$ in $K_{2n+1}$ and a permutation $\pi$ on $V(K_{2n+1})$ such that the permuted copies $\pi^i(H^\ast)$ of $H^\ast$ for $i=0,\dots,n-1$ are pairwise edge-disjoint (and thus decompose $K_{2n+1}$). The difficulty here is of course finding $H^\ast$ given $\pi$, or vice versa. Unfortunately, for many other decomposition problems, this is not as easy, or indeed not possible at all. In recent years, some exciting progress has been made in the area of (hyper-)graph decompositions, for example Keevash's proof of the Existence conjecture~\cite{keevash:14} and generalizations thereof~\cite{GKLO:16,GKLO:17,keevash:18b}, progress on the Gy\'arf\'as--Lehel tree-packing conjecture~\cite{ABCT:19,JKKO:ta} and the resolution of the Oberwolfach problem~\cite{GJKKO:18}. Those results are based on very different techniques, such as absorbing-type methods, randomised constructions and variations of Szemeredi's regularity technique.
In a recent paper, Montgomery, Pokrovskiy and Sudakov~\cite{MPS:18} brought the use of the rotation technique back into focus when proving an old conjecture of Ringel approximately, by reducing it to a rainbow embedding problem. A similar approach has previously been used by Drmota and Llad\'o~\cite{DL:14} in connection with a bipartite version of Ringel's conjecture posed by Graham and H\"aggkvist.
Ringel conjectured in 1963 that $K_{2n+1}$ can be decomposed into $2n+1$ copies of any given tree with $n$ edges. A strengthening of Ringel's conjecture is due to Kotzig~\cite{kotzig:73}, who conjectured in 1973 that there even exists a cyclic decomposition. This can be phrased as a rainbow embedding problem as follows: Order the vertices of $K_{2n+1}$ cyclically and colour each edge $\Set{i,j}\in E(K_{2n+1})$ with its distance (that is, the distance of $i,j$ in the cyclic ordering), which is a number between $1$ and~$n$. The simple but crucial observation is that if $T$ is a rainbow subtree, then $T$ can be rotated according to the cyclic vertex ordering, yielding $2n+1$ edge-disjoint copies of~$T$ (and thus a cyclic decomposition if $T$ has $n$ edges). Note that for each vertex $v$ and any given distance, there are only two vertices which have exactly this distance from~$v$. More generally, an edge-colouring is called \defn{locally $k$-bounded} if each colour class has maximum degree at most~$k$. The following statement thus implies Kotzig's and Ringel's conjecture: Any locally $2$-bounded edge-colouring of $K_{2n+1}$ contains a rainbow copy of any tree with $n$~edges.
Montgomery, Pokrovskiy and Sudakov~\cite{MPS:18} proved the following asymptotic version of this statement, which in turn yields asymptotic versions of these conjectures (all asymptotic terms are considered as $n\to \infty$).

\begin{theorem}[\cite{MPS:18}]\label{thm:MPS}
For fixed $k$, any locally $k$-bounded edge-colouring of~$K_n$ contains a rainbow copy of any tree with $(1-o(1))n/k$ edges.
\end{theorem}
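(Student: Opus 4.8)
The plan is to combine a structural dichotomy for trees with a randomised greedy embedding and an absorption argument designed to act on the \emph{colours} rather than the vertices. Two observations frame the problem. Write $m=(1-o(1))n/k$ for the number of edges of $T$; then $T$ has $(1+o(1))n/k\le (1/2+o(1))n$ vertices, so it is far from spanning and we may afford to be wasteful with vertices. The colour budget, by contrast, is essentially tight: in a locally $k$-bounded colouring of $K_n$ each colour class has at most $kn/2$ edges, so the colouring uses at least $\binom{n}{2}/(kn/2)=(n-1)/k$ colours, while a rainbow copy of $T$ consumes $m=(1-o(1))(n-1)/k$ distinct colours. Hence almost every colour must be used, and a naive greedy breadth-first embedding breaks down near the end: after placing all but a few vertices one has used $\approx n/k$ colours, each blocking up to $k$ edges at the next vertex, so nearly all $n$ candidate images are forbidden. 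Overcoming this scarcity of colours is the crux.

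As a first step I would apply the standard leaves/bare-path dichotomy: every tree with $m$ edges either has at least $m/4$ leaves, or contains $\Omega(m)$ vertex-disjoint \emph{bare paths} of a fixed constant length, i.e.\ paths whose internal vertices have degree $2$ in $T$. In either case, deleting the leaves (respectively contracting the bare paths) yields a tree $T'$ with $(1-\Omega(1))m$ edges, and reattaching a leaf at an embedded vertex, or re-inserting a bare path between two embedded vertices, is a local operation with many completions provided the current partial embedding is not adversarial.

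The embedding itself proceeds with a reserve and an absorber. Set aside a sparse \emph{colour-absorber} $A\subseteq T$ with $o(n)$ edges, built as a disjoint union of many constant-size flexible gadgets (short paths or double stars), each admitting several rainbow realisations on several different colour sets. Embed $T\setminus A$ by a random greedy process in breadth-first order, each new image chosen uniformly among the valid candidates; martingale concentration (Azuma/Freedman) keeps the partial embedding pseudorandom — free vertices stay well spread, as do the colours seen at each vertex — as long as the number of valid candidates stays polynomially large, which carries the process down to a leftover $R$ consisting of $o(n)$ vertices and colours. Then embed the leftover tree together with $A$ onto $R$: one arranges the leftover to be small enough that its own colour budget is no longer tight, and the gadgets of $A$ can be re-routed to realise whichever colours of $R$ end up awkwardly placed, the re-routing organised by a Hall-type matching between leftover colours and gadgets. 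In the many-leaves case the leaves are then attached simultaneously via Hall's theorem, the relevant bipartite availability graph being almost regular of linear degree; in the bare-path case each path is completed, one at a time, by a short rainbow connection avoiding the small set of already used vertices and colours.

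The main obstacle is the colour-absorption. With essentially no colour reserve, the absorber must be able to soak up a near-arbitrary leftover palette, so one must (i) show that the random main phase leaves a configuration that is \emph{generic} in the precise sense the absorber needs, and (ii) track error terms finely enough that the unavoidable inefficiencies consume only the $o(1)$ slack in $m$ and nothing more. Designing the gadgets and the rerouting matching — in effect, making absorption work at the level of colours rather than vertices — is where the real work lies; the vertex side of the embedding is comparatively routine precisely because $T$ is so sparse relative to $n$.
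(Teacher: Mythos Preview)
This theorem is not proved in the paper at all: it is quoted as a result of Montgomery, Pokrovskiy and Sudakov~\cite{MPS:18} and used only as motivation and context for the paper's own main result (Theorem~\ref{thm:main simplified}). There is therefore no proof in the paper to compare your proposal against.

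That said, your sketch is broadly in the spirit of the actual argument in~\cite{MPS:18}: the leaves/bare-path dichotomy and an absorption scheme tailored to colours (rather than vertices) are indeed the backbone of their proof. But as written your proposal is only a plan, not a proof. The decisive step --- designing colour-absorbing gadgets that can handle an essentially arbitrary leftover palette when there is almost no slack in the colour budget, and proving that the random main phase leaves a configuration compatible with those gadgets --- is precisely the hard part, and you have only named it rather than carried it out. Phrases such as ``one arranges the leftover to be small enough'' and ``the gadgets of $A$ can be re-routed'' hide the entire technical content; in particular, you have not specified what the gadgets are, why they exist inside an arbitrary tree, or why the Hall-type matching at the end has no short side. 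If you intend this as a genuine proof you would need to make all of that explicit; as it stands it is an accurate outline of where the difficulty lies but not a resolution of it.
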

Our main results are very similar in spirit. 
Roughly speaking, instead of dealing with trees, our results apply to general graphs~$H$, but we require $H$ to have bounded degree, whereas one of the great achievements of~\cite{MPS:18} is that no such requirement is necessary when dealing with trees. 
The following is a special case of our main result (Theorem~\ref{thm:quasirandom}). An edge-colouring is called \defn{(globally) $k$-bounded} if any colour appears at most $k$~times. 
\begin{theorem}\label{thm:main simplified}
Suppose $H$ is a graph on at most $n$ vertices with $\Delta(H)= \O(1)$.
Then any locally $\O(1)$-bounded and globally $(1-o(1)){\binom{n}{2}}/{e(H)}$-bounded edge-colouring of~$K_n$ contains a rainbow copy of~$H$.
\end{theorem}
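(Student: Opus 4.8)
The plan is to deduce Theorem~\ref{thm:main simplified} from the general rainbow blow-up lemma (in the quasirandom form, Theorem~\ref{thm:quasirandom}) by presenting $K_n$, together with a suitable vertex partition, as an admissible instance. First I would reduce to the case $|V(H)| = n$: if $H$ has fewer than $n$ vertices, add isolated vertices, which leaves $e(H)$ — and hence the global boundedness hypothesis — untouched. Since $\Delta(H) = \O(1)$, the Hajnal--Szemer\'edi theorem provides an equitable proper colouring of $H$ with $r := \Delta(H) + 1 = \O(1)$ colours; writing $X_1, \dots, X_r$ for the colour classes, each $X_i$ is independent in $H$ and $|X_i| \in \{\lfloor n/r \rfloor, \lceil n/r \rceil\}$. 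Thus $H$ embeds into the blow-up of $K_r$ along $X_1, \dots, X_r$; in blow-up-lemma language, the reduced graph is $K_r$.

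Next I would partition $V(K_n)$ into sets $V_1, \dots, V_r$ with $|V_i| = |X_i|$ (any partition will do). As $K_n$ is complete, each pair $(V_i, V_j)$ with $ij \in \binom{[r]}{2}$ spans a complete bipartite graph, which is $(\eps, 1)$-super-regular for every $\eps > 0$; so $(K_n; V_1, \dots, V_r)$ with reduced graph $K_r$ meets the super-regularity requirements with room to spare, and the parts are balanced as needed. For the colouring hypotheses: local $\O(1)$-boundedness is assumed outright, and since local and global boundedness are inherited by subgraphs there is no issue in restricting attention to the relevant bipartite pairs. The global requirement of the blow-up lemma is, roughly, that no colour appear more than $(1-o(1))$ times the typical number of host edges per embedded edge, which for the complete host $K_n$ equals $\binom{n}{2}/e(H)$ — exactly the assumed global $(1-o(1))\binom{n}{2}/e(H)$-bound. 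The blow-up lemma then produces a rainbow copy of $H$ in $K_n$ with $\phi(X_i) \subseteq V_i$, which is what we want.

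The reduction above is routine; the real obstacle is the rainbow blow-up lemma itself (Theorem~\ref{thm:quasirandom}), whose proof occupies the bulk of the paper. The crux is that the global bound is asymptotically best possible, leaving essentially no slack: a uniformly random embedding uses each colour only $1 - o(1)$ times in expectation, so a rainbow copy exists only barely, and one cannot afford the generous colour reservations that a weaker bound would allow. The proof must therefore combine (i) an absorbing structure, set aside in advance, capable of rainbow-completing a small and flexible remainder; (ii) a semi-random, nibble-type embedding of the bulk of $H$, where images are drawn from carefully maintained candidate sets so that the edges embedded so far always carry pairwise distinct colours; and (iii) concentration and local-lemma-type estimates (Azuma-- or Talagrand-type inequalities, or a conflict-free matching argument) guaranteeing that throughout the process every vertex keeps many admissible images and only few colours ever become blocked. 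Running the usual blow-up-lemma bookkeeping and the colour-usage control simultaneously, under a near-tight colour budget, is where essentially all the difficulty lies.
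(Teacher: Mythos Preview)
Your reduction is correct in spirit but conflates two statements. The paper derives Theorem~\ref{thm:main simplified} from Theorem~\ref{thm:quasirandom} in one line: $K_n$ is $(\eps,1)$-quasirandom for every $\eps>0$ (once $n$ is large), and $e(K_n)=\binom{n}{2}$, so the global bound $(1-o(1))\binom{n}{2}/e(H)$ is exactly $(1-\gamma)e(G)/e(H)$. No vertex partition is needed at this stage; Theorem~\ref{thm:quasirandom} takes a quasirandom host, not a blow-up instance.

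What you actually describe --- equitable partition of $H$ via Hajnal--Szemer\'edi, matching partition of $V(K_n)$, super-regular pairs, reduced graph $K_r$ --- is the setup for Lemma~\ref{lem:main}, not Theorem~\ref{thm:quasirandom}. If that is your intended route, the claim ``any partition will do'' is false: condition~\ref{blow up lemma boundedness} of Lemma~\ref{lem:main} reads
\[
\sum_{ij\in\binom{[r]}{2}} e^\alpha_G(V_i,V_j)\, e_H(X_i,X_j)\le (1-\gamma)d n'^2,
\]
where $n'\approx n/r$ is the cluster size, and an adversarial (non-random) partition can concentrate a colour class entirely between one pair $(V_i,V_j)$ for which $e_H(X_i,X_j)$ is large, making the left side too big. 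This is precisely why the paper, in deriving Theorem~\ref{thm:quasirandom} from Lemma~\ref{lem:main}, partitions $V(G)$ \emph{randomly} and uses concentration to force $e^\alpha_G(V_i,V_j)\approx 2e^\alpha(G)/r^2$ for each heavy colour. So either cite Theorem~\ref{thm:quasirandom} and drop the partition altogether, or keep the partition but make it random and verify~\ref{blow up lemma boundedness}. Your closing paragraph on the proof strategy for the blow-up lemma itself is a fair high-level sketch.
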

It is plain that any locally $k$-bounded colouring is (globally) $kn/2$-bounded. Thus, Theorem~\ref{thm:main simplified} implies Theorem~\ref{thm:MPS} for bounded-degree trees.
Note that the assumption that the colouring is $(1-o(1)){\binom{n}{2}}/{e(H)}$-bounded is asymptotically best possible in the sense that if the colouring was not ${\binom{n}{2}}/{e(H)}$-bounded, there might be less than $e(H)$ colours, making the existence of a rainbow copy of~$H$ impossible. 

Beyond the approximate solution of Ringel's conjecture, Montgomery, Pokrovskiy and Sudakov also provide applications of their result to graph labelling and orthogonal double covers. Our applications are very much inspired by theirs and essentially proved analogously. We refer the discussion of these applications to Section~\ref{sec:apps}.

Rainbow embedding problems have also been extensively studied for their own sake. For instance, Erd\H{o}s and Stein asked for the maximal $k$
such that any $k$-bounded edge-colouring of $K_n$ contains a rainbow Hamilton cycle (cf.~\cite{ENR:83}).
After several subsequent improvements, Albert, Frieze and Reed~\cite{AFR:95} showed that $k=\Omega(n)$. 
Theorem~\ref{thm:main simplified} implies that under the additional assumption that the colouring is locally $\O(1)$-bounded, we have $k=(1-o(1))n/2$, which is essentially best possible. This is not a new result but also follows from results in~\cite{KKKO:18,MPS:19}. However, the results in~\cite{KKKO:18,MPS:19} are limited to finding Hamilton cycles or $F$-factors (in fact, approximate decompositions into these structures). Theorem~\ref{thm:main simplified} allows the same conclusion if we seek an $\sqrt{n/2}\times \sqrt{n/2}$ grid, say, or any other bounded-degree graph with roughly $n$~edges. For general subgraphs~$H$, the best previous result is due to B\"ottcher, Kohayakawa and Procacci~\cite{BKP:12}, who showed that given any $n/(51\Delta^2)$-bounded edge-colouring of $K_n$ and any graph $H$ on $n$ vertices with $\Delta(H)\le \Delta$, one can find a rainbow copy of~$H$. Our Theorem~\ref{thm:main simplified} improves, for bounded-degree graphs, the global boundedness condition to an asymptotically best possible one, under the additional assumption that the colouring is locally $\O(1)$-bounded.
\COMMENT{It would be very interesting to remove this additional assumption. With some additional ideas, it might be possible to use our method to replace $\O(1)$ with $n/\log^{\O(1)}n$, but eliminating such a condition completely would likely require new methods.}

\subsection{Main result}

We now state a more general version of Theorem~\ref{thm:main simplified}. We say that a graph~$G$ on $n$ vertices is \emph{$(\eps,d)$-quasirandom}
if for all $v\in V(G)$ we have $\dg_G(v)=(d\pm\eps)n$, and for all disjoint $S,T\subseteq V(G)$ with $|S|,|T|\geq \eps n$,
we have $e_G(S,T)= (d\pm \eps) |S||T|$.

\begin{theorem}\label{thm:quasirandom}
For all $d,\gamma\in (0,1]$ and $\Delta,\Lambda\in \bN$, there exist $\eps>0$ and $n_0\in\mathbb{N}$ such that the following holds for all $n\geq n_0$.
Suppose $G$ and $H$ are graphs on $n$ vertices, $G$ is $(\eps,d)$-quasirandom and $\Delta(H)\leq\Delta$. 
Then given any locally $\Lambda$-bounded and globally $(1-\gamma){e(G)}/{e(H)}$-bounded edge-colouring of~$G$, there is a rainbow copy of~$H$ in~$G$.
\end{theorem}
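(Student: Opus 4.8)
The plan is to prove Theorem~\ref{thm:quasirandom} by combining a rainbow version of the blow-up lemma with an absorption/reservoir argument, following the general strategy of the Koml\'os--S\'ark\"ozy--Szemer\'edi blow-up lemma but with the randomised embedding machinery replaced by one that tracks colour usage. Concretely, I would first reduce the spanning embedding problem to a bounded-size setup: apply the regularity method and a Hajnal--Szemer\'edi-type partition to split $V(H)$ into clusters $X_1,\dots,X_r$ and $V(G)$ into clusters $V_1,\dots,V_r$ of equal size, so that $H$ decomposes into a bounded-degree ``reduced graph'' structure where each $(V_i,V_j)$ pair corresponding to an edge of the reduced graph is super-regular (this is where quasirandomness of $G$ is used: it guarantees the requisite super-regular pairs after an appropriate partition). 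The colouring stays globally $(1-\gamma)e(G)/e(H)$-bounded throughout, and since $\Delta(H)=\O(1)$ each colour needs to be ``spent'' only a constant number of times in each relevant cluster pair.

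\medskip

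Next I would set up the randomised greedy embedding of $H$ into $G$ cluster by cluster, in the style of the blow-up lemma proof: maintain for each unembedded vertex $x\in V(H)$ a set $C(x)\subseteq V(G)$ of candidate images, embed vertices in a carefully chosen order so that each embedded vertex $x$ sends $x$ to a uniformly random element of its current candidate set, and update candidate sets of neighbours accordingly. The crucial new ingredient is a colour-availability invariant: when embedding $x$ with neighbours already embedded to $u_1,\dots,u_s$, we must choose an image $v$ such that the edges $vu_1,\dots,vu_s$ all receive distinct colours, and distinct from all colours already used. To control this, I would keep, alongside $C(x)$, the information of which colours remain ``cheap'' (used far fewer than $(1-\gamma)e(G)/e(H)$ times) and restrict attention to images $v$ whose incident edges to the embedded neighbours are cheap and mutually distinct; a first-moment/concentration argument (Azuma/Freedman-type inequalities applied to the exposure martingale of the random embedding, exactly as in the quantitative blow-up lemma) shows that a $(1-o(1))$-fraction of each candidate set remains admissible at every step, because local $\Lambda$-boundedness means each bad colour kills only $O(1)$ vertices per cluster and the $\gamma$-slack gives room for the union bound over $e(H)=O(n)$ colours.

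\medskip

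The third ingredient is absorption for the last few vertices: the final cluster $V_r$ (or a small random reservoir set within $V(G)$) is handled not greedily but by an augmenting-path / Hall-type argument that finds a rainbow perfect matching between the remaining unembedded vertices of $H$ and the remaining vertices of $G$ subject to all edge-colour constraints. Since at that stage only $\O(\eps n)$ vertices remain and the relevant bipartite ``compatibility'' graph is still quasirandom with the colour constraints removing only a vanishing fraction of edges, a defect-Hall argument (or a direct application of the rainbow matching results one can cite) completes the embedding into a rainbow copy of $H$. One sets the constants as $\eps \ll \eps(d,\gamma,\Delta,\Lambda)$ and $n_0$ large.

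\medskip

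The main obstacle, I expect, is maintaining the colour-availability invariant simultaneously with the usual candidate-set invariant of the blow-up lemma: each random embedding step perturbs both the combinatorial candidate sets and the colour-usage counts, and one must show these perturbations stay small \emph{jointly} with high probability. In particular, the concentration arguments must be robust to the fact that colour counts are sums of indicator variables that are not independent (they depend on the whole history of the embedding), so the bounded-differences/martingale estimates need to be applied to carefully chosen exposure filtrations where a single vertex exposure changes each colour count by $\O(\Lambda)=\O(1)$. Getting the error terms to close --- i.e.\ ensuring the accumulated $o(1)$ losses over $r=\O(1/\eps)$ rounds do not swamp the $\gamma$-slack --- is the delicate quantitative heart of the proof, and is precisely where the ``almost optimally bounded'' (rather than genuinely optimally bounded) hypothesis is essential.
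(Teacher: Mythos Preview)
Your outer reduction --- partition $V(H)$ via Hajnal--Szemer\'edi into $r=\Delta+1$ equal independent clusters $X_1,\dots,X_r$ and partition $V(G)$ randomly into matching clusters $V_1,\dots,V_r$ --- is essentially what the paper does. One small simplification: since $G$ is already $(\eps,d)$-quasirandom, no regularity lemma is needed; a uniformly random equipartition of $V(G)$ directly yields $(O(\eps),d)$-super-regular pairs $G[V_i,V_j]$, and a McDiarmid bound (using local $\Lambda$-boundedness for the Lipschitz constant) shows each colour is spread evenly across pairs. The paper then simply invokes its rainbow blow-up lemma (Lemma~\ref{lem:main}); the substance lies in proving that lemma.

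Here your proposal diverges from the paper in a substantive way. The paper does \emph{not} use a KSS-style vertex-by-vertex random greedy embedding; it uses the R\"odl--Ruci\'nski cluster-by-cluster scheme. Each cluster $X_i$ is embedded into $V_i$ in one shot by finding an almost-perfect rainbow matching in a bipartite candidacy graph $A_i$, and the central idea is to encode ``rainbow matching in $A_i$'' as ``matching in an auxiliary hypergraph $\cH$ on $X_i\cup V_i\cup C$'' (each candidacy edge $xv$ becomes the hyperedge $\{x,v\}\cup c(xv)$). A pseudorandom hypergraph matching theorem with weight functions (Theorem~\ref{thm:hypermatching}) then gives the matching together with enough control to keep the next candidacy graphs super-regular and their colourings appropriately bounded. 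The completion step uses a reserved subset of colours and the earlier $o(n)$-bounded rainbow blow-up lemma of~\cite{GJ:18}, which is close to your absorption idea.

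The gap in your sketch is the sentence ``local $\Lambda$-boundedness means each bad colour kills only $O(1)$ vertices per cluster and the $\gamma$-slack gives room for the union bound over $e(H)=O(n)$ colours.'' The first clause is correct, but the union bound does not close: by the time most of $H$ is embedded there are $\Theta(n)$ used colours, and $\Theta(n)\cdot O(1)$ killed candidates is the same order as the candidate set $C(x)$ itself. The $\gamma$-slack in the global bound only tells you there are at least $e(H)/(1-\gamma)$ colours; it does not bound the number of used colours below $e(H)$. To rescue a vertex-by-vertex argument you would need to show that for a typical $v\in C(x)$ the specific colours $c(vu_1),\dots,c(vu_s)$ are all still unused with probability bounded away from $0$, i.e.\ you must control, for each individual colour $\alpha$, the random indicator ``$\alpha$ has been used''. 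These $\Theta(n)$ indicators are heavily dependent across the $\Theta(n)$ embedding steps, and a bounded-differences martingale on each one gives fluctuations that, summed, swamp the $\gamma$-slack. This is precisely the obstacle that led the paper to abandon per-vertex embedding in favour of the hypergraph-matching approach; the latter selects the whole cluster at once and uses weight functions to transfer the ``almost-optimal'' boundedness to the next round in a single stroke.
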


Clearly, Theorem~\ref{thm:quasirandom} implies Theorem~\ref{thm:main simplified}. We derive Theorem~\ref{thm:quasirandom} from an even more general `blow-up lemma' (Lemma~\ref{lem:main}). The original blow-up lemma of Koml\'os, S\'ark\"ozy and Szemer\'edi~\cite{KSS:97} developed roughly 20~years ago, is a powerful tool to find spanning subgraphs and has found numerous important applications in extremal combinatorics~\cite{BST:09,GJKKO:18,KSS:98,KSS:98a,KSS:01,KO:09,KO:13}. Roughly speaking, it says that given a $k$-partite graph $G$ that is `super-regular' between any two vertex classes, and a $k$-partite bounded-degree graph $H$ with a matching vertex partition, then $H$ is a subgraph of~$G$.
Note that the conclusion is trivial if $G$ is complete $k$-partite, so the crux here is that instead of requiring $G$ to be complete between any two vertex classes, super-regularity suffices. Such a scenario can often be obtained in conjunction with Szemer\'edi's regularity lemma, which makes it widely applicable. 
Many variations of the blow-up lemma have been obtained over the years~(e.g.~\cite{ABHKP:16,BKTW:15,csaba:07,keevash:11,KKOT:19,RR:99}). Recently, the second and third author~\cite{GJ:18} proved a rainbow blow-up lemma for $o(n)$-bounded edge-colourings which allows to find a rainbow embedding of~$H$. The present paper builds upon this result. The key novelty is that instead of requiring the colouring to be $o(n)$-bounded, our new result applies for almost optimally bounded colourings. (But we assume here that the colouring is locally $\O(1)$-bounded, which is not necessary in~\cite{GJ:18}).

In order to state our new rainbow blow-up lemma, we need to introduce some terminology.
If $c\colon E(G)\to C$ is an edge-colouring of a graph $G$ and $\alpha\in C$, denote by $e^\alpha(G)$ the number of $\alpha$-coloured edges of~$G$. Moreover, for disjoint $S,T\In V(G)$, denote by $e^\alpha_G(S,T)$ the number of $\alpha$-coloured edges of $G$ with one endpoint in $S$ and the other one in $T$.
Define $d_G(S,T):={e_G(S,T)}/{|S||T|}$ as the \defn{density} of the pair $S,T$ in $G$.
We say that the bipartite graph $G$ with vertex classes $(V_1,V_2)$ is \defn{$(\eps,d)$-super-regular}
if
\begin{itemize}
\item for all $S\In V_1$ and $T\In V_2$ with $|S|\ge \eps|V_1|$, $|T|\ge \eps |V_2|$, we have $d_G(S,T)=d\pm \eps$;
\item for all $i\in [2]$ and $v\in V_i$, we have $\dg_G(v)= (d\pm\eps)|V_{3-i}|$.
\end{itemize}
We say that $(H,G, (X_i)_{i\in[r]}, (V_i)_{i\in[r]})$ is an \emph{$(\eps,d)$-super-regular blow-up instance} if 
\begin{itemize}
\item $H$ and $G$ are graphs, $(X_i)_{i\in[r]}$ is a partition of $V(H)$ into independent sets, $(V_i)_{i\in[r]}$ is a partition of $V(G)$, and $|X_i|=|V_i|$ for all $i\in[r]$, and
\item for all $ij\in\binom{[r]}{2}$, the bipartite graph $G[V_i,V_j]$ is $(\eps,d)$-super-regular.
\end{itemize}
We say that $\phi\colon V(H)\to V(G)$ is an \defn{embedding of $H$ into $G$} if $\phi$ is injective and $\phi(x)\phi(y)\in E(G)$ for all $xy\in E(H)$. We also write $\phi\colon H\to G$ in this case. We say that $\phi$ is \defn{rainbow} if $\phi(H)$ is rainbow.

We now state our new rainbow blow-up lemma.

\begin{lemma}[Rainbow blow-up lemma]\label{lem:main}
For all $d,\gamma\in (0,1]$ and $\Delta,\Lambda,r\in \bN$, there exist $\eps>0$ and $n_0\in\mathbb{N}$ such that the following holds for all $n\geq n_0$.
Suppose $(H,G, (X_i)_{i\in[r]}, (V_i)_{i\in[r]})$ is an $(\eps,d)$-super-regular blow-up instance.
Assume further that 
\begin{enumerate}[label={\rm (\roman*)}]
	\item $\Delta(H)\leq \Delta$;
	\item $|V_i|=(1\pm\eps)n$ for all $i\in[r]$;
	\item\label{cond3 main} $c\colon E(G)\to C$ is a locally $\Lambda$-bounded edge-colouring such that the following holds for all $\alpha\in C$:
	$$\sum_{ij\in \binom{[r]}{2}}e^\alpha_G(V_i,V_j)e_H(X_i,X_j)\leq (1-\gamma)dn^2.$$\label{blow up lemma boundedness}
\end{enumerate}
Then there exists a rainbow embedding $\phi$ of $H$ into $G$ such that $\phi(x)\in V_i$ for all $i\in[r]$ and $x\in X_i$.
\end{lemma}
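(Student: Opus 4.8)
The plan is to build the embedding iteratively via a randomised greedy procedure, following the template of the rainbow blow-up lemma of Glock--Joos~\cite{GJ:18} but refining the analysis so that the colour-boundedness hypothesis~\ref{blow up lemma boundedness} (rather than an $o(n)$ bound) suffices. First I would set up the usual infrastructure: partition $V(H)$ into a bounded number of independent sets so that within each $X_i$ one can fix an embedding order in which each vertex has only a bounded number of already-embedded neighbours; reserve a small linear-sized ``buffer'' of vertices in each $X_i$ to be embedded last (these will be handled by an absorption/completion step); and run the main embedding on the rest. The key objects to maintain are, for each unembedded vertex $x$ in class $X_i$, a candidate set $A_x\subseteq V_i$ of vertices to which $x$ could still legally be mapped (respecting adjacencies to already-embedded neighbours), and for each colour $\alpha$ a count of how much ``budget'' for $\alpha$ remains. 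Super-regularity guarantees that candidate sets stay of size $\Theta(n)$ as long as a positive proportion of each class remains unembedded.

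The heart of the argument is the random embedding step with its two simultaneous constraints. I would embed the vertices of $H$ in rounds, in each round processing one vertex (or an independent batch) by choosing its image uniformly at random from its current candidate set, but \emph{conditioned on} not using any colour that would be used by a previously embedded edge and not violating the running colour-budget. Two families of bad events must be controlled by concentration (Azuma/Freedman-type inequalities for the self-bounding martingales that arise, exactly as in the nibble-style proofs): (a) candidate sets shrinking too fast, which is the classical blow-up-lemma concern and is handled by super-regularity plus the fact that each embedded vertex only kills a $(d\pm\eps)$-fraction of each neighbour's candidate set; and (b) colour classes being exhausted, i.e.\ some colour $\alpha$ being used on more than its allotment of edges. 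For~(b) the crucial point is a bookkeeping/amortisation argument: when we embed the edge $xy$ with $x\in X_i,y\in X_j$ and give it colour $\alpha$, we ``charge'' one unit against the term $e^\alpha_G(V_i,V_j)$ of the sum in~\ref{blow up lemma boundedness}; because $c$ is locally $\Lambda$-bounded, a random image for $x$ together with its $O(1)$ already-embedded neighbours can only be forced into $O(1)$ colours from any single colour-class incident to $A_x$, so the conditioning in step~(a) removes only a $o(1)$-fraction of candidates and the random colour chosen for each edge is close to uniform among the colours available on that super-regular pair. Tracking the partial sums $\sum_{ij}(\text{colour-}\alpha\text{ edges embedded between }X_i,X_j)$ and showing they never exceed $(1-\gamma/2)dn^2$, say, reduces to a large-deviation estimate once one checks the expected increments telescope to at most $(1-\gamma)dn^2$ over the whole process.

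After the main phase embeds all non-buffer vertices while (whp) keeping every colour under budget and every candidate set for buffer vertices large and ``spread'', I would finish with a completion step: the remaining buffer vertices form a bounded-degree graph, each has a linear candidate set in its class, and the unused colours still have spare capacity; a System-of-Distinct-Representatives / Hall-type argument (or an application of the Aharoni--Haxell rainbow-matching / defect-Hall machinery, as is standard in these completion steps) then extends $\phi$ to a rainbow embedding of all of $H$. The spread/robustness of the candidate sets needed here is exactly what the random main phase was engineered to preserve, so this step, while technical, is routine once the invariants are in place.

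The main obstacle, and where almost all the work lies, is step~(b): showing that the global colour budget in~\ref{blow up lemma boundedness} is respected throughout the randomised process. The difficulty is that the constraint is global (a single weighted sum over all pairs and all colours, with the $e_H(X_i,X_j)$ weights coupling different pairs), so a colour used between one pair of classes eats into capacity that might be needed between another; one cannot treat colours pair-by-pair. The resolution I anticipate is to design the per-edge colour-selection distribution so that the \emph{expected} consumption of each colour tracks, pair by pair, a $(1-\gamma+o(1))$-fraction of its share of the available edges on that super-regular pair, and then to union-bound a concentration inequality over all $|C|\le \Lambda\binom{n}{2}$ colours. Making the expected increments come out right requires that the random image of $x$, conditioned on all the adjacency and colour constraints, be within $o(1)$ total-variation distance of the uniform distribution on $A_x$ — this is precisely where local $\Lambda$-boundedness is used, and it is the one place the argument genuinely needs that hypothesis rather than merely the global bound.
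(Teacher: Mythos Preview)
Your proposal has a genuine gap, and it is precisely at the point you yourself flag as ``where almost all the work lies''. You assert that, because $c$ is locally $\Lambda$-bounded, the rainbow conditioning removes only an $o(1)$-fraction of the candidate set at each step. This is not justified, and in fact fails under the near-optimal global bound of~\ref{blow up lemma boundedness}. Concretely: suppose you are about to embed a vertex $x\in X_i$ with already-embedded $H$-neighbour $y$, after $m=\Theta(n)$ edges of $H$ have been embedded (hence $m$ colours already used). For each used colour $\alpha$, local $\Lambda$-boundedness gives at most $\Lambda$ vertices $v$ with $c(\phi(y)v)=\alpha$, so the set of forbidden images has size up to $\Lambda m=\Theta(n)$; and indeed an averaging argument over the process shows that the expected number of forbidden images really is $\Theta(n)$, of the same order as the candidate set. (For rainbow embeddings each colour's ``budget'' is exactly one; your bookkeeping of ``partial sums $\le(1-\gamma/2)dn^2$'' does not correspond to any constraint that needs to be maintained, and your amortisation sketch does not address why \emph{each individual step} retains a positive fraction of candidates.) This is not a concentration issue that Azuma/Freedman would fix: the \emph{expectation} is already too large. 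The paper explicitly notes (Section~\ref{sec:overview}) that switching/greedy arguments break down once the colouring is no longer $o(n)$-bounded; your proposal does not overcome this.

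The paper's proof follows a genuinely different route. It uses the R\"odl--Ruci\'nski cluster-by-cluster scheme rather than a vertex-by-vertex one: each cluster $X_i$ is embedded in one shot by finding an almost-perfect \emph{rainbow} matching in a super-regular candidacy graph $A_i$ between $X_i$ and $V_i$. The key new idea is to obtain this rainbow matching via a matching in an auxiliary $(t{+}2)$-uniform hypergraph on $X_i\cup V_i\cup C$ whose edges are $e\cup c(e)$ for $e\in E(A_i)$; the boundedness condition~\ref{blow up lemma boundedness} translates \emph{directly} into a bound on the degrees of colour-vertices, so a Pippenger-type pseudorandom matching theorem (Theorem~\ref{thm:hypermatching}) applies and, via carefully chosen weight functions, simultaneously keeps all later candidacy graphs super-regular and their colourings appropriately bounded. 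Before this, two reductions (Lemmas~\ref{lem:colour splitting} and~\ref{lem:Hajnal Szemeredi partitioning}) make the colouring colour-split and the $H$-pairs matchings. Finally, the completion step is not a Hall-type argument over unused colours: instead, a $\gamma$-fraction of the colour classes is \emph{set aside in advance} to form a subgraph $G_B$, and on a random $\mu n$-sized absorber in each cluster the restriction of $c$ to $G_B$ is $o(n)$-bounded, so the earlier rainbow blow-up lemma of~\cite{GJ:18} (Lemma~\ref{lem:blow up matchings}) finishes the embedding.
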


The boundedness condition in~\ref{blow up lemma boundedness} can often be simplified, for instance in the following natural situations: 
if $e_H(X_i,X_j)$ is the same for all pairs $i,j$, then $c$ needs to be $(1-\gamma)e(G[V_1,\dots,V_r])/e(H)$-bounded.
Similarly, if $c$ is `colour-split', that is, $e^\alpha_G(V_i,V_j)\in \Set{e^\alpha(G),0}$, then $c$ needs to be $(1-\gamma)e(G[V_i,V_j])/e(H[X_i,X_j])$-bounded for all $ij\in \binom{[r]}{2}$.
Both conditions are easily seen to be asymptotically best possible.
Condition~\ref{blow up lemma boundedness} is designed to work in the general setting of Lemma~\ref{lem:main}. In the proof of Theorem~\ref{thm:quasirandom}, we will randomly partition $V(G)$ into equal-sized $(V_i)_{i\in[r]}$ and see that~\ref{blow up lemma boundedness} holds.


\section{Applications} \label{sec:apps}

In this section, we discuss applications of our main result to graph decompositions, graph labelling and orthogonal double covers. 
As mentioned before, these applications are inspired by recent work of Montgomery, Pokrovskiy and Sudakov~\cite{MPS:18}, and basically transfer their applications from trees to general, yet bounded degree, graphs.

\subsection*{Graph decompositions}
We briefly explain the general idea of utilizing rainbow edge-colourings to find graph decompositions, and then give two examples.

Suppose $G$ is a graph and $\Gamma$ is a subgroup of the automorphism group $Aut(G)$. 
If for some subgraph $H$ of~$G$, $\Set{\phi(H)}_{\phi\in \Gamma}$ is a collection of edge-disjoint subgraphs of $G$, we call this a \defn{$\Gamma$-generated $H$-packing in~$G$}, and if every edge of $G$ is covered, then it is a \defn{$\Gamma$-generated $H$-decomposition of~$G$}. For instance, in Walecki's theorem, $G$ is the complete graph and $\Gamma$ is generated by one permutation~$\pi$. We say that a packing/decomposition of $K_n$ is \defn{cyclic} if $\Gamma$ is isomorphic to~$\bZ_n$. 
Recall Kotzig's conjecture that for any given tree $T$ with $n$~edges, there exists a cyclic $T$-decomposition of~$K_{2n+1}$. Note that there are two natural divisibility conditions for the existence of such a decomposition, one `global' edge divisibility condition and one `local' degree condition. First, the number of edges of $K_{2n+1}$ is $(2n+1)n$ which is divisible by~$n$. Secondly, every vertex of $K_{2n+1}$ is supposed to play the role of every vertex of $T$ exactly once, thus we need that $\sum_{v\in V(T)}d_T(v)=2n$, which is true by the hand-shaking lemma. However, note that we have not used the fact that $T$ is a tree. The same divisibility conditions hold for any graph with $n$~edges.
We thus propose the following conjecture as an analogue to Kotzig's conjecture for general (bounded degree) graphs.

\begin{conj}
For all $\Delta\in \bN$, there exists $n_0$ such that for all $n\ge n_0$, the following is true. For any graph $H$ with $n$ edges and $\Delta(H)\le \Delta$, there exists a cyclic $H$-decomposition of~$K_{2n+1}$.\COMMENT{Technically, could also assume that $H$ has at most $2n+1$ vertices, but that makes the statement less pretty and follows anyways if we delete some isolated vertices.}
\end{conj}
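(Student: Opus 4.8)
First, the \emph{rotation reduction}: identify $V(K_{2n+1})$ with $\bZ_{2n+1}$ and colour each edge $\{i,j\}$ by the cyclic distance of $i$ and $j$, a value in $[n]$. This colouring is locally $2$-bounded, and each of its $n$ colour classes has exactly $2n+1$ edges, so it is globally $(2n+1)$-bounded. If $H^\ast$ is a rainbow copy of $H$ in this colouring then, since $e(H)=n$, it uses every one of the $n$ colours exactly once; hence the $2n+1$ cyclic shifts $H^\ast,H^\ast+1,\dots,H^\ast+2n$ are pairwise edge-disjoint and, covering $(2n+1)n=\binom{2n+1}{2}$ edges in all, form a cyclic $H$-decomposition of $K_{2n+1}$. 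So it suffices to produce a single rainbow copy of $H$ in the distance colouring --- equivalently, a $\rho$-labelling of $H$ in the sense of Rosa.

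Next, I would try to find this rainbow copy with the machinery behind Lemma~\ref{lem:main}. Randomly partition $\bZ_{2n+1}$ into $r=\Delta+1$ parts $(V_i)_{i\in[r]}$ of near-equal size, and fix a compatible partition $(X_i)_{i\in[r]}$ of $V(H)$ (padded with isolated vertices so that $|V(H)|=2n+1$) into independent sets of the matching sizes, coming from an equitable $r$-colouring of $H$. As $K_{2n+1}$ is complete, each $G[V_i,V_j]$ is complete bipartite and hence $(\eps,1)$-super-regular; and since every colour class has maximum degree~$2$, a routine concentration argument gives whp $e^\alpha_G(V_i,V_j)=(1\pm o(1))\,4n/r^2$ for all colours $\alpha$ and all $ij\in\binom{[r]}{2}$, so that
$$\sum_{ij\in\binom{[r]}{2}}e^\alpha_G(V_i,V_j)\,e_H(X_i,X_j)\;=\;(1\pm o(1))\,\frac{4n}{r^2}\,e(H)\;=\;(1\pm o(1))\Big(\frac{2n}{r}\Big)^{\!2}.$$
This equals the right-hand side of condition~\ref{cond3 main} of Lemma~\ref{lem:main}, read with density $d=1$ and common part size $2n/r$ --- but it lacks the saving factor $(1-\gamma)$ that the lemma demands.

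That mismatch is the main obstacle, and it is why the statement is posed as a conjecture: the distance colouring of $K_{2n+1}$ lies exactly on the threshold of Lemma~\ref{lem:main}, because with only $n$ colours available for $n$ edges a rainbow $H$ must be a system of distinct representatives that exhausts the \emph{whole} colour set, so no slack can be given away. To regain exactness I would aim for a \emph{distributive absorption} scheme in the spirit of Montgomery, Pokrovskiy and Sudakov~\cite{MPS:18}: set aside a small, robustly flexible subgraph $A\subseteq H$ together with a small reserve of vertices and colours; embed $H-A$ rainbow by the blow-up lemma using only the non-reserved colours (where the $\gamma$-slack now genuinely holds); and finally embed $A$ so as to consume \emph{precisely} the colours and vertices left over, whatever they turn out to be. The hard part --- and the place where a new idea seems to be required --- is constructing a colour-flexible absorber \emph{inside the given graph~$H$}: trees carry many leaves and bare paths that automatically supply such flexibility, whereas a general bounded-degree graph with $n$ edges (a union of triangles, a long cycle, a grid, \dots) need not, so the absorbing structure has to be extracted from, or built around, an arbitrary~$H$. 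What the present methods do deliver, by applying Theorem~\ref{thm:quasirandom} to a subgraph $H'\subseteq H$ with $e(H')=\lceil(1-\gamma)n\rceil$ and then rotating, is the approximate version: for every $\gamma>0$ and all large~$n$, $K_{2n+1}$ has a cyclic packing by copies of such an $H'$ that covers all but a $\gamma$-fraction of its edges.
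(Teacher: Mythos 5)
The statement you were asked to prove is Conjecture~2.1 of the paper; the authors pose it as an open problem and offer no proof. Your write-up correctly recognizes this, and the analysis is accurate on every count. The rotation reduction is right: the distance colouring of $K_{2n+1}$ is locally $2$-bounded, each colour class has exactly $2n+1$ edges, a rainbow copy of an $n$-edge $H$ exhausts the colour set and so its $2n+1$ cyclic shifts decompose $K_{2n+1}$, and this is precisely a $\rho$-labelling. You also correctly pin down why Lemma~\ref{lem:main} falls just short: the boundedness $\sum_{ij}e^\alpha_G(V_i,V_j)e_H(X_i,X_j)$ sits at $(1\pm o(1))\,(2n/r)^2$ rather than at most $(1-\gamma)(2n/r)^2$, i.e.\ the colouring is exactly at the extremal threshold where the lemma requires strict slack. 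And the approximate statement you extract by deleting $\gamma n$ edges of $H$ and applying Theorem~\ref{thm:quasirandom} is exactly the content of the paper's Theorem~\ref{thm:decomp K_n}, which is all the evidence the paper itself provides.

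The one thing worth flagging, so you do not oversell the ``path forward'': the distributive-absorption route you sketch is plausible but is genuinely not present in the paper, and your own caveat is the crux --- MPS-style absorbers exploit leaves and bare paths, which a general bounded-degree $H$ (a disjoint union of triangles, a long cycle, a grid) need not provide. So the gap you identify is the real one, and no amount of reshuffling the paper's lemmas closes it; a new absorber construction, or a blow-up lemma with no $\gamma$-slack, would be required. In short: there is no error in your reasoning, but there is no proof either --- which is consistent with the paper, since none exists.
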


We will provide some evidence for this conjecture below (Theorem~\ref{thm:decomp K_n}).
Before, we discuss in a general way how to use rainbow embeddings to find $\Gamma$-generated packings and decompositions. Let $G$ and $\Gamma$ be as above.
Then $\Gamma$ acts on $G$ as a group action and every element $\phi\in \Gamma$ sends vertices onto vertices and edges onto edges.
The \defn{orbit} $\Gamma \cdot e$ of an edge $e$ is defined as $\Gamma \cdot e:=\{\phi(e)\colon \phi\in \Gamma\}$.
It is well-known that two orbits are either disjoint or equal.
Hence we may colour the edges of $G$ according to which orbit they belong to.
We refer to the \defn{orbit colouring} $c_o^\Gamma$ of $G$ induced by $\Gamma$
and define $c_o^\Gamma(e):=\Gamma \cdot e$ for all $e\in E(G)$.

The following simple lemma now asserts that if we can find a rainbow copy with respect to the orbit colouring, and all orbits have maximum size, then the copies of $H$ obtained via $\Gamma$ are pairwise edge-disjoint. The proof is immediate and thus omitted.

\begin{lemma}\label{lem: group action}
	Let $G$ be a graph and let $\Gamma$ be a subgroup of $Aut(G)$ such that $|\Gamma \cdot e|=|\Gamma|$ for all $e\in E(G)$.
	Suppose that $H$ is a rainbow subgraph in $G$ with respect to~$c_o^\Gamma$.
	Then $\Set{\phi(H)}_{\phi\in \Gamma}$ is a $\Gamma$-generated $H$-packing in~$G$.
\end{lemma}
\COMMENT{
\proof
Suppose $H'$ is a rainbow copy of $H$ in $G$ with respect to $c_o^\Gamma$.
We claim that $\{\phi(H')\colon \phi \in \Gamma\}$ is a collection of $|\Gamma|$ edge-disjoint copies of $H$ in $G$.
Observe first that $\phi(H')$ is a copy of $H$ for each $\phi\in \Gamma$ because $\phi\in Aut(G)$.
Suppose that $e\in E(\phi(H')),e'\in E(\phi'(H'))$ for two distinct $\phi,\phi'\in \Gamma$.
We suppose for a contradiction that $e=e'$.
By construction, there exist $f,f'\in E(H')$ with $\phi(f)=e=e'=\phi'(f')$.
Consequently, $\Gamma \cdot f\cap \Gamma \cdot f'\neq \emptyset$ and so $\Gamma \cdot f=\Gamma \cdot f'$. 
Hence, as $H'$ is a rainbow subgraph with respect to $c_o^\Gamma$,
we conclude that $f=f'$.
However, as $|\Gamma \cdot f|=|\Gamma|$, we obtain that $\phi(f)\neq \phi'(f)=\phi'(f')$, which is a contradiction and completes the proof.
\endproof}
In particular, if $|\Gamma|=e(G)/e(H)$, then this yields a $\Gamma$-generated $H$-decomposition of~$G$.

\begin{theorem}\label{thm:decomp K_n}
	For all $\Delta\in \bN$, there exist $\eps>0$ and $n_0\in\mathbb{N}$ such that the following holds for all $n\geq n_0$.
	Suppose $H$ is a graph with $|V(H)|\leq  n$, $\Delta(H)\leq\Delta$ and at most $(1-\eps)n/2$ edges.
	Then $K_n$ contains a cyclic $H$-packing.
\end{theorem}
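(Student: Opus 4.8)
The plan is to derive this from Theorem~\ref{thm:quasirandom} together with Lemma~\ref{lem: group action}, taking the host graph to be essentially $K_n$ and $\Gamma$ the cyclic group of rotations, and checking that the associated orbit colouring satisfies the boundedness hypotheses of Theorem~\ref{thm:quasirandom}. First I would set up the action: identify $V(K_n)$ with $\bZ_n$, let $\pi\colon i\mapsto i+1$ be the rotation, and put $G:=K_n$ if $n$ is odd and $G:=K_n-M$ if $n$ is even, where $M=\{\{i,i+n/2\}\colon 0\le i<n/2\}$ is the $\pi$-invariant matching of antipodal edges. Then $\Gamma:=\langle\pi\rangle$ is a cyclic subgroup of $Aut(G)\le Aut(K_n)$ of order $n$, and every edge of $G$ has an orbit of size exactly $n$: the orbit of an edge of ``distance'' $k$ is the set of all distance-$k$ edges, which has $n$ elements precisely because we have excluded $k=n/2$. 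Hence the hypothesis $|\Gamma\cdot e|=|\Gamma|$ of Lemma~\ref{lem: group action} holds, so it suffices to produce a rainbow copy of $H$ in $G$ with respect to the orbit colouring $c:=c_o^\Gamma$; rotating it then gives the desired cyclic $H$-packing in $K_n$. (Adding isolated vertices to $H$, I may assume $|V(H)|=n$, so that $G$ and $H$ have the same number of vertices.)

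Next I would verify that $c$ fits the hypotheses of Theorem~\ref{thm:quasirandom} for $G$. Each colour class of $c$ consists of all distance-$k$ edges for some $1\le k<n/2$; it has exactly $n$ edges, and each vertex lies on only the two such edges to $v\pm k$, so $c$ is locally $2$-bounded. Since $e(G)\ge\binom{n}{2}-n/2=n(n-2)/2$ and $e(H)\le(1-\eps)n/2$, choosing a constant $\gamma$ with $0<\gamma<\eps$ yields $(1-\gamma)e(G)/e(H)\ge(1-\gamma)\frac{n-2}{1-\eps}\ge n$ once $n$ is large, so $c$ is globally $(1-\gamma)e(G)/e(H)$-bounded. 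Also $G$, being $K_n$ minus at most a matching, is $(\eps_0,1)$-quasirandom for any $\eps_0>0$ once $n$ is large, since degrees and pair densities change by a negligible amount. So, fixing say $\eps:=1/2$ and $\gamma:=1/4$, and letting $\eps_0$ and $n_0$ be obtained from Theorem~\ref{thm:quasirandom} applied with $d=1$, this $\gamma$, the given $\Delta$, and $\Lambda=2$ (and enlarging $n_0$ by an absolute constant), Theorem~\ref{thm:quasirandom} produces a rainbow copy of $H$ in $G$ with respect to $c$, which is exactly what is needed.

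I do not expect a serious obstacle: essentially all the work is inside Theorem~\ref{thm:quasirandom}, and what remains is bookkeeping. The two points to watch are (a) the numerology — the bound $e(H)\le(1-\eps)n/2$ is precisely what forces $\binom{n}{2}/e(H)\gtrsim n$, so that the orbit colouring, whose colour classes all have size $n$, is globally almost-optimally bounded, and this is what ties the slack $\eps$ to the slack $\gamma$ needed in Theorem~\ref{thm:quasirandom}; and (b) the parity nuisance for even $n$, where the antipodal matching must be removed so that every edge-orbit has the full size $n$, as required to invoke Lemma~\ref{lem: group action}. The only mildly delicate check is that deleting this matching preserves quasirandomness, which is immediate.
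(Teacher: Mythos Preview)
Your proposal is correct and follows essentially the same approach as the paper: define $G$ as $K_n$ minus the antipodal matching when $n$ is even, take $\Gamma=\langle\pi\rangle\cong\bZ_n$, observe that the orbit colouring is locally $2$-bounded with all orbits of size~$n$, apply Theorem~\ref{thm:quasirandom} to obtain a rainbow copy of~$H$, and conclude via Lemma~\ref{lem: group action}. Your write-up supplies more explicit detail on the quasirandomness and global boundedness checks than the paper's terse version, but the argument is the same.
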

\proof
Let $G$ be the graph on vertex set $[n]$ that is the complete graph if $n$ is odd and is otherwise obtained from the complete graph by deleting the edges $\{i,i+n/2\}$ for all $i\in [n/2]$.
Consider the subgroup $\Gamma$ of $Aut(G)$ that is generated by the automorphism which sends a vertex $i$ to $i+1$ (modulo $n$).
Clearly, $\Gamma \cong \bZ_{n}$ and hence $|\Gamma|=n$.
In addition, $|\Gamma \cdot e|=n$ for all $e\in E(G)$ and $c_o^\Gamma$ is locally $2$-bounded.
Therefore, Theorem~\ref{thm:quasirandom} yields a rainbow copy of $H$ with respect to $c_o^\Gamma$ in $G$, which by Lemma~\ref{lem: group action} yields a cyclic $H$-packing in $G\In K_n$.
\endproof

We can also deduce a partite version of this. For simplicity, we only consider the bipartite case.

\begin{theorem}\label{thm:decomp multi}
	For all $\Delta\in \bN$, there exist $\eps>0$ and $n_0\in\mathbb{N}$ such that the following holds for all $n\geq n_0$.
	Suppose $H$ is a graph with $\Delta(H)\leq\Delta$ and at most $(1-\eps)n$ edges, and $V(H)$ is partitioned into $2$ independent sets of size~$n$.
	Then the complete bipartite graph $K_{n,n}$ contains a $\bZ_n$-generated $H$-packing.
\end{theorem}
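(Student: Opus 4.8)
The plan is to follow the proof of Theorem~\ref{thm:decomp K_n}, except that $K_{n,n}$ has independent sets of linear size and so is not $(\eps,d)$-quasirandom; hence Theorem~\ref{thm:quasirandom} does not apply and we invoke the rainbow blow-up lemma, Lemma~\ref{lem:main}, directly with $r=2$. Concretely, set $G:=K_{n,n}$ with vertex classes $V_1=\{a_0,\dots,a_{n-1}\}$ and $V_2=\{b_0,\dots,b_{n-1}\}$, and let $\Gamma\le Aut(G)$ be generated by the automorphism $\sigma$ with $\sigma(a_i)=a_{i+1}$, $\sigma(b_i)=b_{i+1}$, indices modulo $n$; then $\Gamma\cong\bZ_n$ and $|\Gamma|=n$. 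The orbit of an edge $\{a_i,b_j\}$ is $\{\{a_{i+k},b_{j+k}\}:k\in\bZ_n\}$, which depends only on the residue $j-i$ modulo $n$ and so has exactly $n$ elements; thus $|\Gamma\cdot e|=|\Gamma|$ for every $e\in E(G)$, making Lemma~\ref{lem: group action} applicable, and the orbit colouring $c_o^\Gamma$ assigns $\{a_i,b_j\}$ the colour $j-i\bmod n$. In particular there are exactly $n$ colours, each colour class is a perfect matching of $G$ (so $e^\alpha(G)=n$ for every colour $\alpha$), and $c_o^\Gamma$ is locally $1$-bounded, since the $n$ edges at any vertex receive $n$ distinct colours.

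It then remains to verify the hypotheses of Lemma~\ref{lem:main} for the blow-up instance $(H,G,(X_1,X_2),(V_1,V_2))$, where $X_1,X_2$ are the two prescribed independent sets of $H$. We have $|X_i|=|V_i|=n$, and $G[V_1,V_2]=K_{n,n}$ is $(\eps',1)$-super-regular for every $\eps'>0$ (all densities equal $1$ and all degrees equal $|V_{3-i}|$), so this is an $(\eps',1)$-super-regular blow-up instance; condition~(i) is the assumption $\Delta(H)\le\Delta$, condition~(ii) holds since $|V_i|=n$, and $c_o^\Gamma$ is locally $\Lambda$-bounded with $\Lambda=1$. For the boundedness condition~\ref{blow up lemma boundedness}, the sum over $\binom{[2]}{2}$ has the single term $e^\alpha_G(V_1,V_2)\,e_H(X_1,X_2)=n\cdot e(H)$; taking $\eps$ equal to the constant $\gamma$ supplied to Lemma~\ref{lem:main}, the hypothesis $e(H)\le(1-\eps)n$ gives $n\cdot e(H)\le(1-\gamma)n^2=(1-\gamma)dn^2$ with $d=1$, as required. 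Hence Lemma~\ref{lem:main}, applied with $d:=1$, this $\gamma$, $\Delta$, $\Lambda:=1$, $r:=2$, produces a rainbow embedding $\phi$ of $H$ into $G$ with $\phi(X_i)\subseteq V_i$, so $\phi(H)$ is a rainbow copy of $H$ in $G$ with respect to $c_o^\Gamma$. Finally, Lemma~\ref{lem: group action} turns this into a $\Gamma$-generated, hence $\bZ_n$-generated, $H$-packing in $K_{n,n}$.

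Since $G[V_1,V_2]$ is super-regular with exact parameters, there is no analytic content here and the argument is essentially bookkeeping. The two points that need care are: checking that every edge orbit has full size $n$, which is what makes Lemma~\ref{lem: group action} applicable and pins every colour class down to exactly $n$ edges, so that condition~\ref{blow up lemma boundedness} is tight up to the $(1-\gamma)$ slack; and keeping the order of the quantifiers straight, so that $e(H)\le(1-\eps)n$ genuinely implies condition~\ref{blow up lemma boundedness}. One can push $\eps$ as close to $0$ as desired, moving the statement towards the best possible bound $e(H)=(1-o(1))n$, by choosing the constant $\gamma$ in Lemma~\ref{lem:main} correspondingly small, at the cost of a larger $n_0$.
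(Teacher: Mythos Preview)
Your proof is correct and takes essentially the same approach as the paper's own proof: set up the cyclic $\bZ_n$-action on $K_{n,n}$, observe that the orbit colouring is proper with full-size orbits, apply Lemma~\ref{lem:main} with $r=2$ and $d=1$ to obtain a rainbow copy of $H$, and finish with Lemma~\ref{lem: group action}. The paper's version is terser, leaving the verification of condition~\ref{blow up lemma boundedness} implicit, whereas you spell it out; but the argument is the same.
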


\proof
We proceed similarly as in Theorem~\ref{thm:decomp K_n}.
Let $K_{n,n}$ have vertex set $\set{(1,i),(2,i)}{i\in[n]}$ and edge set $\set{(1,i)(2,j)}{i,j\in[n]}$. 
Consider the subgroup $\Gamma$ of $Aut(G)$ that is generated by the automorphism which sends a vertex $(\ell,i)$ to $(\ell,i+1)$ (modulo $n$ in the second coordinate), for $\ell\in[2]$. 
Consequently, $\Gamma\cong \bZ_{n}$. 
Moreover, $|\Gamma \cdot e|=n$ for all $e\in E(K_{n,n})$ and $c_o^\Gamma$ is proper. Thus, Lemma~\ref{lem:main} yields a rainbow copy of $H$ in $K_{n,n}$ with respect to $c_o^\Gamma$.
Then Lemma~\ref{lem: group action} completes the proof.
\endproof

These results demonstrate the usefulness of rainbow embeddings to decomposition problems. Clearly, the application is limited to decompositions of a host graph into copies of the same graph~$H$. 
Approximate decomposition results which do not arise from a group action but from random procedures have been studied recently in great depth.
At the expense that one does not obtain very symmetric (approximate) decompositions,
it is possible to embed different graphs and not only many copies of a single graph.
In particular, the blow-up lemma for approximate decompositions by Kim, K\"uhn, Osthus and Tyomkyn~\cite{KKOT:19} yields approximate decompositions into bounded degree graphs of quasirandom multipartite graphs.
Both this and another recent result of Allen, B\"ottcher, Hladk\'y and Piguet~\cite{ABHP:17} imply Conjecture 2.1 asymptotically for non-cyclic decompositions.

\subsection*{Orthogonal double covers}

An \defn{orthogonal double cover} of $K_n$ by some graph $F$ is a collection of $n$ copies of $F$ in $K_n$ such that every edge of $K_n$ is contained in exactly two copies, and each two copies have exactly one edge in common. Note that $F$ must have exactly $n-1$ edges. For instance, an orthogonal double cover of $K_{\binom{k}{2}+1}$ by~$K_{k}$ is equivalent to a \defn{biplane}, which is, roughly speaking, the orthogonal double cover version of a finite projective plane. Only a handful of such biplanes is known and it is a major open question whether there are infinitely many.

Another natural candidate for $F$ is a spanning tree. Gronau, Mullin, Rosa conjectured the following.

\begin{conj}[Gronau, Mullin, Rosa~\cite{GMR:97}]\label{conj:ODC}
	Let $T$ be an arbitrary tree with $n$ vertices, $n \geq  2$, where $T$ is not the path of length $3$. 
	Then there exists an orthogonal double cover of $K_n$ by~$T$.
\end{conj}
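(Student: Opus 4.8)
The plan is to reduce the conjecture to finding a single rainbow copy of~$T$ in $K_n$ with respect to a suitable edge-colouring --- in the spirit of \Cref{lem: group action} and the decomposition applications above --- and then to obtain that copy from \Cref{thm:quasirandom}. The key point is that a group action can make \emph{both} defining properties of an orthogonal double cover automatic once the base copy is rainbow. To illustrate the mechanism, suppose first that $n=2^k$ and identify $V(K_n)$ with $\bZ_2^k$; colouring each edge $\{u,v\}$ by $u+v\in\bZ_2^k\setminus\{0\}$ yields the canonical proper $1$-factorization of $K_n$, which is precisely the orbit colouring $c_o^\Gamma$ for $\Gamma\cong\bZ_2^k$ acting by translations. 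If $T'$ is a rainbow copy of $T$ in $K_n$ with respect to this colouring, then $T'$ uses each of the $n-1$ colours exactly once, and the $n$ translates $\{\phi(T')\}_{\phi\in\Gamma}$ form an orthogonal double cover of $K_n$ by~$T$: for any edge $\{u,v\}$, the condition $\{u,v\}\in E(\phi(T'))$ says that $\{u-\phi,v-\phi\}$ is the unique edge of~$T'$ of colour $u+v$, which has exactly two solutions~$\phi$, so we get a double cover; and for $\rho\in\Gamma\setminus\{0\}$ the edges of $E(T')\cap E(\rho(T'))$ are exactly those edges of $T'$ of colour~$\rho$, of which there is exactly one, so any two of the copies meet in a single edge. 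As a sanity check, this reduction also recovers the exceptional case: for $n=4$ a rainbow path on four vertices $a,b,c,d$ would require $(a+b)+(b+c)+(c+d)=a+d$ to equal the sum $00$ of the three nonzero colours, forcing $a=d$; hence no rainbow $P_4$, and indeed no orthogonal double cover of $K_4$ by $P_4$, exists.

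Granting this reduction, the remaining task is to produce a rainbow copy of~$T$ in $K_n$ equipped with such a colouring. For general~$n$ one first needs a substitute for~$\bZ_2^k$: either a group of order~$n$ acting on $V(K_n)$ in which every pair of vertices is interchanged by a unique involution, or --- when none exists, for instance when $n$ is odd --- an analogous ``self-orthogonal'' edge-colouring of $K_n$ together with its family of $n$ copies, coming from a suitable starter or near-$1$-factorization. In either case the colouring is locally $\O(1)$-bounded and globally exactly $\binom{n}{2}/e(T)=n/2$-bounded, so \Cref{thm:quasirandom} is the natural engine.

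The main obstacle will be the two hypotheses of \Cref{thm:quasirandom} (equivalently \Cref{lem:main}) that the conjecture violates. First, \emph{threshold versus slack}: the theorem requires a $(1-\gamma)\binom{n}{2}/e(H)$-bounded colouring for some fixed $\gamma>0$, whereas the orbit colouring sits exactly at $\binom{n}{2}/e(T)$, and deleting colour classes to create slack would destroy the double cover. One would therefore have to run the augmentation/absorption argument underpinning \Cref{lem:main} directly at the threshold, or couple it with an exact completion step that finishes the last few colours by hand --- which appears to require new ideas, since the present proof genuinely consumes the $\gamma$-slack. Second, \emph{unbounded degree}: a spanning tree need not satisfy $\Delta(T)=\O(1)$, while the rainbow blow-up lemma embeds only bounded-degree graphs. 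For trees with few high-degree vertices (stars, spiders, brooms, caterpillars with high-degree spine) one would embed the low-degree part via \Cref{lem:main} and absorb the high-degree part using the near-$1$-factorization structure of $K_n$; many such families already admit explicit orthogonal double covers, so the genuinely new range for the methods of this paper is that of bounded-degree trees. Finally, the excluded~$P_4$ and finitely many small~$n$ must be dealt with separately, as is routine for results of this type.
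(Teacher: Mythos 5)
This statement is a \emph{conjecture}, attributed to Gronau, Mullin and Rosa, and the paper neither proves it nor claims to: it is quoted purely as motivation for the approximate result Theorem~\ref{thm:odc}, whose proof (omitted in the paper, but appearing in a commented-out block and in~\cite{MPS:18}) applies only to $n=2^k$, bounded-degree~$H$, and $e(H)\le(1-\eps)n$. You have correctly recognised all of this, and your writeup is a candid account of the gap between the paper's tools and the actual conjecture rather than a purported proof, which is the right stance.

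Your reduction for $n=2^k$ via the $\bZ_2^k$ translation action is exactly the mechanism the paper (and~\cite{MPS:18}) uses for Theorem~\ref{thm:odc}, and your verification of the two ODC axioms from rainbowness is correct, including the check that a rainbow $P_4$ cannot exist in the orbit colouring of $K_4$. You also correctly identify the two structural obstacles to upgrading Theorem~\ref{thm:quasirandom} to the full conjecture: the colouring sits exactly at the $n/2$ threshold while the theorem needs a multiplicative $(1-\gamma)$ slack that the absorption argument genuinely consumes; and spanning trees may have unbounded degree, which the rainbow blow-up lemma cannot handle. One small point worth flagging: for general~$n$, the ``self-orthogonal starter / near-$1$-factorization'' device you sketch would give a family of $n$ copies pairwise sharing at most one edge, but making the sharing \emph{exactly} one and the cover \emph{exactly} double requires a rainbow copy using every colour class, which is again an exact (no-slack) statement; so the threshold issue blocks both axioms, not only the double-cover count. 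In short, your proposal is not a proof and could not be one with the paper's tools, but it accurately mirrors the reduction the paper uses for the approximate Theorem~\ref{thm:odc} and honestly diagnoses why the conjecture remains out of reach.
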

Montgomery, Pokrovskiy and Sudakov~\cite{MPS:18} proved an asymptotic version of this when $n$ is a power of~$2$, using their Theorem~\ref{thm:MPS}. Similarly, our main theorem yields approximate orthogonal double covers by copies of any bounded degree graph with $(1-o(1))n$ edges whenever $n$ is a power of $2$. We omit the proof as it is verbatim the same as in~\cite{MPS:18}.

\begin{theorem}\label{thm:odc}
	For all $\Delta\in \bN$, there exist $\eps>0$ and $n_0\in\mathbb{N}$ such that the following holds for all $n\geq n_0$ with $n=2^k$ for some $k\in\mathbb{N}$.
	Suppose $H$ is a graph with $|V(H)|\leq  n$, $\Delta(H)\leq\Delta$ and at most $(1-\eps)n$ edges.
	Then the complete graph $K_n$ contains $n$ copies of $H$ such that every edge of $K_n$ belongs to at most two copies, and any two copies have at most one edge in common.
\end{theorem}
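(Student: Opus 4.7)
The plan is to turn the orthogonal double cover problem into a rainbow embedding problem via a group action, exactly as Montgomery, Pokrovskiy and Sudakov do for trees. Since $n=2^k$, identify $V(K_n)$ with the elementary abelian $2$-group $\mathbb{F}_2^k$ and define the edge-colouring $c\colon E(K_n)\to \mathbb{F}_2^k\setminus\{0\}$ by $c(\{x,y\}):=x+y$. Each colour class is then a perfect matching (so $c$ is a proper edge-colouring, hence locally $1$-bounded), and there are exactly $n-1$ colour classes. Thus the global boundedness of $c$ is $n/2$, and since $e(H)\le (1-\eps)n$, the ratio $(1-\gamma)\binom{n}{2}/e(H)$ exceeds $n/2$ once $\eps$ is small enough with respect to $\gamma$; the hypotheses of Theorem~\ref{thm:quasirandom} (with $d=1$ and quasirandomness of $K_n$) are therefore satisfied, and we obtain a rainbow copy $H_0$ of $H$ in $K_n$.

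Next I would let $\mathbb{F}_2^k$ act on $K_n$ by translation and consider the $n$ translates $\{H_0+g : g\in\mathbb{F}_2^k\}$. The key point is that translation preserves colour: if $f=\{a,b\}$ has colour $a+b$, then $f+g=\{a+g,b+g\}$ has colour $(a+g)+(b+g)=a+b$. Because every edge is fixed by a stabilizer of size $2$ (the identity and its own colour), Lemma~\ref{lem: group action} does not apply verbatim, but the analogous counting argument works in its place. For any edge $e=\{x,y\}\in E(K_n)$, $e\in H_0+g$ iff $e+g\in H_0$; both possible preimages of $e$ in the colour class $x+y$ (namely $\{x+g,y+g\}$ for $g\in\{x+a,x+b\}$ if $\{a,b\}$ is the unique edge of $H_0$ of that colour) give the same edge of $H_0$ since $H_0$ is rainbow. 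Hence each edge of $K_n$ lies in at most two translates.

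For the second condition, fix distinct translates $H_0+g$ and $H_0+g'$ and suppose they share an edge $e=\{x,y\}$. Then both $e+g$ and $e+g'$ are edges of $H_0$ with the common colour $x+y$. Since $H_0$ is rainbow, $e+g$ and $e+g'$ must be the same edge of $H_0$, which forces $g+g'=x+y$; so any shared edge must have colour $g+g'$, and its preimage is the unique edge of $H_0$ of colour $g+g'$ (if any). This shows any two translates share at most one edge, completing the construction of the $n$ copies.

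The main obstacle is really just checking that the boundedness condition of Theorem~\ref{thm:quasirandom} is met on the nose: one needs to ensure $(1-\gamma)\binom{n}{2}/e(H)\ge n/2$, which holds when $\gamma\le \eps$, so one chooses the parameters in Theorem~\ref{thm:quasirandom} accordingly (taking $\Lambda=1$, $d=1$, $\Delta(H)\le \Delta$, and some $\gamma=\gamma(\eps)$). Everything else — the identification with $\mathbb{F}_2^k$, the stabilizer count, and the verification that translation preserves colour — is essentially combinatorial bookkeeping identical to the argument in~\cite{MPS:18}, which is why the authors declare it verbatim.
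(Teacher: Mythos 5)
Your proof is correct and follows exactly the approach the paper attributes to Montgomery, Pokrovskiy and Sudakov (the paper omits the proof precisely because it is ``verbatim the same as in \cite{MPS:18}''). You identify $V(K_n)$ with $\mathbb{F}_2^k$, use the sum colouring $c(\{x,y\})=x+y$, find a rainbow copy $H_0$ via the main theorem, translate by all group elements, and then check the two overlap conditions directly by exploiting that translation preserves colour and that $H_0$ is rainbow. Your observation that Lemma~\ref{lem: group action} does not apply verbatim here (each edge has a stabilizer of order $2$ under translation, so orbits have size $n/2$ rather than $|\Gamma|=n$) and your replacement counting argument are both right, and match what the paper does implicitly.

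One small slip: midway you write that the required bound ``exceeds $n/2$ once $\eps$ is small enough with respect to $\gamma$''. This is backwards --- shrinking $\eps$ allows $e(H)$ to grow, which makes the inequality $(1-\gamma)\binom{n}{2}/e(H)\ge n/2$ harder, not easier. What is needed is $\gamma$ strictly smaller than $\eps$ (equivalently, $\eps$ not too small relative to $\gamma$), and for $n$ sufficiently large; the inequality $n\eps-\gamma(n-1)\ge 1$ then holds. You do state the correct condition ``$\gamma\le\eps$'' (more precisely one should take $\gamma<\eps$) in your closing sentence, so this is a local inconsistency in exposition rather than a gap in the argument.
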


\COMMENT{\proof
We identify $V(K_n)$ with the elements of the group $\mathbb{Z}_2^k$. 
We consider the following edge-colouring $c\colon E(K_n)\to\mathbb{Z}_2^k$ with $c(ij)=i+j\in\mathbb{Z}_2^k$. This is proper and thus $n/2$-bounded.
It holds that 
$$e^\alpha(K_n)\frac{e(H)}{e(K_n)}\leq 1-\eps,$$
for every colour $\alpha\in\mathbb{Z}_2^k$.
Hence, by Theorem~\ref{thm:main simplified}, $K_n$ contains a rainbow copy $H_0$ of $H$.
For any $z\in\mathbb{Z}_2^k$, 
we define a permutation $\phi_z\colon V(K_n)\to V(K_n)$ by $\phi_z(v)=v+z$ with addition in~$\mathbb{Z}_2^k$.
Let $\phi_z(H_0)$ denote the subgraph of $K_n$ defined by 
$$\phi_z(H_0):=(\{\phi_z(v)\colon v\in V(H_0) \},\{\phi_z(u)\phi_z(v)\colon uv\in E(H_0)\}).$$
Note that $\phi_z(H_0)$ is isomorphic to $H_0$ since $\phi_z$ is a permutation of $V(K_n)$.
We claim that the collection of $n$ graphs $\cH:=\{\phi_z(H_0)\colon z\in\mathbb{Z}_2^k \}$ satisfies the theorem.
Note that $\phi_z$ preserves the colouring of the edges because $\phi_z(u)+\phi_z(v)=u+z+v+z=u+v$. 
Hence, all graphs in $\cH$ are rainbow.
Note that $\phi_z$ only keeps the edges fixed that are coloured with $z$ because $\{\phi_z(v),\phi_z(v+z) \}=\{v+z,v\}$ for any $v\in V(K_n)$.
Furthermore, $\phi_{z-z'}\circ\phi_{z'}=\phi_z$.
Combining these, we obtain that if a $z$-coloured edge belongs to $\phi_x(H_0)$ and $\phi_y(H_0)$ then $x+y=z\in\mathbb{Z}_2^k$.
Hence, every edge of $K_n$ belongs to at most two copies of $\cH$, and any two copies of $\cH$ have at most one edge in common.
\endproof}

\subsection*{Graph labellings}
The study of graph labellings began in the 1960s and has since produced a vast amount of different concepts, results and applications (see e.g. the survey~\cite{gallian:98}).
Perhaps the most popular types of labellings are graceful labellings and harmonious labellings. The former were introduced by Rosa~\cite{rosa:67} in~1967. 
Given a graph $H$ with $q$ edges, a \defn{graceful labelling of~$H$} is an injection $f\colon V(H)\to [q+1]$ such that the induced edge labels $|f(x)-f(y)|$, $xy\in E(H)$, are pairwise distinct, and $H$ is \defn{graceful} if such a labelling exists. The Graceful tree conjecture asserts that all trees are graceful. Rosa~\cite{rosa:67} showed that this would imply the aforementioned Ringel--Kotzig conjecture. 
Despite extensive research, this conjecture remains wide open. Adamaszek, Allen, Grosu and Hladk\'y~\cite{AAGH:16} recently proved that almost all trees are almost graceful. 

Harmonious labellings were introduced by Graham and Sloane~\cite{GS:80} in~1980. Given a graph~$H$ and an abelian group~$\Gamma$, a \defn{$\Gamma$-harmonious labelling of~$H$} is an injective map $f\colon V(H)\to \Gamma$ such that the induced edge labels $f(x)+f(y)$, $xy\in E(H)$, are pairwise distinct, and $H$ is \defn{$\Gamma$-harmonious} if such a labelling exists. Graham and Sloane asked which graphs $H$ are $\bZ_{e(H)}$-harmonious. Note that this necessitates that $|V(H)|\le e(H)$. 
In the special case when $H$ is a tree on $n$~vertices, they conjectured that there exists an injective map $f\colon V(H)\to [n]$ such that the induced edge labels $f(x)+f(y)$, $xy\in E(H)$, are pairwise distinct modulo~$n-1$. 
\.{Z}ak~\cite{zak:09} proposed a weakening of this. He conjectured that every tree on $n-o(n)$ vertices is $\bZ_{n}$-harmonious. Montgomery, Pokrovskiy and Sudakov~\cite{MPS:18} proved \.{Z}ak's conjecture as a corollary of Theorem~\ref{thm:MPS}. Using our Theorem~\ref{thm:main simplified}, we can deduce a similar statement for general bounded degree graphs.

\begin{theorem}\label{thm:harmonious labellings}
	For all $\Delta\in \bN$, there exist $\eps>0$ and $n_0\in\mathbb{N}$ such that the following holds for all $n\geq n_0$.
	Suppose $H$ is a graph with at most $n$~vertices, at most $(1-\eps )n$ edges and $\Delta(H)\leq\Delta$. Let $\Gamma$ be an abelian group of order~$n$.
	Then $H$ is $\Gamma$-harmonious.
\end{theorem}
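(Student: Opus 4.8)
\lateproof{Theorem~\ref{thm:harmonious labellings}}
The plan is to encode a $\Gamma$-harmonious labelling of $H$ as a rainbow embedding of $H$ into a suitably edge-coloured complete graph and then to apply Theorem~\ref{thm:quasirandom} (equivalently, the special case Theorem~\ref{thm:main simplified}). First I would dispose of trivialities: if $e(H)=0$ then any injection $V(H)\to\Gamma$ is vacuously a $\Gamma$-harmonious labelling (note $|V(H)|\le n=|\Gamma|$), so I may assume $e(H)\ge 1$; and by adding isolated vertices I may assume $|V(H)|=n$ without changing $e(H)$ or $\Delta(H)$.

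Next I would set up the colouring. Identify $V(K_n)$ with the ground set of $\Gamma$ and define $c\colon E(K_n)\to\Gamma$ by $c(\{g,h\}):=g+h$; since $\Gamma$ is abelian this is well defined on unordered pairs. For each $g\in\Gamma$ and each colour $\alpha\in\Gamma$ there is exactly one $h$ with $g+h=\alpha$, so $c$ is proper --- in particular locally $1$-bounded --- and every colour class is contained in a perfect matching, whence $e^\alpha(K_n)\le n/2$ for all $\alpha\in\Gamma$; that is, $c$ is globally $(n/2)$-bounded.

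Then I would invoke the rainbow blow-up lemma. Taking $\eps\in(0,1)$ and setting $\gamma:=\eps/4$, I would feed $d:=1$, this $\gamma$, $\Delta$ and $\Lambda:=1$ into Theorem~\ref{thm:quasirandom} to obtain $\eps'>0$ and a threshold; enlarging it to $n_0$ I ensure both $n_0\ge 1/\eps'$ (so that $K_n$ is $(\eps',1)$-quasirandom) and $n_0\ge 4/\eps$. Since $e(H)\le(1-\eps)n$, for every $n\ge n_0$ we then have
\[
\frac{n}{2}\;\le\;(1-\gamma)\,\frac{\binom n2}{(1-\eps)n}\;\le\;(1-\gamma)\,\frac{e(K_n)}{e(H)},
\]
where the first inequality is equivalent to $(1-\gamma)\le (3\eps/4)n$ and the second uses $e(K_n)=\binom n2$ and $e(H)\le(1-\eps)n$. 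Hence $c$ satisfies the global boundedness hypothesis of Theorem~\ref{thm:quasirandom}, which produces a rainbow embedding $\phi\colon H\to K_n$ with respect to~$c$. Finally, setting $f:=\phi$ gives an injection $V(H)\to\Gamma$ whose induced edge labels $f(x)+f(y)=c(\phi(x)\phi(y))$ are pairwise distinct because $\phi(H)$ is rainbow; thus $f$ is a $\Gamma$-harmonious labelling of~$H$.

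Given Theorem~\ref{thm:quasirandom}, the argument is short, and there is no real obstacle: the only points requiring care are the choice of the Cayley-type colouring $c$ together with the verification of its local and global boundedness, and the bookkeeping that turns the edge bound $e(H)\le(1-\eps)n$ into the global boundedness condition of Theorem~\ref{thm:quasirandom} with an admissible $\gamma$ (which is what forces $n_0$ to depend on $\eps$). This mirrors exactly how Montgomery, Pokrovskiy and Sudakov~\cite{MPS:18} deduce \.{Z}ak's conjecture from Theorem~\ref{thm:MPS}.
\endproof
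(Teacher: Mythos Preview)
Your proof is correct and follows essentially the same approach as the paper: define the Cayley-type colouring $c(\{g,h\})=g+h$ on $K_\Gamma$, observe that it is proper and hence locally $1$-bounded and globally $n/2$-bounded, and then apply Theorem~\ref{thm:quasirandom} (the paper invokes the equivalent Theorem~\ref{thm:main simplified}) to obtain a rainbow copy of~$H$, which is precisely a $\Gamma$-harmonious labelling. Your write-up is simply more explicit about the quantifier bookkeeping and the verification of the global boundedness inequality than the paper's terse two-line argument.
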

\proof
Consider the complete graph $K_\Gamma$ on~$\Gamma$. Define the edge-colouring $c\colon E(K_\Gamma)\to \Gamma$ by setting $c(ij)=i+j$, and note that $c$ is proper and thus $n/2$-bounded.\COMMENT{$n/2\le (1-2\eps)\frac{\binom{n}{2}}{e(H)}$}
Hence, by Theorem~\ref{thm:main simplified}, $K_{\Gamma}$ contains a rainbow copy of~$H$, which corresponds to a $\Gamma$-harmonious labelling of~$H$.
\endproof

\section{Proof overview}\label{sec:overview}

In the literature, there are two common approaches for proving blow-up lemmas. The original approach of Koml\'os, S\'ark\"ozy and Szemer\'edi consists of a randomised sequential embedding algorithm, which embeds the bulk of the vertices one-by-one, choosing each time a random image from all available ones. This strategy has also been used in~\cite{ABHKP:16,BKTW:15,csaba:07,keevash:11}. 

Shortly after the appearance of the blow-up lemma, R\"odl and Ruci\'nski~\cite{RR:99} developed an alternative proof, where instead of embedding vertices one-by-one, the algorithm consists of only a constant number of steps. In the $i$th step, the whole cluster $X_i$ is embedded into~$V_i$. The desired bijection is obtained as a perfect matching within a `candidacy graph'~$A_i$, which is an auxiliary bipartite graph between $X_i$ and $V_i$ where $xv\in E(A_i)$ only if $v$ is still a suitable image for~$x$. Although these candidacy graphs (of clusters not yet embedded) become sparser after each step, R\"odl and Ruci\'nski were able to show that one can maintain their super-regularity throughout the procedure. 
This approach was also employed in~\cite{KKOT:19} to prove a blow-up lemma for approximate decompositions, and in~\cite{GJ:18} to prove a rainbow blow-up lemma for $o(n)$-bounded colourings, and also underpins our proof here.

For simplicity, we consider here the following setup. Suppose $V(H)$ is partitioned into independent sets $X_1,X_2,X_3$ of size $n$ and $H$ consists of a perfect matching between $X_1$ and $X_2$, and a perfect matching between $X_2$ and~$X_3$.

 Suppose that we have already found an embedding $\phi_1\colon X_1\to V_1$, and next we want to embed $X_2$ into~$V_2$. We define the bipartite graph $A_2$ between $X_2$ and $V_2$ by adding the edge $xv$ if $\phi_1(y)v\in E(G)$, where $y$ is the $H$-neighbour of $x$ in~$X_1$. Now, the aim is to find a perfect matching $\sigma$ in~$A_2$. Note that any such perfect matching yields a valid embedding of $H[X_1,X_2]$ into $G[V_1,V_2]$. Moreover, if we aim to find a rainbow embedding, this can be achieved as follows. For each $xv\in E(A_2)$, we colour $xv$ with the colour of $\phi_1(y)v$. Observe that if $\sigma$ is rainbow, then the embedding of $H[X_1,X_2]$ into $G[V_1,V_2]$ will be rainbow, too. Let us assume that $A_2$ is super-regular. It is well known that $A_2$ then has a perfect matching. One key ingredient in~\cite{GJ:18} was to combine this fact with a recent result of Coulson and Perarnau~\cite{CP:17}, based on the switching method, to even find a rainbow perfect matching. Unfortunately, the switching method relies upon the fact that the given colouring is $o(n)$-bounded, and is thus not applicable in the present setting. 
There are two key insights that will allow us to deal with almost optimally bounded colourings.

First, note that given a proper colouring of a graph~$G$, if we take a random subset $U$ of size $\mu |G|$, then with high probability, the colouring induced on $U$ will be $(1+o(1))\mu |U|$-bounded, and thus the rainbow blow-up lemma from~\cite{GJ:18} is applicable (on $U$). This gives hope to combine this with an `approximate result' on $V(G)\sm U$ to obtain the desired embedding. Such a combination of techniques has already been successfully used in~\cite{KKKO:18}.
In our simplified discussion, let us thus assume we do not need to find a perfect rainbow matching~$\sigma$, but would be content if $\sigma$ is almost perfect.

This leads us to the second main ingredient of our proof---matchings in hypergraphs. Given our candidacy graph $A_2$ and its (auxiliary) colouring $c_2\colon E(A_2)\to C_2$, we define a hypergraph~$\cH$ on $X_2\cup V_2 \cup C_2$ where for every edge $e\in E(A_2)$, we add the hyperedge $e\cup \Set{c(e)}$ to~$\cH$. A simple but crucial observation is that there is a one-to-one correspondence between matchings in $\cH$ and rainbow matchings in $A_2$. In particular, a matching $\cM$ in $\cH$ that covers almost all vertices of $X_2\cup V_2$ would translate into our desired almost perfect rainbow matching $\sigma$ in~$A_2$. Here, we can make use of the rich theory of matchings in hypergraphs with small codegrees, which we will discuss in more detail in Section~\ref{sec:hypermatchings}. At this point, we remark that since $A_2$ is super-regular, all vertices of $X_2\cup V_2$ have roughly the same degree in~$\cH$, and if the degrees of the colours are not larger (that is, the colouring is appropriately bounded), this will suffice to find the desired matching in~$\cH$.

Moreover, note that we assumed that $A_2$ is super-regular and its colouring is appropriately bounded. After embedding $X_2$ according to~$\sigma$, we have to \emph{update} the candidacy graph~$A_3$ as we updated $A_2$ after embedding~$X_1$. Of course, whether $A_3$ will be super-regular and its colouring appropriately bounded depends heavily on~$\sigma$. For the embedding not to get stuck, we need to find in $A_2$ not just \emph{any} $\sigma$, but a \emph{good} one.
To achieve this, we make use of a general hypergraph matching theorem (Theorem~\ref{thm:hypermatching}) proved recently by the authors which guarantees a matching~$\cM$ in $\cH$ that is in many ways `random-like'. This will allow us to find an embedding~$\sigma$ for which the updated candidacy graph $A_3$ will have the desired properties.

\section{Preliminaries}\label{sec:preliminaries}

\subsection{Notation}
For $k\in\mathbb{N}$, we write $[k]_0:=[k]\cup\{0\}=\{0,1,\ldots,k\}$, where $[0]=\emptyset$.
For a finite set $S$ and $i\in \mathbb{N}$,
we write $\binom{S}{i}$ for the set of all subsets of $S$ of size $i$
and $2^S$ for the powerset of $S$. 
For a set $\Set{i,j}$, we sometimes simply write $ij$.
For $a,b,c\in \mathbb{R}$,
we write $a=b\pm c$ whenever $a\in [b-c,b+c]$.
For $a,b,c\in (0,1]$,
we sometimes write $a\ll b \ll c$ in our statements meaning that there are increasing functions $f,g:(0,1]\to (0,1]$
such that whenever $a\leq f(b)$ and $b \leq g(c)$,
then the subsequent result holds.

We only consider finite, simple and undirected graphs.
For a graph~$G$, we let~$V(G)$ and~$E(G)$ denote the vertex set and edge set, respectively. 
We say $u\in V(G)$ is a $G$-neighbour of $v\in V(G)$ if $uv\in E(G)$.
As usual, $\Delta(G)$ denotes the maximum degree of~$G$.
For $u,v\in V(G)$, let $N_G(u,v):=N_G(u)\cap N_G(v)$ denote the common neighbourhood of $u$ and~$v$. For a set $S\In V(G)$, let $N(S):=\bigcup_{v\in S}N_G(v)$.
For disjoint subsets $A,B\In V(G)$, 
let $G[A,B]$ denote the bipartite subgraph of $G$ between $A$ and $B$
and $G[A]$ the subgraph in $G$ induced by $A$.
Let $e(G)$ be the number of edges of $G$ and let $e_G(A,B)$ denote the number of edges of $G[A,B]$.
We let $G^2$ denote the square of $G$, that is, the graph obtained from~$G$ by adding edges between vertices which have a common neighbour in~$G$.
A subset $X\In V(G)$ is $2$-independent if it is independent in~$G^2$.

Let $G$ be a graph.
Given a set $C$, 
a function $c\colon E(G)\to 2^C$ is called an \emph{edge set colouring} of~$G$. A colour $\alpha \in C$ \emph{appears} on an edge $e$ if $\alpha\in c(e)$. 
We define the \defn{codegree} of $c$ as the maximum number of edges on which any two fixed colours appear together.
For a colour $\alpha \in C$, a vertex $v\in V(G)$, and disjoint sets $A,B\subseteq V(G)$, we define
\begin{itemize}
\item  $\dg_G^\alpha(v):=|\{u\in N_G(v)\colon \text{$\alpha$ appears on $uv$} \}|$;
\item $e^\alpha_G(A,B):=|\{ab\in E(G)\colon a\in A, b\in B, \text{ and $\alpha$ appears on $ab$} \}|$;
\item $e^\alpha(G):=|\{e\in E(G)\colon \text{$\alpha$ appears on $e$} \}|$.
\end{itemize}
We say that
\begin{itemize}
\item $c$ is \defn{(globally) $k$-bounded} if each colour appears on at most $k$ edges;
\item $c$ is \defn{locally $\Lambda$-bounded} if each colour class has maximum degree at most~$\Lambda$.
\end{itemize}
Given a partition $(V_i)_{i\in[r]}$ of $V(G)$, we say that $c$ is \defn{colour-split with respect to $(V_i)_{i\in[r]}$} if for all $e,f\in E(G)$ we have $c(e)\cap c(f)=\emptyset$ whenever $e\in E(G[V_i,V_j])$ and $f\notin E(G[V_i,V_j])$.
If the partition is clear from the context, we just say that $c$ is colour-split.
We call a subgraph $G'$ of $G$ \emph{rainbow} if all the edges in $G'$ have pairwise disjoint colour sets. 

\subsection{Probabilistic tools} In this section, we state a well-known Chernoff-type bound and McDiarmid's inequality. These will be the main tools to establish concentration of a random variable for the large deviation results we need.\COMMENT{We do not use the Chernoff bound explicitly. Just something like ``with a Chernoff bound, one can see..."}

\begin{theorem}[Chernoff's bound, see~\cite{JLR:00}]\label{thm:general chernoff simple}
Suppose $X_1,\dots,X_m$ are independent Bernoulli random variables. Let $X:=\sum_{i=1}^m X_i$. Then, for all $t\ge 0$, $$\prob{|X-\expn{X}|\ge t} \le 2 \exp\left({-\frac{t^2}{2(\expn{X} + t/3)}}\right).$$
\end{theorem}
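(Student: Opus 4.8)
The statement in question is the classical Chernoff/Bernstein-type tail inequality for a sum of independent Bernoulli random variables, and the intended ``proof'' is really just a pointer to the literature (here \cite{JLR:00}); but let me sketch how I would prove it from scratch. The standard route is the exponential moment method (Bernstein's trick). For a parameter $\lambda > 0$ to be optimised later, I would bound the upper tail $\prob{X - \expn{X} \ge t}$ by $\eul^{-\lambda t}\, \expn{\eul^{\lambda(X - \expn{X})}}$ via Markov's inequality applied to the nonnegative random variable $\eul^{\lambda(X-\expn{X})}$. By independence, $\expn{\eul^{\lambda(X-\expn{X})}} = \prod_{i=1}^m \expn{\eul^{\lambda(X_i - p_i)}}$ where $p_i := \expn{X_i}$, so the whole problem reduces to bounding the single-variable moment generating function.

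\textbf{Key steps.} First, for each $i$ I would estimate $\expn{\eul^{\lambda(X_i - p_i)}} = \eul^{-\lambda p_i}(1 - p_i + p_i \eul^\lambda) = \eul^{-\lambda p_i}(1 + p_i(\eul^\lambda - 1))$. Using $1 + x \le \eul^x$, this is at most $\exp\!\big(p_i(\eul^\lambda - 1 - \lambda)\big)$. Taking the product over $i$ and writing $\mu := \expn{X} = \sum_i p_i$ gives $\expn{\eul^{\lambda(X - \mu)}} \le \exp\!\big(\mu(\eul^\lambda - 1 - \lambda)\big)$, hence $\prob{X - \mu \ge t} \le \exp\!\big(\mu(\eul^\lambda - 1 - \lambda) - \lambda t\big)$. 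Second, I would optimise over $\lambda$: the exact optimum is $\lambda = \log(1 + t/\mu)$, which yields the sharp bound $\exp\!\big(-\mu\, h(t/\mu)\big)$ with $h(x) = (1+x)\log(1+x) - x$. To reach the stated (cleaner but slightly weaker) form, I would invoke the elementary inequality $h(x) \ge \frac{x^2}{2(1 + x/3)}$ for $x \ge 0$, which gives $\prob{X - \mu \ge t} \le \exp\!\big(-\frac{t^2}{2(\mu + t/3)}\big)$. Third, for the lower tail $\prob{X - \mu \le -t}$ I would run the same argument with $-\lambda$ in place of $\lambda$: one gets $\expn{\eul^{-\lambda(X-\mu)}} \le \exp\!\big(\mu(\eul^{-\lambda} - 1 + \lambda)\big)$, and optimising with $\lambda = \log(1/(1 - t/\mu))$ (valid for $t < \mu$; the case $t \ge \mu$ is trivial since $X \ge 0$) gives $\exp\!\big(-\mu\, h(-t/\mu)\big) \le \exp\!\big(-\frac{t^2}{2\mu}\big) \le \exp\!\big(-\frac{t^2}{2(\mu + t/3)}\big)$. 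Combining the two tails with a union bound produces the factor of $2$ and the claimed two-sided estimate.

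\textbf{Main obstacle.} None of the steps is genuinely hard; the only slightly delicate point is verifying the elementary analytic inequality $h(x) = (1+x)\log(1+x) - x \ge \frac{x^2}{2(1+x/3)}$ for all $x \ge 0$, which is where the somewhat unusual denominator $\expn{X} + t/3$ in the statement comes from. This is a one-variable calculus exercise (e.g.\ compare Taylor expansions, or check that the difference is $0$ with vanishing derivative at $x=0$ and has nonnegative derivative thereafter), but it is the one place where a little care is needed rather than pure bookkeeping. Since the inequality as stated is standard and a reference is given, in the paper itself I would simply cite \cite{JLR:00} rather than reproduce this computation.
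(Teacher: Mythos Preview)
Your sketch is correct and follows the standard exponential moment (Bernstein) argument for Chernoff-type bounds. The paper itself does not prove this theorem at all; it is simply quoted from the reference~\cite{JLR:00} as a black-box tool, so your proposal goes well beyond what the paper does. You also correctly anticipated this, noting that in the paper you would simply cite the reference rather than reproduce the computation.
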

\COMMENT{For $0\le \eps\le 3/2$, this implies (with $t=\eps\expn{X}$) that $\prob{X\neq (1\pm \eps)\expn{X}} \le 2 \exp\left({-\eps^2\expn{X}/3}\right).$ If $t\ge 7\expn{X}$, we obtain $\prob{X \ge t} \le 2 \exp(-t).$}

\begin{theorem}[McDiarmid's inequality, see~{\cite[Lemma~1.2]{mcdiarmid:89}}] \label{thm:McDiarmid}
Suppose $X_1,\dots,X_m$ are independent Bernoulli random variables and suppose $b_1,\dots,b_m\in [0,B]$.
Suppose $X$ is a real-valued random variable determined by $X_1,\dots,X_m$ such that changing the outcome of $X_i$ changes $X$ by at most $b_i$ for all $i\in [m]$.
Then, for all $t>0$, we have 
$$\prob{|X-\expn{X}|\ge t} 
\le 2 \exp\left({-\frac{2t^2}{\sum_{i=1}^m b_i^2}}\right)
\le 2 \exp\left({-\frac{2t^2}{B\sum_{i=1}^m b_i}}\right).$$
\end{theorem}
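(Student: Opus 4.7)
The plan is to derive this from the classical Azuma--Hoeffding martingale concentration bound applied to the Doob martingale of $X$.

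First I would set $Y_k := \ex[X \mid X_1, \ldots, X_k]$ for $k \in [m]_0$. Then $Y_0 = \ex[X]$, and since $X$ is determined by $X_1, \ldots, X_m$ we have $Y_m = X$. Writing $X - \ex[X] = \sum_{k=1}^{m}(Y_k - Y_{k-1})$ exhibits the left-hand side as a sum of martingale differences with respect to the natural filtration.

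The key step is the bound $|Y_k - Y_{k-1}| \le b_k$ almost surely. Fix an outcome $(x_1, \ldots, x_{k-1})$ of the first $k-1$ variables, and for $y \in \{0,1\}$ set $f(y) := \ex[X \mid X_1 = x_1, \ldots, X_{k-1} = x_{k-1}, X_k = y]$; this is an expectation over the remaining independent Bernoulli variables $X_{k+1}, \ldots, X_m$. By the bounded-differences hypothesis, changing only the $k$th coordinate changes $X$ pointwise by at most $b_k$, and since $X_k$ is independent of $X_{k+1}, \ldots, X_m$ this pointwise bound survives integration: $|f(1) - f(0)| \le b_k$. Now $Y_k$ conditioned on $(x_1,\ldots,x_{k-1})$ equals $f(X_k)$, while $Y_{k-1}$ equals $(1-p)f(0) + p f(1)$ with $p := \pr[X_k=1]$; on the event $\{X_k = 1\}$ this gives $|Y_k - Y_{k-1}| = (1-p)|f(1)-f(0)| \le b_k$, and symmetrically on $\{X_k = 0\}$.

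Applying the Azuma--Hoeffding inequality to this martingale difference sequence then yields $\pr[|X - \ex[X]| \ge t] \le 2 \exp(-2t^2/\sum_{i=1}^{m} b_i^2)$, which is the first stated bound. The second bound follows at once from $b_i \le B$: indeed $b_i^2 \le B b_i$, so $\sum_i b_i^2 \le B \sum_i b_i$.

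The main obstacle (if one can call it that) is establishing the bounded-difference estimate for the martingale increments; this is where both the independence of the $X_i$ and the single-coordinate change hypothesis are used crucially. The remainder is a direct invocation of Azuma--Hoeffding plus a trivial inequality, so I would expect the write-up to be very short.
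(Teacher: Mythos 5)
The paper does not prove this result; it cites it directly from McDiarmid's 1989 survey (Lemma~1.2 there), so there is no in-paper proof to compare against. Your route---the Doob martingale of $X$ together with a Hoeffding--Azuma concentration bound on the increments---is the standard one and is essentially McDiarmid's own argument, so the overall structure is correct.

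There is, however, a constant-sensitive slip in the final step. After computing that, conditionally on $(x_1,\dots,x_{k-1})$, the increment $D_k := Y_k - Y_{k-1}$ equals $(1-p)\bigl(f(1)-f(0)\bigr)$ on $\{X_k=1\}$ and $-p\bigl(f(1)-f(0)\bigr)$ on $\{X_k=0\}$, you summarize this only as $|D_k|\le b_k$. Feeding that two-sided bound into the most commonly stated form of Azuma's inequality (increments bounded in absolute value by $c_k$) yields the tail $2\exp\!\bigl(-t^2/(2\sum_k b_k^2)\bigr)$, which is weaker by a factor of four in the exponent than the bound claimed. To obtain $2\exp\!\bigl(-2t^2/\sum_k b_k^2\bigr)$ you must use the sharper fact, already visible in your computation, that $D_k$ conditionally takes values in an interval of length $|f(1)-f(0)|\le b_k$ (not merely $[-b_k,b_k]$, whose length is $2b_k$). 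Hoeffding's lemma then gives $\expn{e^{sD_k}\mid X_1,\dots,X_{k-1}}\le e^{s^2 b_k^2/8}$; multiplying over $k$, applying Markov to $e^{s(X-\expn{X})}$, optimizing over $s$, and repeating for the lower tail produces exactly the stated constant. Once you make this range observation explicit and invoke the range form of the Azuma--Hoeffding inequality (or Hoeffding's lemma directly), the proof is complete and correct; the second displayed inequality then follows trivially from $b_i^2\le Bb_i$, as you note.
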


\subsection{Pseudorandom hypergraph matchings}\label{sec:hypermatchings}

As sketched in Section~\ref{sec:overview}, we use hypergraph matchings to model rainbow embeddings. 
In this section, we introduce a theorem from~\cite{EGJ:19a} on `pseudorandom' hypergraph matchings (Theorem~\ref{thm:hypermatching}) which will play an important role in Section~\ref{sec:4graphs new2}.

Following the seminal result of R\"odl~\cite{rodl:85} on approximate Steiner systems, Pippenger observed that any almost regular uniform hypergraph with small codegrees has an almost perfect matching.
In~\cite{EGJ:19a}, the authors proved a tool which allows to obtain `pseudorandom' matchings in this setting. 
To make this more precise, we define 
for a hypergraph $\cH$ and vertices $u,v\in V(\cH)$ the degree $\dg_{\cH}(v):=|\{e\in E(\cH)\colon v\in e \}|$ and codegree $\dg_{\cH}(uv):=|\{e\in E(\cH)\colon \{u,v \}\subseteq e \}|$.
We define
\begin{align*}
\Delta(\cH):=\max_{v\in V(\cH)}\dg_{\cH}(v) \text{~~~and~~~}
\Delta^c(\cH):=\max_{u\neq v\in V(\cH)} \dg_{\cH}(uv)
\end{align*}
to be the \defn{maximum degree} and \defn{maximum codegree} of~$\cH$, respectively.
A \emph{matching} in $\cH$ is a collection of disjoint edges.

Suppose for simplicity that we are given a $D$-regular hypergraph and want to find an (almost) perfect matching~$\cM$. Moreover, we wish $\cM$ to be `pseudorandom', that is, to have certain properties that we expect from an idealized random matching. Heuristically, we may expect that every edge of $\cH$ is in a random perfect matching with probability~$1/D$. Thus, given a subset $U\In V(\cH)$, we expect $|E(\cH[U])|/D$ matching edges inside~$U$, and we may require similar statistics for matching edges crossing certain vertex sets. This can be formalized in a quite general way.
Given a set $X$, a \defn{weight function on~$X$} is a function $\omega\colon X\to \bR_{\ge 0}$.
For a subset $X'\In X$, we define $\omega(X'):=\sum_{x\in X'}\omega(x)$. If $\omega$ is a weight function on $E(\cH)$, the above heuristic would imply that we expect $\omega(\cM)\approx \omega(E(\cH))/D$.
The following theorem asserts that a hypergraph with small codegrees has a matching that is pseudorandom in this sense.

\begin{theorem}[\cite{EGJ:19a}]\label{thm:hypermatching}
Suppose $1/\Delta\ll\delta,1/r$ and $r\in \bN$ with $r\ge 2$, and let $\eps:=\delta/50r^2$. 
Let $\cH$ be an $r$-uniform hypergraph with $\Delta(\cH)\leq \Delta$ and $\Delta^c(\cH)\le \Delta^{1-\delta}$ as well as $e(\cH)\leq \exp(\Delta^{\eps^2})$. 
Suppose that $\cW$ is a set of at most $\exp(\Delta^{\eps^2})$ weight functions on~$E(\cH)$.
Then, there exists a matching $\cM$ in~$\cH$ such that $\omega(\cM)=(1\pm \Delta^{-\eps}) \omega(E(\cH))/\Delta$ for all $\omega \in \cW$ with $\omega(E(\cH))\ge \max_{e\in E(\cH)}\omega(e)\Delta^{1+\delta}$.
\end{theorem}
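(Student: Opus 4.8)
The plan is to use the semi-random method (the Rödl nibble), in the form that underlies the Pippenger--Spencer theorem, while carrying the whole family $\cW$ of weight functions along as quantities that must stay close to an explicitly predicted trajectory. First I would pass to a hypergraph that is exactly $\Delta$-regular by adding dummy vertices and dummy $r$-edges, keeping $r$-uniformity, inflating $\Delta^c(\cH)$ and $e(\cH)$ by at most a constant factor, and extending every $\omega\in\cW$ by $0$ on the new edges; set $D_0:=\Delta$. Fix a suitably small $p=p(r,\delta)$ and run rounds $i=1,2,\dots$: given the surviving hypergraph $\cH_i$, activate each edge independently with probability $p/D_i$; let $\cM_i$ be the set of activated edges meeting no other activated edge, add $\cM_i$ to $\cM$, and delete every vertex covered by $\cM_i$ together with every edge of $\cH_i$ meeting such a vertex. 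A short calculation indicates that if $\dg_{\cH_i}(v)\approx D_i$ for all $v$, then in round $i$ a vertex is covered with probability $\approx y:=p\eul^{-rp}$, an edge survives with probability $\approx(1-y)^r$, the new degrees satisfy $\dg_{\cH_{i+1}}(v)\approx D_{i+1}:=(1-y)^{r-1}D_i$, and round $i$ adds $\approx y\,W^\omega_i/D_i$ to $\omega(\cM)$, where $W^\omega_i:=\omega(E(\cH_i))$; summing the last estimate over all rounds gives exactly the target $W^\omega_1/\Delta=\omega(E(\cH))/\Delta$.

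The heart of the matter is to show that, with probability at least $1-\eul^{-\Delta^{\Omega(\eps^2)}}$, the following stay within a $\Delta^{-\Omega(\delta)}$ relative error of their predicted values in every round $i$ up to a stopping time $L$: (a) $\dg_{\cH_i}(v)$ around $D_i$ for every relevant vertex $v$; (b) $W^\omega_i$ around $(1-y)^{r(i-1)}W^\omega_1$ for every $\omega\in\cW$, as long as $W^\omega_i\ge\max_e\omega(e)\Delta^{1+\delta}$; and hence, from (a) and (b), (c) the conservation identity $\omega(\cM^{(\le i)})+W^\omega_i/D_i\approx W^\omega_1/\Delta$, where $\cM^{(\le i)}:=\cM_1\cup\dots\cup\cM_i$. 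Codegrees only decrease, so $\Delta^c(\cH_i)\le\O(\Delta^{1-\delta})$ holds throughout for free, and this is exactly what drives the concentration: flipping one activation in round $i$ changes $\cM_i$ by $\O_r(1)$ edges, hence the covered set by $\O_r(1)$ vertices, and any fixed vertex lies in at most $\Delta^c(\cH)$ edges through any other fixed vertex, so the one-step influence on each tracked quantity, as well as the variance contributed by each edge, is governed by $\Delta^c(\cH)$. A Freedman-type martingale inequality (in the spirit of Theorem~\ref{thm:McDiarmid}, but quantitative in the variance) then yields the claimed deviation probability, and since $\eps^2\ll\delta$ one can union-bound over the $\eul^{\O(\Delta^{\eps^2})}$ many tracked quantities and over the (at most $\Delta$) rounds. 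It is crucial that $p$ be small enough that the round-to-round error recursion, schematically $\eta_{i+1}\le\O(rp)\,\eta_i+\Delta^{-\Omega(\delta)}$, is a contraction; otherwise the relative errors would compound geometrically over the many rounds.

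To finish, take $L$ to be the last round with $D_L\ge\Delta^{1-\delta/2}$, so that the one-step influence $\O(\Delta^{1-\delta})$ stays a polynomial factor below the current degrees and the concentration above remains valid; one checks $(1-y)^L\le\Delta^{-\eps}$, so for every $\omega$ whose weight never drops below $\max_e\omega(e)\Delta^{1+\delta}$ the identity (c) at $i=L$ gives $\omega(\cM^{(\le L)})=\big(1\pm\O(\Delta^{-\eps})\big)W^\omega_1/\Delta$ directly. The remaining $\omega\in\cW$ -- those whose weight is only just above the threshold, so that tracking stops after a bounded number of rounds -- require a separate argument that the later rounds still pick up the right amount of $\omega$-weight, and one must also bound the total contribution of the edges surviving at time $L$; this is handled by iterating the whole procedure on the leftover hypergraph, which is again near-regular and whose codegrees, having decayed strictly more slowly than its degrees (a factor $(1-y)^{r-2}$ against $(1-y)^{r-1}$ per round), remain small relative to its current maximum degree -- alternatively by reserving a small random sub-hypergraph at the outset to absorb the remainder. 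Taking $\cM$ to be the union of the $\cM_i$ over all rounds and rescaling $\eps$ to absorb the implied constants yields a matching $\cM$ with $\omega(\cM)=(1\pm\Delta^{-\eps})\,\omega(E(\cH))/\Delta$ for every admissible $\omega$, as required.

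The step I expect to be the main obstacle is the concentration in (a)--(b) through a nibble round, and above all keeping the degrees near-uniform: this needs both the contraction of the error recursion (hence the careful choice of $p$) and the full strength of the codegree hypothesis $\Delta^c(\cH)\le\Delta^{1-\delta}$ to make the influence of a single edge polynomially smaller than the current degrees; it is also what pins down the stopping time and therefore forces the somewhat delicate end-game needed to control the weight functions near the threshold and the leftover edges.
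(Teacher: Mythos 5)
The statement is cited from~\cite{EGJ:19a} and is used in the present paper as a black box; there is no in-paper proof of Theorem~\ref{thm:hypermatching} to compare against. Your proposal reconstructs a semi-random (R\"odl nibble) argument with martingale tracking of degrees and weight functions, which is the standard route to results of this type, so the overall strategy is sound. That said, a few specific points in the sketch are off or underdeveloped. First, the claimed one-step influence of a single activation ``governed by $\Delta^c(\cH)$'' is only correct for tracking degrees $\dg_{\cH_i}(u)$ (where changing the coverage status of one vertex $v$ affects at most $\dg_{\cH}(u,v)\le\Delta^c$ surviving edges through $u$). For the weight totals $W^\omega_i=\omega(E(\cH_i))$, changing coverage of one vertex $v$ can change the surviving edge set through $v$ by up to $\Delta(\cH)$ edges, so the worst-case influence is of order $r\Delta\cdot\max_e\omega(e)$, not $\Delta^c\cdot\max_e\omega(e)$. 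This does not sink the argument --- the Freedman bound you invoke is saved by a conditional-variance estimate of order $r^{O(1)}p\,\Delta\,(\max_e\omega(e))\,W^\omega_i$, which together with the hypothesis $W^\omega_1\ge\max_e\omega(e)\Delta^{1+\delta}$ yields deviation probability $\exp(-\Delta^{\Omega(\delta)})$, comfortably beating the $\exp(\Delta^{\eps^2})$-fold union bound since $\eps^2\ll\delta$. But you should be explicit that McDiarmid with worst-case Lipschitz constants is not enough here, and that it is the variance (or a bounded-differences-with-exceptions argument) that is doing the work. Second, the ``contraction'' $\eta_{i+1}\le\O(rp)\eta_i+\Delta^{-\Omega(\delta)}$ is not what actually happens: relative errors in the tracked quantities accumulate additively rather than contract. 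This is still harmless because the number of rounds up to $D_L\approx\Delta^{1-\delta/2}$ is only $\O(\eps\log\Delta/y)$, so the total accumulated error is $\log^{\O(1)}\Delta\cdot\Delta^{-\Omega(\delta)}\ll\Delta^{-\eps}$, but the reasoning as stated is misleading. Third, the endgame --- controlling the weight of the edges left at time $L$ and the weight functions near the threshold --- is genuinely left open. Your two suggested fixes (iterating the nibble on the leftover, or reserving a random sub-hypergraph at the outset) are both plausible, but the first needs a new codegree-to-degree check on $\cH_L$ as you note, and the second requires showing that the reserved part carries the right proportion of each $\omega$-weight, which is another union bound over $\cW$; neither is spelled out. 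In summary, the route is right and the concentration step you identify as the main obstacle is tractable, but the influence claim needs correcting and the termination and endgame need a complete argument before this would count as a proof.
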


We refer the interested reader to \cite{EGJ:19a} for more information on preceding results and further variants and applications of Theorem~\ref{thm:hypermatching}.

\subsection{Regularity} 
Given $\eps>0$ and $d\in[0,1]$, a bipartite graph $G$ with vertex classes $(V_1,V_2)$ is called \defn{$(\eps,d)$-regular} if for all pairs $S\In V_1$ and $T\In V_2$ with $|S|\ge \eps|V_1|$, $|T|\ge \eps |V_2|$,
we have $d_G(S,T)=d\pm\eps$. 
The following is one of the fundamental properties of $\eps$-regularity.
\begin{fact} \label{fact:regularity}
Let $G$ be an $(\eps,d)$-regular bipartite graph with partition $(A,B)$, and let $Y\In B$ with $|Y|\ge \eps|B|$. Then all but at most $2\eps|A|$ vertices of $A$ have $(d\pm \eps)|Y|$ neighbours in~$Y$.  
\end{fact}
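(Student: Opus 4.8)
\textbf{Proof proposal for \cref{fact:regularity}.}

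The plan is to argue by contradiction, exploiting the definition of $(\eps,d)$-regularity applied to a carefully chosen subset of $A$. Suppose the conclusion fails, so there is a set $A'\subseteq A$ with $|A'|>2\eps|A|$ such that every vertex of $A'$ has fewer than $(d-\eps)|Y|$ neighbours in $Y$ or every vertex of $A'$ has more than $(d+\eps)|Y|$ neighbours in $Y$. Either way, by splitting $A'$ into the ``too few'' part and the ``too many'' part, one of these two subsets has size greater than $\eps|A|$; call it $S$. (Here I use $|A'|>2\eps|A|$ precisely so that after splitting into two pieces, one piece still exceeds $\eps|A|$.)

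Next I would estimate $e_G(S,Y)$ in two ways. On the one hand, since $|S|\ge \eps|A|=\eps|V_1|$ and $|Y|\ge \eps|B|=\eps|V_2|$, the $(\eps,d)$-regularity of $G$ gives $d_G(S,Y)=d\pm\eps$, hence $e_G(S,Y)=(d\pm\eps)|S||Y|$. On the other hand, summing the individual degrees into $Y$ over the vertices of $S$: if $S$ is the ``too few'' part then $e_G(S,Y)<(d-\eps)|Y|\cdot|S|$, contradicting the lower bound $e_G(S,Y)\ge(d-\eps)|S||Y|$; if $S$ is the ``too many'' part then $e_G(S,Y)>(d+\eps)|Y|\cdot|S|$, contradicting the upper bound $e_G(S,Y)\le(d+\eps)|S||Y|$. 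In both cases we reach a contradiction, which proves that at most $2\eps|A|$ vertices of $A$ fail to have $(d\pm\eps)|Y|$ neighbours in $Y$.

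There is no real obstacle here; the only point requiring a little care is the bookkeeping when splitting the exceptional set into its ``degree too small'' and ``degree too large'' parts, to ensure one part is large enough to invoke regularity, and making sure the strict/non-strict inequalities line up so that the two estimates for $e_G(S,Y)$ genuinely contradict each other. \qed
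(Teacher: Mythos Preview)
Your argument is correct and is exactly the standard proof of this basic regularity fact. The paper itself does not give a proof; it simply states the result as one of ``the fundamental properties of $\eps$-regularity'' and uses it as a black box, so there is nothing further to compare.
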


We will also often use the fact that super-regularity is robust with respect to small vertex and edge deletions.
\begin{fact}\label{fact:regularity robust}
Suppose $1/n \ll \eps \ll d$.
Let $G$ be an $(\eps,d)$-super-regular bipartite graph with partition~$(A,B)$, where $\eps^{1/6} n\le |A|,|B|\le n$. If $\Delta(H)\le \eps n$ and $X\In A\cup B$ with $|X|\le \eps n$, then $G[A\sm X,B\sm X]-E(H)$ is $(\eps^{1/3},d)$-super-regular.\COMMENT{Proof: The number of edges between two vertex sets $Z_1$ and $Z_2$ is at least 
$(d-\eps)|Z_1||Z_2|- (\eps n + \eps n)(|Z_1|+|Z_2|)\geq |Z_1||Z_2|(d- \eps - 4\eps^{1/2})\geq |Z_1||Z_2|(d-\eps^{1/3})$ whenever $|Z_i|\geq \eps^{1/2}n= \eps^{1/3} \cdot \eps^{1/6}n$.}
\end{fact}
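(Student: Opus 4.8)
\textbf{Proof plan for Fact~\ref{fact:regularity robust}.}
The plan is to verify the two defining conditions of $(\eps^{1/3},d)$-super-regularity for the graph $G':=G[A\sm X,B\sm X]-E(H)$ directly, starting from the corresponding conditions for $G$ and simply tracking how much the deletions can perturb edge counts and degrees. Throughout, write $A':=A\sm X$ and $B':=B\sm X$, and note that since $|X|\le\eps n$ we have $|A'|,|B'|\ge\eps^{1/6}n-\eps n\ge(1-\eps^{5/6})\eps^{1/6}n$, so $A',B'$ are still of order $n$.

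First I would check the density condition. Take $S\In A'$ and $T\In B'$ with $|S|\ge\eps^{1/3}|A'|$ and $|T|\ge\eps^{1/3}|B'|$; since $|A'|,|B'|\ge\eps^{1/6}n/2$, say, this forces $|S|,|T|\ge\eps^{1/2}n/2\ge\eps|A|,\eps|B|$ for $n$ large, so the $(\eps,d)$-regularity of $G$ applies to the pair $(S,T)$ inside $G$ and gives $e_G(S,T)=(d\pm\eps)|S||T|$. Deleting the edges of $H$ removes at most $\Delta(H)\cdot\max\{|S|,|T|\}\le\eps n(|S|+|T|)$ edges (we restricted to $A'\cup B'$, but $X$ was already removed before forming $S,T$, so only the $E(H)$-deletion matters here). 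Hence
\begin{align*}
e_{G'}(S,T) &= e_G(S,T)\pm\eps n(|S|+|T|) = |S||T|\Bigl(d\pm\eps\pm\tfrac{\eps n(|S|+|T|)}{|S||T|}\Bigr).
\end{align*}
Using $|S|,|T|\ge\eps^{1/2}n/2$ we get $\eps n(|S|+|T|)/(|S||T|)\le\eps n\cdot 2/(\eps^{1/2}n/2)=4\eps^{1/2}$, so $e_{G'}(S,T)=(d\pm(\eps+4\eps^{1/2}))|S||T|=(d\pm\eps^{1/3})|S||T|$ provided $\eps$ is small enough, which is the first condition.

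Next I would check the degree condition: for $v\in A'$ (the case $v\in B'$ is symmetric) we need $\dg_{G'}(v)=(d\pm\eps^{1/3})|B'|$. Since $G$ is $(\eps,d)$-super-regular, $\dg_G(v)=(d\pm\eps)|B|$; removing $X$ from the other side loses at most $|X|\le\eps n$ neighbours, and removing $E(H)$ loses at most $\Delta(H)\le\eps n$ further neighbours, so $\dg_{G'}(v)=\dg_G(v)\pm 2\eps n=(d\pm\eps)|B|\pm 2\eps n$. Finally $|B'|=|B|\pm\eps n$ and $|B|\ge\eps^{1/6}n$, so writing everything as a multiple of $|B'|$ the total additive error is at most $(\eps|B|+2\eps n+d\eps n)/|B'|\le C\eps n/(\eps^{1/6}n/2)\le C'\eps^{5/6}\le\eps^{1/3}$ for small $\eps$; hence $\dg_{G'}(v)=(d\pm\eps^{1/3})|B'|$, completing the verification. (Here I have folded the $\Delta(H)\le\eps n$ and $|X|\le\eps n$ bounds together; one could of course state the computation once and apply it to both deletions.)

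The argument is entirely routine — there is no real obstacle, only bookkeeping. The one point requiring a moment's care is making sure the thresholds line up: that a set which is an $\eps^{1/3}$-fraction of the \emph{shrunken} class $A'$ is still at least an $\eps$-fraction of the \emph{original} class $A$, so that $G$'s regularity can be invoked; this is exactly why the conclusion uses the weaker parameter $\eps^{1/3}$ rather than $\eps$, and why the hypothesis $|A|,|B|\ge\eps^{1/6}n$ (rather than, say, $\ge\eps^{1/2}n$) is comfortable. Everything else is a sequence of ``$(d\pm\eps)\cdot(\text{something})\pm(\text{small})=(d\pm\eps^{1/3})\cdot(\text{something})$'' estimates of the kind sketched in the footnote already attached to the statement.
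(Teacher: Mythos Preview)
Your proof is correct and follows essentially the same approach as the paper's own (commented) argument: verify the two defining conditions of super-regularity directly by bounding how much the deletions of $X$ and $E(H)$ can perturb densities and degrees, using that an $\eps^{1/3}$-fraction of a class of size $\ge\eps^{1/6}n$ still has size $\ge\eps^{1/2}n$. Your write-up is in fact more careful than the paper's sketch (you note, for instance, that the $X$-deletion is irrelevant for the density check since $S,T$ already live in $A',B'$), but the method is identical.
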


The following is essentially a result from~\cite{DLR:95}.
(In~\cite{DLR:95} it is proved in the case when $|A|=|B|$ with $16\epsilon^{1/5}$ instead of $\epsilon^{1/6}$.
The version stated below can be easily derived from this.)

\begin{theorem}\label{thm: almost quasirandom}
Suppose $1/n\ll\eps \ll \gamma,d $.
Suppose $G$ is a bipartite graph with vertex partition $(A,B)$ such that $|A|=n$, $\gamma n \leq |B|\leq \gamma^{-1}n$ and at least $(1-5\eps)n^2/2$ pairs $u,v\in A$
satisfy $\dg_G(u),\dg_G(v)\geq (d- \eps)|B|$ and $|N_G(u,v)|\leq (d+ \eps)^2|B|$.
Then $G$ is $(\eps^{1/6},d)$-regular. 
\end{theorem}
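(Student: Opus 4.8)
\textbf{Plan of proof for Theorem~\ref{thm: almost quasirandom}.}
The goal is to upgrade the statement from~\cite{DLR:95}, which handles balanced bipartite graphs $(A,B)$ with $|A|=|B|=n$ and gives $(16\eps^{1/5},d)$-regularity, to the unbalanced case $\gamma n\le |B|\le \gamma^{-1}n$ with the cleaner exponent $\eps^{1/6}$. The plan is to reduce the unbalanced case to the balanced one by a standard padding/splitting trick on the larger side. I would not reprove the core DLR argument; instead I would invoke it as a black box on an auxiliary balanced graph and transfer the conclusion back.

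First I would set up the reduction. If $|B|\ge |A|=n$, partition $B$ (almost arbitrarily, but balancing multiplicities) into blocks $B_1,\dots,B_k$ each of size between $n/2$ and $n$, where $k=\lceil |B|/n\rceil = O_\gamma(1)$, and then inside each $B_\ell$ pass to a further random refinement so that we may treat $(A,B_\ell)$ as a graph whose smaller side has size comparable to $n$; more simply, for each $\ell$ take $A$ together with $B_\ell$ and observe that the hypothesis — at least $(1-5\eps)\binom{n}{2}$ good pairs $u,v\in A$ with $\dg(u),\dg(v)\ge(d-\eps)|B|$ and $|N(u,v)|\le(d+\eps)^2|B|$ — restricts, via averaging over the blocks, to a similar statement for a positive fraction of the blocks. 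Symmetrically, if $|B|\le |A|$ one instead subdivides $A$. The technical content is bookkeeping: a good pair for $B$ is good for ``most'' blocks $B_\ell$ in a suitable averaging sense, because $\dg_G(u)=\sum_\ell \dg_{G[A,B_\ell]}(u)$ and $|N_G(u,v)|=\sum_\ell |N_{G[A,B_\ell]}(u,v)|$, so a pair that is degree-heavy and codegree-light globally cannot be degree-light or codegree-heavy on too many blocks simultaneously. One then applies the balanced DLR theorem to the blocks on which the good-pair count survives, obtains $(\eps',d)$-regularity of each such $G[A,B_\ell]$ for $\eps'$ a fixed power of $\eps$, and glues: a disjoint union of $(\eps',d)$-regular pairs $G[A,B_\ell]$ sharing the side $A$ is easily seen to be $(O_\gamma(\eps'),d)$-regular in $G[A,B]$, since for $S\subseteq A$, $T\subseteq B$ with $|S|\ge\eps''|A|$, $|T|\ge\eps''|B|$ one has $T\cap B_\ell$ large for at least one $\ell$ and can sum the density estimates with weights $|T\cap B_\ell|/|T|$. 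Tracking constants, choosing $\eps$ small enough against $\gamma,d$ turns $16\eps^{1/5}$ into $\eps^{1/6}$.

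The main obstacle I anticipate is the averaging step: converting the single global bound ``$(1-5\eps)\binom{n}{2}$ good pairs'' into a per-block hypothesis with a comparable error term, without losing a factor of $k$ that would ruin the constant $5\eps$. A pair $u,v$ can fail to be ``block-good'' for $B_\ell$ either because $\dg_{G[A,B_\ell]}(u)$ dips below $(d-\eps)|B_\ell|$ or because $|N_{G[A,B_\ell]}(u,v)|$ exceeds $(d+\eps)^2|B_\ell|$, and a globally good pair could in principle distribute these failures across all $k$ blocks. The resolution is to use a slightly stronger consequence of the hypothesis — that for a $(1-5\eps)$-fraction of pairs $u,v$, not only do the global degree and codegree bounds hold, but in fact the codegree $|N_G(u,v)|\le (d+\eps)^2|B|$ forces, by a Cauchy--Schwarz / convexity argument on the partition $\{N_G(u,v)\cap B_\ell\}_\ell$, that $|N_{G[A,B_\ell]}(u,v)|\le (d+\eps+O(\eps^{1/2}))^2|B_\ell|$ for \emph{all} $\ell$ simultaneously, apart from a further $O(\eps^{1/2})$-fraction of exceptional pairs. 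The degree condition transfers the same way. So one pays only a lower-order correction, and the per-block good-pair fraction is $1-O(\eps^{1/2})$, which is exactly the slack the DLR theorem tolerates. Everything else — the gluing lemma for regular pairs sharing one side, and the final optimization of exponents — is routine, so I would state those as short facts and spend the bulk of the write-up on this averaging argument.
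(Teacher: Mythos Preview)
The paper does not spell out a proof; it cites the balanced case from~\cite{DLR:95} and asserts the stated version ``can be easily derived from this.'' Your overall plan of reducing to the balanced DLR statement is therefore exactly what is intended. However, the specific reduction you sketch has a real gap.

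The problem is the averaging step. You partition $B$ into blocks $B_1,\dots,B_k$ and then claim that the global codegree bound $|N_G(u,v)|\le (d+\eps)^2|B|$ ``forces, by a Cauchy--Schwarz/convexity argument,'' that $|N_G(u,v)\cap B_\ell|\le (d+\eps+O(\eps^{1/2}))^2|B_\ell|$ for all $\ell$ simultaneously (outside a small exceptional set of pairs). This is false, and convexity cannot help: an upper bound on $\sum_\ell x_\ell$ places no constraint whatsoever on $\max_\ell x_\ell$; convexity of $x\mapsto x^2$ yields a \emph{lower} bound on $\sum x_\ell^2$, which is the wrong direction. Concretely, for a fixed good pair $u,v$ the entire common neighbourhood $N_G(u,v)$ could sit inside a single block $B_1$, making $|N_G(u,v)\cap B_1|$ roughly $k$ times too large, and nothing in the hypothesis prevents this from happening for \emph{every} good pair. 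The same objection applies to the degree lower bound: $\sum_\ell \dg_{G[A,B_\ell]}(u)\ge (d-\eps)|B|$ gives no per-block lower bound. Since your gluing step also only covers the blocks on which the hypothesis survives, the argument does not assemble into regularity of $G[A,B]$.

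A clean reduction that does work is vertex duplication rather than partition: replace each vertex of the smaller side by the appropriate constant number of clones with identical neighbourhoods, trimming to make the two sides exactly equal. Degrees and codegrees scale by the same factor as the part size, so the good-pair hypothesis transfers verbatim (pairs of clones of the same vertex form a negligible $O(1/n)$ fraction), the balanced DLR theorem applies, and $(\eps',d)$-regularity of the blow-up is equivalent to $(\eps',d)$-regularity of the original graph because test sets and their densities are unchanged under cloning. With $\eps\ll \gamma$, the resulting $16\eps^{1/5}$ is at most $\eps^{1/6}$. Alternatively, one simply checks that the DLR counting argument never uses $|A|=|B|$ beyond a bounded size ratio.
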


\subsection{Another rainbow blow-up lemma}
Our final tool is the following special case of the rainbow blow-up lemma from~\cite{GJ:18} for $o(n)$-bounded colourings. Even though the global boundedness condition is more restrictive there, it is still applicable on a random subset of vertices (see the discussion in Section~\ref{sec:overview}). 
As such, it is the main tool in our proof to turn a partial rainbow embedding into a complete one.

We say that $(H,G,(X_i)_{i\in[r]_0}, (V_i)_{i\in[r]_0})$ is an \defn{$(\eps,d)$-super-regular blow-up instance with exceptional sets $(X_0,V_0)$} if $X_0$ is an independent set in $H$, $|V_0|=|X_0|$ and $(H-X_0,G-V_0, (X_i)_{i\in[r]}, (V_i)_{i\in[r]})$ is an $(\eps,d)$-super-regular blow-up instance.
We call graphs $(A_i)_{i\in[r]}$ \emph{candidacy graphs} if $A_i$ is a bipartite graph with partition $(X_i,V_i)$ for all $i\in[r]$.

\begin{lemma}[{\cite[Lemma~5.2]{GJ:18}}]\label{lem:blow up matchings}
Suppose $1/n\ll\eps,\mu\ll d,1/r,1/\Delta$.
Let $(H,G,(X_i)_{i\in[r]_0},(V_i)_{i\in[r]_0})$ be an $(\eps,d_G)$-super-regular blow-up instance with exceptional sets $(X_0,V_0)$ and $(\eps,d_A)$-super-regular candidacy graphs $(A_i)_{i\in[r]}$, where $d_G,d_A\ge d$. Assume further that
\begin{enumerate}[label={\rm (\roman*)}]
	\item $\Delta(H) \leq \Delta$;
	\item\label{lem:cond2} $|V_i|=n$ for all $i\in[r]$;\COMMENT{For the absorption, $V_i$ will be the absorber sets of size $|V_i|=(\mu\pm\eps)n$. So in fact, in the absorption step, we might add even more vertices to the absorber than just the leftover vertices, in order to guarantee that every absorber set has equal size, say $(\mu+2\eps_r)n=:n'$. That is, we even tear off already embedded vertices and put them in the absorber.}
	\item $H[X_i,X_j]$ is a matching for all $ij\in \binom{[r]}{2}$.
\end{enumerate}
Let $c\colon E(G)\to C$ be a $\mu n$-bounded edge-colouring of $G$.
Suppose a bijection $\psi_0 \colon X_0 \to V_0$ is given such that 
\begin{enumerate}[label={\rm (\roman*)}]
\setcounter{enumi}{3}
\item for all $x\in X_0$, $i\in[r]$ and $x_i\in N_H(x)\cap X_i$,
we have $N_{A_i}(x_i)\subseteq N_G(\psi_0(x))$;
\item for all $i\in [r]$, $x\in X_i$, $v\in N_{A_i}(x)$ and distinct $x_0,x_0'\in N_H(x)\cap X_0$, we have $c(\psi_0(x_0)v)\neq c(\psi_0(x_0')v)$.\COMMENT{before: $c(\psi_0(x_0)v)\cap c(\psi_0(x_0')v)=\emptyset$, which is from the set colouring version, but have simplified here to normal colourings}
\end{enumerate}
Then there exists a rainbow embedding $\psi$ of $H$ into $G$ which extends $\psi_0$
such that $\psi(x)\in N_{A_i}(x)$ for all $i\in [r]$ and $x\in X_i$.
\end{lemma}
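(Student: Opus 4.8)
The plan is to prove Lemma~\ref{lem:blow up matchings} along the lines of the R\"odl--Ruci\'nski proof of the blow-up lemma: we embed the clusters $X_1,\dots,X_r$ one after another (the exceptional set $X_0$ being pre-embedded by the given bijection $\psi_0$), and at each step we realise the embedding of a cluster as a perfect matching in a suitable candidacy graph, which we arrange to be \emph{rainbow} by invoking the switching-method result of Coulson and Perarnau~\cite{CP:17} on rainbow perfect matchings in super-regular bipartite graphs with boundedly coloured edges; the boundedness of the colouring is genuinely used at exactly this point. Throughout, we maintain a partial rainbow embedding $\psi$ extending $\psi_0$ and, for each not-yet-embedded cluster $i$, an updated candidacy graph $A_i'\subseteq A_i$ with partition $(X_i,V_i)$; admissibility is recorded by keeping in $N_{A_i'}(x)$ only those $v$ for which all already-decided $H$-edges at $x$ can still be realised by edges of~$G$.

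We process $X_1,\dots,X_r$ in this order. When $X_j$ is reached, its current candidacy graph $A_j'$ has been obtained from $A_j$ by, for every $x\in X_j$ and every already-embedded $H$-neighbour $x'\in X_i$ of $x$ with $i<j$, intersecting $N_{A_j'}(x)$ with $N_G(\psi(x'))$. Since $H[X_i,X_j]$ is a matching, each embedded vertex constrains at most one $x\in X_j$, and since $\Delta(H)\le\Delta$ each $x$ is constrained at most $\Delta$ times; moreover hypothesis~(iv) guarantees that for every $H$-neighbour $x'\in X_0$ of $x$ the inclusion $N_{A_j'}(x)\subseteq N_G(\psi_0(x'))$ already holds, so edges to $X_0$ impose no restriction on the candidacy graph. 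To embed $X_j$, define the auxiliary edge set colouring $\tilde c_j$ on $E(A_j')$ by $\tilde c_j(xv):=\{\,c(\psi(x')v): x'\in N_H(x)\text{ already embedded}\,\}$; then any perfect matching $\sigma_j$ of $A_j'$ that is rainbow for $\tilde c_j$ and uses no colour already used by $\psi$ extends $\psi$ to $X_j$ in such a way that $\psi(H)$ remains rainbow --- here hypothesis~(v) is precisely what prevents the $X_0$-incident new edges at a common candidate from clashing with one another. Having fixed $\sigma_j$, we update the candidacy graphs of the remaining clusters as above and pass to $X_{j+1}$.

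Three points must be verified to run this. (a) \emph{$A_j'$ is super-regular}, with regularity parameter still much smaller than its density and density bounded below by a constant depending only on $d$ and $\Delta$. This is the classical difficulty in the R\"odl--Ruci\'nski argument: intersecting $N_{A_j}(x)$ with the neighbourhoods $N_G(\psi(x'))$ --- which behave quasirandomly with respect to $A_j$ because each $G[V_i,V_j]$ is super-regular --- preserves super-regularity up to a controlled worsening of the regularity parameter, and iterating over the at most $\Delta$ constraints per vertex and over the $r$ clusters is exactly what forces the hierarchy $1/n\ll\eps,\mu\ll d,1/r,1/\Delta$. (b) \emph{$\tilde c_j$ is $\mu n$-bounded with colour sets of size at most $\Delta$}: a colour $\gamma$ lies in $\tilde c_j(xv)$ only if $c(\psi(x')v)=\gamma$ for some already-embedded neighbour $x'$ of $x$, and the map sending such an occurrence to the $G$-edge $\psi(x')v$ is injective, so $\gamma$ occurs on at most $e^\gamma(G)\le\mu n$ edges of $A_j'$. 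Granted (a) and~(b), the Coulson--Perarnau theorem yields a rainbow perfect matching of $A_j'$ (the bounded-size colour sets pose no difficulty, since each edge of $A_j'$ then conflicts with at most $\Delta\mu n$ others and $\mu$ is small), and it remains only to also make this matching avoid the colours already used by~$\psi$.

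This last requirement is, I expect, the main obstacle. Forbidding the set $S$ of already-used colours means deleting from $A_j'$ every edge $xv$ with $\tilde c_j(xv)\cap S\neq\emptyset$, and the crude bound only shows that the number of such edges at a vertex $v\in V_j$ is at most the number of $G$-edges at $v$ with a colour in~$S$ --- which could be a constant proportion of $\dg_G(v)$ and would destroy super-regularity of $A_j'$. To get around this, the earlier matchings $\sigma_1,\dots,\sigma_{j-1}$ must be chosen not merely rainbow but sufficiently pseudorandom, so that the evolving set $S$ of used colours never concentrates on the candidate set of any future vertex; this is exactly the additional strength that the switching method provides (and that, in the present paper, the hypergraph-matching Theorem~\ref{thm:hypermatching} provides in its own setting). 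Propagating this pseudorandomness through all $r$ rounds and feeding it back into the super-regularity bookkeeping of~(a) is the technical heart of the argument. Once every cluster has been processed, $\psi$ is a rainbow embedding of $H$ into $G$ extending $\psi_0$ with $\psi(x)\in N_{A_i}(x)$ for all $i\in[r]$ and $x\in X_i$, which is the assertion of the lemma.
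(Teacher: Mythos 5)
Your proposal correctly identifies the R\"odl--Ruci\'nski style sequential embedding via candidacy graphs and the role of the Coulson--Perarnau rainbow matching theorem, which is indeed the strategy of~\cite{GJ:18}. However, the treatment of colour clashes has two genuine gaps. First, as you acknowledge, $G$-edges created in different rounds can share a colour, and your remedy is to choose $\sigma_1,\dots,\sigma_{j-1}$ ``sufficiently pseudorandom'' so that the set $S$ of used colours never concentrates on a future candidate neighbourhood. But this does not follow from the Coulson--Perarnau theorem as a black box --- that is an existence statement, not a pseudorandomness statement --- and you neither define the needed property precisely nor show that it can be obtained and propagated through the $r$ rounds while keeping each $A_j'$ super-regular; without it, deleting from $A_j'$ every edge carrying a colour in $S$ can, as you yourself compute, wipe out a constant fraction of a vertex's neighbourhood. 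Second, and this you do not flag at all, a \emph{single} matching edge $xv\in\sigma_j$ can already produce a clash: if $x$ has two already-embedded $H$-neighbours $x'$ and $x''$ that are not both in $X_0$, nothing prevents $c(\psi(x')v)=c(\psi(x'')v)$. Hypothesis~(v) only covers the case $x',x''\in X_0$, and because you define $\tilde c_j(xv)$ as a \emph{set}, two equal colours are silently merged into one element, so requiring $\sigma_j$ to be rainbow for $\tilde c_j$ cannot detect them. Such edges $xv$ must be removed from the candidacy graph before seeking a rainbow matching, and bounding that removal at each $v$ runs into the same concentration difficulty as the first gap.

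In short, the skeleton is right but the heart of the argument --- showing that after removing every candidacy edge that would create a cross-round or within-edge clash, the surviving graph is still super-regular with an appropriately bounded colouring --- is precisely what you leave open. That step cannot be obtained by quoting Coulson--Perarnau for the matching alone; one has to track, together, the super-regularity of the evolving candidacy graphs and the per-vertex statistics of both the inherited colouring and the used-colour set, and it is this bookkeeping (not the final appeal to a rainbow perfect matching) that genuinely uses the $\mu n$-boundedness and the hierarchy $1/n\ll\eps,\mu\ll d,1/r,1/\Delta$.
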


\COMMENT{Lemma 5.2. In the original statement we have $1/n\ll\mu\ll\eps\ll d,1/r$. I changed this to $1/n\ll\eps,\mu\ll d,1/r$, compare Lemma 5.1.}

\section{Colour splitting}\label{sec:colour splitting}
The goal of this section is to provide some useful lemmas to refine the partitions of a blow-up instance and split the colours into groups in order to obtain better control for the rainbow embedding.

The first lemma will guarantee that with high probability the resulting graph is still super-regular when we randomly split colours in order to obtain a colour-split colouring.

\begin{lemma}\label{lem:regularity colour splitting}
Let $1/n\ll\eps\ll\eps'\ll\gamma,d,1/\Lambda$. 
Suppose $G$ is an $(\eps,d)$-super-regular graph with vertex partition $(A,B)$ such that $|A|,|B|=(1\pm\eps)n$, and $c\colon E(G)\to C$ is a locally $\Lambda$-bounded edge-colouring of $G$.
Suppose $\{Y_\alpha\colon \alpha\in C \}\cup\{Z_e\colon e\in E(G) \}$ is a set of mutually independent Bernoulli random variables such that $\prob{Y_{c(e)}+Z_e=2}=\gamma$ for every $e\in E(G)$.\COMMENT{The variables $Y_\alpha$ and $Z_e$ describe a two-phase random process, where first each colour is kept independently at random according to $Y_\alpha$, and second each edge is kept independently at random according to $Z_e$.}
Suppose $G'$ is the random spanning subgraph of $G$ where $e\in E(G)$ belongs to $E(G')$ whenever $Y_{c(e)}+Z_e=2$.
Then $G'$ is $(\eps',\gamma d)$-super-regular with probability at least $1-1/n^{10}$.
\end{lemma}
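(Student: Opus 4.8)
The plan is to verify the two defining properties of $(\eps',\gamma d)$-super-regularity for $G'$ separately and then take a union bound. Throughout I will condition on the outcomes of the colour variables $\{Y_\alpha\}_{\alpha\in C}$ only when convenient, but the cleanest route is to treat the random choice as a single two-phase process: an edge $e$ survives iff its colour is kept ($Y_{c(e)}=1$) and the edge itself is kept ($Z_e=1$), and these are mutually independent across the relevant index sets so that $\pr[e\in E(G')]=\gamma$. Note, however, that the events $\{e\in E(G')\}$ are \emph{not} independent, since edges sharing a colour are positively correlated; this is exactly why the local $\Lambda$-boundedness hypothesis is needed, and it is the crux of the argument.

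First I would handle the degree condition. Fix $v\in A$ (the case $v\in B$ is symmetric) and consider $\dg_{G'}(v)=\sum_{u\in N_G(v)}\mathbbm{1}[Y_{c(vu)}Z_{vu}=1]$. We have $\ex[\dg_{G'}(v)]=\gamma\dg_G(v)=(\gamma d\pm 2\eps\gamma^{-1}\text{-ish})n$, well within $(\gamma d\pm\eps'/2)n$. For concentration, observe that $\dg_{G'}(v)$ is a function of the independent variables $\{Y_\alpha:\alpha\text{ appears at }v\}\cup\{Z_e:v\in e\}$; changing one $Z_e$ changes $\dg_{G'}(v)$ by at most $1$, and changing one $Y_\alpha$ changes it by at most $\dg_G^\alpha(v)\le\Lambda$. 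Since at most $\dg_G(v)\le n$ colours and $\dg_G(v)\le n$ edges are involved, McDiarmid's inequality (Theorem~\ref{thm:McDiarmid}, with $B=\Lambda$ and $\sum b_i\le n+\Lambda n=O_\Lambda(n)$) gives $\pr[|\dg_{G'}(v)-\ex[\dg_{G'}(v)]|\ge (\eps'/2)n]\le 2\exp(-\Omega_{\Lambda}(n))$, which beats $1/n^{12}$ comfortably for large $n$. A union bound over the at most $2n$ vertices handles all degrees with failure probability $o(n^{-10})$.

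Next the regularity condition. Rather than checking density of every pair $(S,T)$ directly, I would apply Theorem~\ref{thm: almost quasirandom}: it suffices to show that whp at least $(1-5\eps')|A|^2/2$ pairs $u,u'\in A$ satisfy $\dg_{G'}(u),\dg_{G'}(u')\ge(\gamma d-\eps')|B|$ and $|N_{G'}(u,u')|\le(\gamma d+\eps')^2|B|$; symmetrically for $B$, and then feed this into the definition. The degree part is already covered by the first step (in fact \emph{all} degrees are right). For the codegree part, fix $u,u'\in A$ with $|N_G(u,u')|=(d\pm\eps)^2|B|+O(\eps n)$ — which holds for all but $O(\eps n^2)$ pairs by $\eps$-regularity of $G[A,B]$ (Fact~\ref{fact:regularity} applied twice, or directly) — and write $|N_{G'}(u,u')|=\sum_{w\in N_G(u,u')}\mathbbm{1}[w\in N_{G'}(u)]\mathbbm{1}[w\in N_{G'}(u')]$. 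Here $w$ contributes iff $Y_{c(uw)}=Y_{c(u'w)}=1$ and $Z_{uw}=Z_{u'w}=1$; since the colouring is locally $\Lambda$-bounded and proper enough, the colours $c(uw),c(u'w)$ are typically distinct for distinct $w$, so the dominant contribution to $\ex[|N_{G'}(u,u')|]$ is $\gamma^2|N_G(u,u')|=(\gamma d\pm\eps')^2|B|$, with a lower-order correction of size $O(\Lambda\cdot\text{(number of repeated colours)})=O(\Lambda^2)=O(1)$ coming from pairs $w,w'$ with $c(uw)=c(uw')$ or similar. For concentration of $|N_{G'}(u,u')|$ I would again use McDiarmid: changing a single $Z_e$ moves it by at most $1$ (at most $2|B|$ such variables are relevant), and changing a single $Y_\alpha$ moves it by at most the number of $w$ with $\alpha\in\{c(uw),c(u'w)\}$, which is at most $2\Lambda$; so with $\le 2n$ colour variables and $B=2\Lambda$ we get deviation probability $\le 2\exp(-\Omega_\Lambda(n))$. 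A union bound over the $O(n^2)$ good pairs $u,u'$ (and the symmetric ones in $B$) then shows that whp \emph{every} good pair is simultaneously good in $G'$, hence at least $(1-5\eps')|A|^2/2$ pairs qualify, and Theorem~\ref{thm: almost quasirandom} yields $(\eps'^{1/6}\text{-type},\gamma d)$-regularity; rescaling the constants ($\eps\ll\eps'$) absorbs the exponent and delivers $(\eps',\gamma d)$-regularity.

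Finally, combining the degree bounds from the first step with the regularity from the third, and taking a union bound over all the $O(n^2)$ bad events (each of probability $2\exp(-\Omega_\Lambda(n))\ll n^{-12}$), shows that $G'$ is $(\eps',\gamma d)$-super-regular with probability at least $1-n^{-10}$, as claimed. \textbf{The main obstacle} is the positive correlation among edges of the same colour: a naive Chernoff bound treating edge-survival as independent is simply false, and the entire role of the $\Lambda$-boundedness hypothesis is to bound the Lipschitz coefficients of the colour variables $Y_\alpha$ (each by $\Lambda$ for degrees, by $2\Lambda$ for codegrees) so that McDiarmid still gives exponential concentration; one must also be a little careful that the number of relevant independent variables stays $O_\Lambda(n)$ so the exponent does not degrade, and that the small $O(\Lambda^2)$ bias in the codegree expectation from repeated colours is genuinely lower-order and gets swallowed by the slack between $\eps$ and $\eps'$.
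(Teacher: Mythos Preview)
Your overall architecture---McDiarmid for degrees, McDiarmid for codegrees, then Theorem~\ref{thm: almost quasirandom}---is exactly the paper's approach, but there is a genuine gap in your codegree step. You assert that the bias in $\ex[|N_{G'}(u,u')|]$ coming from vertices $w$ with $c(uw)=c(u'w)$ is $O(\Lambda^2)=O(1)$. This is false: for a \emph{fixed} pair $u,u'$, the number of such $w$ (monochromatic $2$-paths $uwu'$) can be as large as $|N_G(u,u')|=\Theta(n)$ even when the colouring is proper ($\Lambda=1$); nothing prevents $c(uw)=c(u'w)$ for every common neighbour $w$. Each such $w$ contributes $\gamma^2/p_{c(uw)}$ rather than $\gamma^2$ to the expectation, so the bias can be $\Theta(n)$ for individual pairs and your claimed expectation $(\gamma d\pm\eps')^2|B|$ need not hold.

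The fix, which is the paper's key additional idea, is to strengthen the notion of ``good pair'' to include the requirement $|\{w\in N_G(u,u')\colon c(uw)=c(u'w)\}|\le\eps|B|$, and then to show by an \emph{averaging} argument that almost all pairs are good: the total number of monochromatic $2$-paths with both ends in $A$ is $\sum_{w\in B}\sum_{\alpha}\binom{\dg_G^\alpha(w)}{2}\le \Lambda|A||B|$ (using local $\Lambda$-boundedness), so at most $\Lambda|A||B|/(\eps|B|)\le\eps|A|^2$ pairs fail this condition. For good pairs the expectation is then genuinely $\gamma^2 d^2 n\pm O(\eps)n$, and your McDiarmid argument goes through. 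Without this counting step the proof does not close.
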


\begin{proof}
We call a pair of distinct vertices $u,v\in A$ \emph{good} if $|N_G(u,v)|=(d\pm\eps)^2|B|$, and $|\{w\in N_G(u,v)\colon c(uw)= c(vw) \}|\leq\eps|B|$.
We first claim that almost all pairs are good.
\begin{claim}
There are at least $(1-7\eps)|A|^2/2$ good pairs $u,v\in A$.
\end{claim}

\claimproof
Since $G$ is $(\eps,d)$-super-regular, at most $2\eps|A|^2$ pairs $u,v\in A$ do not satisfy $|N_G(u,v)|=(d\pm\eps)^2|B|$ by Fact~\ref{fact:regularity}.\COMMENT{$|A|$ choices for $u$. Let $Y=N_G(u)$. $|Y|=(d\pm \eps)|B|$. All but $2\eps|A|$ vertices $v$ have $(d\pm \eps)|Y|=(d\pm\eps)^2|B|$ neighbours in $Y$. Could divide by two since each bad pair is counted twice...}

We claim that the number of pairs $u,v\in A$ with $|\{w\in N_G(u,v)\colon c(uw)= c(vw) \}|\ge \eps|B|$ is at most $\eps |A|^2$.
For this, we first count the number of monochromatic paths of length~$2$ in $G$ with both ends in $A$. Each vertex $w\in B$ is 
contained in $\sum_{\alpha\in C}\binom{\dg^\alpha_G(w)}{2}$ monochromatic paths $uwv$ in $G$.
Since $\dg^\alpha_G(w)\leq\Lambda$ for every colour $\alpha\in C$ and $\sum_{\alpha\in C}\dg^\alpha_G(w)\leq |A|$, we have
\begin{align*}
\sum_{\alpha\in C}\binom{\dg^\alpha_G(w)}{2}
\leq \sum_{\alpha\in C}\dg^\alpha_G(w)^2
\leq \Lambda |A|.
\end{align*}
Hence, there are at most $\Lambda |A||B|$ monochromatic paths of length 2 in $G$ with both ends in~$A$. 
This implies that the number of pairs $u,v\in A$ with $|\{w\in N_G(u,v)\colon c(uw)= c(vw) \}|\ge \eps|B|$ is at most $$\frac{\Lambda|A||B|}{\eps |B|}\leq \eps |A|^2.$$ 

Thus, there are at least 
$\binom{|A|}{2}-3\eps|A|^2\geq (1-7\eps)|A|^2/2$ 
good pairs $u,v\in A$.
\endclaimproof

We fix a vertex $x\in A\cup B$ and a good pair of vertices $u,v\in A$. 
Let $X_x:=\dg_{G'}(x)$ and $X_{u,v}:=|N_{G'}(u,v)|$.
Clearly, $X_x$ and $X_{u,v}$ are determined by $\{Y_\alpha\colon\alpha\in C\}\cup\{Z_e\colon e\in E(G) \}$. Note that if $w\in N_G(u,v)$ satisfies $c(uw)\neq c(vw)$, then $\prob{w\in N_{G'}(u,v)}=\gamma^2$. Thus, we have
\begin{align}\label{eq:expectations}
\expn{X_x}=\gamma \dg_G(x) = \gamma d n \pm 3\eps n \text{~~~and~~~} \expn{X_{u,v}}=\gamma^2 d^2 n \pm 10 \eps n.
\end{align}
\COMMENT{$\gamma \dg_G(x)=\gamma (d\pm \eps)(1\pm \eps)n$, $\expn{X_{u,v}} = \gamma^2|\{w\in N_G(u,v)\colon c(uw)\neq c(vw) \}| \pm |\{w\in N_G(u,v)\colon c(uw)= c(vw) \}| = \gamma^2 (|N_G(u,v)|\pm \eps |B|) \pm \eps |B| = \gamma^2 (d\pm \eps)^2 |B| \pm 2\eps |B| = \gamma^2 (d^2\pm 3\eps)|B| \pm 2\eps |B|$}
For all $\alpha\in C$ and $e\in E(G)$, let $b_\alpha$ and $b_e$ be minimally chosen such that changing the outcome of $Y_\alpha$ changes $X_x$ by at most $b_\alpha$, and changing the outcome of $Z_e$ changes $X_x$ by at most~$b_e$. Note that
\begin{align*}
\sum_{\alpha\in C}b_\alpha+\sum_{e\in E(G)}b_e \leq 2\dg_G(x)\le 3n.
\end{align*}
Moreover, we clearly have $b_e\leq 1$, and since the colouring $c$ is locally $\Lambda$-bounded, $b_\alpha\leq\Lambda$.
Using McDiarmid's inequality (Theorem~\ref{thm:McDiarmid}), we obtain that 
\begin{align}\label{eq:McDiarmid X_x}
\prob{\left|X_x-\expn{X_x}\right|>\eps n}\leq 2\exp\left(-\frac{\eps^2n^2}{\Lambda \cdot 3n}\right)
<\frac{1}{n^{20}}.
\end{align}
With similar arguments one can show that
\begin{align}\label{eq:McDiarmid X_u,v}
\prob{\left|X_{u,v}-\expn{X_{u,v}}\right|>\eps n}
<\frac{1}{n^{20}}.
\end{align}
A union bound over all $x\in A\cup B$ and all good pairs $u,v\in A$ yields together with~\eqref{eq:expectations}, \eqref{eq:McDiarmid X_x} and~\eqref{eq:McDiarmid X_u,v} that with probability at least $1-1/n^{10}$, we have $\deg_{G'}(x)=\gamma d n \pm 4\eps n$  for all $x\in A\cup B$, and
$|N_{G'}(u,v)|=\gamma^2 d^2 n \pm 11 \eps n$ for all good pairs $u,v\in A$.
Given that, Theorem~\ref{thm: almost quasirandom} implies that $G'$ is $(\eps',\gamma d)$-super-regular.
\end{proof}

The next lemma states that we can split the colours of the host graph $G$ into groups and obtain a subgraph $G'$ which is still super-regular, and whose colouring is colour-split and appropriately bounded.

\begin{lemma}\label{lem:colour splitting}
Let $1/n\ll\eps\ll\eps'\ll d'\ll\gamma\ll d,1/\Lambda,1/r,1/\Delta$. 
Suppose $(H,G, (X_i)_{i\in[r]}, (V_i)_{i\in[r]})$ is an $(\eps,d)$-super-regular blow-up instance.
Assume further that 
\begin{enumerate}[label={\rm (\roman*)}]
	\item\label{Delta(H)} $\Delta(H)\leq \Delta$ and $e_H(X_i,X_j)\geq\gamma^2n$ for all $ij\in \binom{[r]}{2}$;
	\item\label{size V_i} $|V_i|=(1\pm\eps)n$ for all $i\in[r]$;
	\item\label{sum boundedness} $c\colon E(G)\to C$ is locally $\Lambda$-bounded and the following holds for all $\alpha\in C$:
	$$\sum_{ij\in \binom{[r]}{2}}e^\alpha_G(V_i,V_j)e_H(X_i,X_j)\leq (1-\gamma)dn^2.$$
\end{enumerate}
Then there exists a spanning subgraph $G'$ of $G$ such that 
\begin{enumerate}[label={\rm (\alph*)}]
\item\label{G' blow-up instance} $(H,G', (X_i)_{i\in[r]}, (V_i)_{i\in[r]})$ is an $(\eps',d')$-super-regular blow-up instance;
\item $c$ restricted to $G'$ is colour-split;
\item\label{c G' bounded} $c$ restricted to $G'[V_i,V_j]$ is $(1-\frac{\gamma}{2})\frac{e_{G'}(V_i,V_j)}{e_H(X_i,X_j)}$-bounded for all $ij\in\binom{[r]}{2}$.
\end{enumerate}
\end{lemma}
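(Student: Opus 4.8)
The plan is to assign each colour independently to one of the $\binom{r}{2}$ pairs (with an appropriate probability profile depending on the pair), then additionally sparsify each edge to bring the density down to exactly $d'$, and finally apply Lemma~\ref{lem:regularity colour splitting} to check that super-regularity survives and a concentration argument to check the boundedness condition~\ref{c G' bounded}.

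Here is how I would carry this out. First, I would fix, for each pair $ij\in\binom{[r]}{2}$, a target ``keep probability'' $q_{ij}$. A natural choice: since $e_G(V_i,V_j)=(d\pm\eps)|V_i||V_j|=(d\pm o(1))n^2$, condition~\ref{sum boundedness} rewritten as $\sum_{ij}\frac{e^\alpha_G(V_i,V_j)e_H(X_i,X_j)}{dn^2}\le 1-\gamma$ suggests routing colour $\alpha$ to pair $ij$ with probability proportional to $\frac{e_H(X_i,X_j)}{dn^2}$ — but to keep things uniform across colours I would instead set $q_{ij}:=\lambda\cdot\frac{e_H(X_i,X_j)}{dn^2}$ for a small constant $\lambda$ (to be chosen with $\gamma\gg d'\gg 1/n$, roughly $\lambda\asymp d'/d$), and assign each colour $\alpha$ to pair $ij$ with probability $q_{ij}$, these choices independent across colours (note $\sum_{ij}q_{ij}\le\lambda(1-\gamma)/(1-o(1))<1$ since $\sum_{ij}e_H(X_i,X_j)=e(H)\le\Delta n/2$, so possibly rescale $\lambda$; a colour may also be assigned to no pair, which is fine). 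This is exactly the two-phase process in Lemma~\ref{lem:regularity colour splitting}: phase one keeps colour $\alpha$ with probability $\sum$ of the $q_{ij}$ it could land in, restricted to the relevant pair — more precisely, for the pair $ij$, set $Y_\alpha=\mathbbm{1}[\alpha\text{ routed to }ij]$ and let $Z_e$ be an extra independent thinning on $e\in E(G[V_i,V_j])$ with success probability $p$ chosen so that the resulting density is $d'$, i.e. $q_{ij}\cdot d\cdot p=d'$, hence $p=d'/(q_{ij}d)=1/\lambda$. Apply Lemma~\ref{lem:regularity colour splitting} to each $G[V_i,V_j]$ with $\gamma$ there replaced by $q_{ij}p=d'/d$: since there are only $\binom{r}{2}=O(1)$ pairs, a union bound over the failure probabilities $1/n^{10}$ gives that \ref{G' blow-up instance} holds whp. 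Colour-splitness~(b) is immediate from the construction, since every edge of a given colour in $G'$ lies in the single pair that colour was routed to.

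For~\ref{c G' bounded}: fix $\alpha$ and $ij$. If $\alpha$ was not routed to $ij$, $e^\alpha_{G'}(V_i,V_j)=0$ and there is nothing to check. If it was, then $e^\alpha_{G'}(V_i,V_j)$ is a sum of independent Bernoulli$(p)$ variables indexed by the $e^\alpha_G(V_i,V_j)$ edges of colour $\alpha$ in $G[V_i,V_j]$, so $\expn{e^\alpha_{G'}(V_i,V_j)}=p\,e^\alpha_G(V_i,V_j)$. I want this, plus a Chernoff deviation, to be at most $(1-\tfrac{\gamma}{2})\frac{e_{G'}(V_i,V_j)}{e_H(X_i,X_j)}$. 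Now $e_{G'}(V_i,V_j)=(d'\pm\eps')n^2$ whp (from~(a)), and $\frac{e_{G'}(V_i,V_j)}{e_H(X_i,X_j)}\ge\frac{(d'-\eps')n^2}{e_H(X_i,X_j)}$, while $p\,e^\alpha_G(V_i,V_j)=\frac{d'}{q_{ij}d}e^\alpha_G(V_i,V_j)=\frac{n^2}{\lambda\cdot e_H(X_i,X_j)}\cdot\frac{d'}{d}\cdot e^\alpha_G(V_i,V_j)$; using that by~\ref{sum boundedness} the single term $e^\alpha_G(V_i,V_j)e_H(X_i,X_j)\le(1-\gamma)dn^2$, so $e^\alpha_G(V_i,V_j)\le\frac{(1-\gamma)dn^2}{e_H(X_i,X_j)}$, one gets $p\,e^\alpha_G(V_i,V_j)\le(1-\gamma)\cdot\frac{d'n^2}{q_{ij}d\cdot e_H(X_i,X_j)}\cdot\frac{e_H(X_i,X_j)}{n^2}\cdot\frac{n^2}{e_H(X_i,X_j)}$ — after simplification the factor $(1-\gamma)$ survives against the $(1-\gamma/2)$ and $d'\mp\eps'$ slack, giving the bound with room to spare for a Chernoff error term of size $O(\sqrt{n}\log n)$, which is absorbed since $e_H(X_i,X_j)\ge\gamma^2 n$ by~\ref{Delta(H)} forces the relevant quantities to be $\Omega(n)$ (here I'd use $d'\ll\gamma$ to win the constant fight, and $e^\alpha_G(V_i,V_j)\le\Lambda n$ locally to ensure the expectation is at least polynomial in $n$ when nonzero, or else it is $O(1)$ and trivially bounded). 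A union bound over the $|C|\le n^2$ colours and $O(1)$ pairs finishes it, since each Chernoff bound beats $1/n^{10}$.

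The main obstacle is the bookkeeping in the last paragraph: getting the constants to line up so that the $(1-\gamma)$ coming from hypothesis~\ref{sum boundedness} genuinely dominates the $(1-\gamma/2)$ target \emph{after} accounting for the $\pm\eps'$ fluctuation in $e_{G'}(V_i,V_j)$, the Chernoff error in $e^\alpha_{G'}(V_i,V_j)$, and the $\pm\eps$ slack in $e_G(V_i,V_j)\approx dn^2$ — this is why the hierarchy $1/n\ll\eps\ll\eps'\ll d'\ll\gamma\ll d,1/\Lambda,1/r,1/\Delta$ is set up as it is, and the proof is essentially a verification that each inequality in the chain is used exactly where needed. A secondary subtlety is ensuring $\sum_{ij}q_{ij}<1$ so that the routing is a valid probability distribution (using $\sum_{ij}e_H(X_i,X_j)=e(H)$ and $\Delta(H)\le\Delta$), and that $q_{ij}>0$, which is where the lower bound $e_H(X_i,X_j)\ge\gamma^2 n$ in~\ref{Delta(H)} is needed — without it a pair with no $H$-edges would get probability $0$ and the argument for that pair is vacuous anyway, but the lower bound also guarantees the denominators in~\ref{c G' bounded} are never too small.
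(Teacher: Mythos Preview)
There is a genuine gap: your routing probabilities $q_{ij}$ are \emph{colour-independent}, and this cannot work. Suppose $r\ge 3$ and consider a colour $\alpha$ whose edges lie entirely in $G[V_1,V_2]$; condition~\ref{sum boundedness} then says only that $e^\alpha_G(V_1,V_2)e_H(X_1,X_2)\le(1-\gamma)dn^2$. If $\alpha$ happens to be routed to $(1,2)$, its edges are thinned only by the second-phase factor $p_{12}$, whereas $e_{G'}(V_1,V_2)\approx q_{12}p_{12}dn^2$ drops by the full factor $q_{12}p_{12}$. Your target~\ref{c G' bounded} then reduces (after cancelling $p_{12}$) to $e^\alpha_G(V_1,V_2)e_H(X_1,X_2)\le(1-\gamma/2)\,q_{12}\,dn^2$, which forces $q_{12}\ge(1-\gamma)/(1-\gamma/2)$. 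Applying the same argument to colours concentrated in the other pairs forces every $q_{ij}\ge(1-\gamma)/(1-\gamma/2)>1/2$, contradicting $\sum_{ij}q_{ij}\le 1$. In other words, relying on the single-term consequence of~\ref{sum boundedness} (as you explicitly do) throws away exactly the averaging that makes the lemma true. (Your side computation $p=d'/(q_{ij}d)=1/\lambda$ is also wrong --- that would require $e_H(X_i,X_j)=d'n^2$ --- but this is secondary.)

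The paper's proof supplies the two missing ideas. First, it sparsifies each pair with probability $p_{ij}=e_H(X_i,X_j)/(2\Delta n)$ \emph{before} any colour routing, so that the target ratio $e_{G_1}(V_i,V_j)/e_H(X_i,X_j)$ becomes the same constant ($\approx dn/(2\Delta)$) for every pair; the weighted constraint~\ref{sum boundedness} then collapses to a bound on the unweighted total $e^\alpha(G_1)\lesssim(1-\gamma)dn/(2\Delta)$. Second, it routes $\alpha$ to pair $ij$ with a \emph{colour-dependent} probability $q_{ij}^\alpha$ proportional to $e^\alpha_{G_1}(V_i,V_j)$ (on pairs where $\alpha$ is not too sparse; otherwise $q_{ij}^\alpha:=\gamma^2$), and then thins each surviving edge $e$ by the compensating factor $\gamma^2/q_{ij}^{c(e)}$ so that the overall edge-survival probability is a uniform $\gamma^2$ --- this is precisely what is needed for Lemma~\ref{lem:regularity colour splitting} to apply. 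The payoff is that conditional on $\alpha$ landing in $ij$ one gets $e^\alpha_{G_2}(V_i,V_j)\lesssim\gamma^2 e^\alpha(G_1)$, the same for every pair and controlled by the \emph{total} rather than by any single term.
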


\begin{proof}
Let $\heps$ be such that $\eps\ll\heps\ll\eps'$.
The proof proceeds in three steps, where we iteratively define spanning subgraphs $G_3\subseteq G_2\subseteq G_1\subseteq G$ such that $G_3$ satisfies the required properties of $G'$ in the statement.

In the first step we suitably sparsify each bipartite subgraph $G[V_i,V_j]$.
For every $ij\in \binom{[r]}{2}$, let
\begin{align}\label{eq:def p_ij}
p_{ij}:=\frac{e_H(X_i,X_j)}{2\Delta n}.
\end{align}
Note that $\gamma^2/(2\Delta)\leq p_{ij}\leq 1$ since $\gamma^2 n\le e_H(X_i,X_j) \le \Delta |X_i|\le 2\Delta n$.
For every $ij\in \binom{[r]}{2}$, we keep each edge of $G[V_i,V_j]$ independently at random with probability $p_{ij}$ and denote the resulting graph by~$G_1[V_i,V_j]$.
A simple application of Chernoff's inequality together with a union bound yields the following claim.
\begin{claim}\label{claim G_1}
The following properties hold simultaneously with probability at least $1-1/n$ for every $ij\in \binom{[r]}{2}$.
\begin{enumerate}[label={\rm (C1.\arabic*)}]
\item\label{G_1 super-regular} $G_1[V_i,V_j]$ is $(2\eps,p_{ij}d)$-super-regular;
\item\label{e_G_1 alpha} $e^\alpha_{G_1}(V_i,V_j)\leq e^\alpha_{G}(V_i,V_j)p_{ij}+ \eps n$ for every colour $\alpha\in C$.
\end{enumerate}
\end{claim}

\COMMENT{We can apply Chernoff's inequality to the set $\{Z_e\colon e\in E(G[V_i,V_j])\}$ of independent Bernoulli random variables where $Z_e$ indicates whether $e\in E(G[V_i,V_j])$ is present in $G_1[V_i,V_j]$. 
It holds $$\dg_{G_1[V_i,V_j]}(v)=\sum_{v\in e} Z_e$$
and 
$$e_{G_1}(S,T)=\sum_{st\in E(G),s\in S,t\in T} Z_{st},$$
for $S\subseteq V_i$, $T\subseteq V_j$. 
Hence, Chernoff's inequality yields the required concentration since the random variables $Z_e$ are independent. This implies the claim.
}

Hence, by Claim~\ref{claim G_1}, we may assume that $G_1$ is a spanning subgraph of $G$ such that properties~\ref{G_1 super-regular}--\ref{e_G_1 alpha} hold.
For every colour $\alpha\in C$, we obtain that
\begin{align}\label{eq:sum boundedness G_1}
&\sum_{ij\in \binom{[r]}{2}}e^\alpha_{G_1}(V_i,V_j)\frac{e_H(X_i,X_j)}{e_{G_1}(V_i,V_j)}
\stackrel{\ref{G_1 super-regular},\ref{e_G_1 alpha}}{\leq}
\sum_{ij\in \binom{[r]}{2}}(e^\alpha_{G}(V_i,V_j)p_{ij}+\eps n)\frac{e_H(X_i,X_j)}{(1-\sqrt{\eps}) p_{ij} d n^2}
\notag \\
&\stackrel{\hphantom{\ref{sum boundedness},\eqref{eq:def p_ij}}}{\leq}(1+2\sqrt{\eps})\sum_{ij\in \binom{[r]}{2}}e^\alpha_{G}(V_i,V_j)\frac{e_H(X_i,X_j)}{d n^2}
+\eps n \sum_{ij\in \binom{[r]}{2}}\frac{2e_H(X_i,X_j)}{p_{ij} d n^2}
\notag \\
&\stackrel{\ref{sum boundedness},\eqref{eq:def p_ij}}{\leq}
(1+2\sqrt{\eps})(1-\gamma)+\eps n \binom{r}{2}\frac{4\Delta n}{dn^2}
\leq 1-\gamma+3\sqrt{\eps} \le
1-\frac{3\gamma}{4}.
\end{align}

Note that~\eqref{eq:def p_ij} and~\ref{G_1 super-regular} imply that 
\begin{align}\label{eq:sparsify G_1 holds}
\frac{e_{G_1}(V_i,V_j)}{e_H(X_i,X_j)}= \frac{(d\pm\sqrt{\eps})n}{2\Delta}.
\end{align}
\COMMENT{$e_{G_1}(V_i,V_j)=(p_{ij}d\pm 2\eps)(1\pm \eps)^2 n^2=p_{ij}(d\pm \sqrt{\eps})n^2$}
Hence, for every colour $\alpha\in C$, we obtain
\begin{align}\label{eq:e^alpha(G_1)}
e^\alpha(G_1)
=\sum_{ij\in \binom{[r]}{2}}e^\alpha_{G_1}(V_i,V_j)
&\stackrel{\eqref{eq:sparsify G_1 holds}}{\leq} \frac{(d+\sqrt{\eps})n}{2\Delta} \sum_{ij\in \binom{[r]}{2}}e^\alpha_{G_1}(V_i,V_j)\frac{e_H(X_i,X_j)}{e_{G_1}(V_i,V_j)}
\notag\\
&\stackrel{\eqref{eq:sum boundedness G_1}}{\leq} \left(1-\frac{3\gamma}{4}\right)\frac{(d+\sqrt{\eps})n}{2\Delta}.
\end{align}

In the next step we define a random subgraph $G_2\subseteq G_1$. 
This will ensure that the final colouring is colour-split.
We choose $\tau\colon C\to\binom{[r]}{2}$ where each $\tau(\alpha)$ is chosen independently at random according to some probability distribution $(q_{ij}^\alpha)_{ij\in\binom{[r]}{2}}$, 
and for each $ij\in\binom{[r]}{2}$ and each edge $e$ of $G_1[V_i,V_j]$, let $Z_e$ be a Bernoulli random variable with parameter $\gamma^2/q_{ij}^{c(e)}$, all independent and independent of the choice of $\tau$.
Define $G_2$ by keeping each edge $e\in E_{G_1}(V_i,V_j)$ if $\tau(c(e))=ij$ and $Z_e=1$.
Hence,
\begin{align}\label{eq:edge survival}
\text{for all $e\in E(G_1)$, we have $\prob{e\in E(G_2)}
=\gamma^2$.}
\end{align}

We define $q_{ij}^\alpha$ as follows. For all $\alpha\in C$, let
\begin{align}\label{eq:I^alpha}
\cI^\alpha:=\left\{ij\in \binom{[r]}{2} \colon e^\alpha_{G_1}(V_i,V_j)>\frac{\gamma^2 e^\alpha(G_1)}{1-\binom{r}{2}\gamma^2}\right\} 
\text{~~~and~~~}
\cbI:=\binom{[r]}{2}\sm\cI^\alpha.
\end{align}
For $ij\in\cbI$, we set $q_{ij}^\alpha:=\gamma^2$.
For $ij\in\cI^\alpha$, we set\COMMENT{We want to make sure that~\eqref{eq:edge survival} holds, that is, each edge survives at least with probability $\gamma^2$. We have to do that in order to make sure that also bipartite pairs where many colours appear only sparely receive sufficiently many colours. 
Therefore, we set $q_{ij}^\alpha:=\gamma^2$ for $ij\in\cbI$.
For pairs $ij\in\cI^\alpha$ where colour $\alpha$ appears sufficiently many times we want to assign $\alpha$ to $ij$ according to the overall ratio of the appearance of $\alpha$ in $G$, that is, 
$$\frac{e^\alpha_{G_1}(V_i,V_j)}{\sum_{i'j'\in\cbI}e^\alpha_{G_1}(V_{i'},V_{j'})}.$$ In order to obtain a probability distribution $(q_{ij}^\alpha)_{ij\in\binom{[r]}{2}}$ we have to scale this and set $$q_{ij}^\alpha:=\left(1-\big|\cbI\big|\gamma^2\right)\frac{e^\alpha_{G_1}(V_i,V_j)}{\sum_{i'j'\in\cI^\alpha}e^\alpha_{G_1}(V_{i'},V_{j'})}$$ for $ij\in\cI^\alpha$.
}
\begin{align}\label{eq:p_ij^a}
q_{ij}^\alpha:=\left(1-\big|\cbI\big|\gamma^2\right)\frac{e^\alpha_{G_1}(V_i,V_j)}{\sum_{i'j'\in\cI^\alpha}e^\alpha_{G_1}(V_{i'},V_{j'})}.
\end{align}
Note that $\gamma^2\leq q_{ij}^\alpha\leq 1$ for all $ij\in\binom{[r]}{2}$, and $\sum_{ij\in\binom{[r]}{2}}q_{ij}^\alpha=1$.
\COMMENT{Note that if $ij\in \cbI$, then $e^\alpha_{G_1}(V_i,V_j)>0$, hence we do not divide by $0$.}

\begin{claim}\label{claim G_2}
The following properties hold simultaneously with probability at least $1-1/n$ for every $ij\in \binom{[r]}{2}$ and every colour $\alpha\in C$.
\begin{enumerate}[label={\rm (C2.\arabic*)}]
\item\label{G_2 super-regular} $G_2[V_i,V_j]$ is $(\heps,\gamma^2p_{ij}d)$-super-regular;
\item\label{nr colours G_2} $e^\alpha_{G_2}(V_i,V_j)\leq\frac{\gamma^2}{q_{ij}^\alpha}e^\alpha_{G_1}(V_i,V_j)+\eps n.$
\end{enumerate}
\end{claim}
\claimproof
For every $ij\in\binom{[r]}{2}$,
by~\eqref{eq:edge survival} and~\ref{G_1 super-regular}, Lemma~\ref{lem:regularity colour splitting} with $Y_\alpha=\mathbbm{1}_{\tau(\alpha)=ij}$ and $Z_e$ as defined above implies that~\ref{G_2 super-regular} holds with probability at least $1-1/n^5$.
 
In order to verify~\ref{nr colours G_2}, note that for $ij\in\binom{[r]}{2}$ the colour $\alpha$ appears in $G_2[V_i,V_j]$ only if $\tau(\alpha)=ij$.
Since we keep each $\alpha$-coloured edge independently at random with probability $\gamma^2/q_{ij}^\alpha$, a simple application of Chernoff's inequality yields that \ref{nr colours G_2} holds with probability at least $1-1/n^5$.
\endclaimproof

Hence, by Claim~\ref{claim G_2}, we may assume that $G_2$ is a spanning subgraph of $G_1$ such that properties~\ref{G_2 super-regular} and~\ref{nr colours G_2} hold.
By the construction of $G_2$, the restricted colouring $c|_{E(G_2)}$ is colour-split.

We show that also the required boundedness condition is satisfied, see~\eqref{eq:e^a G_2} below.
For $ij\in\binom{[r]}{2}$, we deduce from \eqref{eq:def p_ij} and~\ref{G_2 super-regular} that
\begin{align}\label{eq:e(G_2)}
e_{G_2}(V_i,V_j)= \gamma^2 p_{ij}(d\pm \heps^{1/2})n^2 \overset{\eqref{eq:def p_ij}}{=}
\big(d\pm \heps^{1/2}\big)\frac{\gamma^2n}{2\Delta}e_H(X_i,X_j).
\end{align}
For a colour $\alpha\in C$ and $ij\in\cbI$, as $G_2\subseteq G_1$, we obtain that
\begin{align}\label{eq:alpha edges in G''}
e^\alpha_{G_2}(V_i,V_j)
\leq e^\alpha_{G_1}(V_i,V_j)
\overset{\eqref{eq:I^alpha}}{\le}\frac{\gamma^2}{1-\binom{r}{2}\gamma^2}e^\alpha(G_1).
\end{align}
For a colour $\alpha\in C$ and $ij\in\cI^\alpha$, we obtain with \ref{nr colours G_2} that
\begin{align}\label{eq:alpha edges in G'' 2}
e^\alpha_{G_2}(V_i,V_j)
&\stackrel{\eqref{eq:p_ij^a}}{\leq} 
\frac{\gamma^2}{1-\vert\cbI\vert\gamma^2}\cdot
e^\alpha_{G_1}(V_i,V_j)
\frac{\sum_{i'j'\in\cI^\alpha}e^\alpha_{G_1}(V_{i'},V_{j'})}{e^\alpha_{G_1}(V_i,V_j)}+\eps n
\notag\\
&\stackrel{\hphantom{\eqref{eq:p_ij^a}}}{\leq} 
\frac{\gamma^2}{1-\binom{r}{2}\gamma^2}\sum_{i'j'\in\binom{[r]}{2}}e^\alpha_{G_1}(V_{i'},V_{j'})+\eps n
=\frac{\gamma^2}{1-\binom{r}{2}\gamma^2} e^\alpha(G_1)+\eps n.
\end{align}
Moreover, for every colour $\alpha\in C$ and every $ij\in\binom{[r]}{2}$, we conclude that
\begin{align*}
&
\frac{\gamma^2}{1-\binom{r}{2}\gamma^2}e^\alpha(G_1)+\eps n
\stackrel{\eqref{eq:e^alpha(G_1)}}{\leq} 
\frac{1-3\gamma/4}{1-\binom{r}{2}\gamma^2} \cdot \frac{\gamma^2n}{2\Delta}(d+\sqrt{\eps})+\eps n
\notag\\\stackrel{\eqref{eq:e(G_2)}}{\leq} &
\frac{1-3\gamma/4}{1-\binom{r}{2}\gamma^2}
\cdot\frac{e_{G_2}(V_i,V_j)}{e_{H}(X_i,X_j)}
\cdot\frac{d+\sqrt{\eps}}{d-\heps^{1/2}}+\eps n
\stackrel{\hphantom{\eqref{eq:e(G_2)}}}{\leq} 
\left(1-\frac{2\gamma}{3}\right)\frac{e_{G_2}(V_i,V_j)}{e_{H}(X_i,X_j)},
\end{align*}
which implies together with~\eqref{eq:alpha edges in G''} and~\eqref{eq:alpha edges in G'' 2} that for every colour $\alpha\in C$ and every $ij\in\binom{[r]}{2}$,
\begin{align}\label{eq:e^a G_2}
e^\alpha_{G_2}(V_i,V_j)\leq \left(1-\frac{2\gamma}{3}\right)\frac{e_{G_2}(V_i,V_j)}{e_{H}(X_i,X_j)}.
\end{align}

Let $G_3$ be a spanning subgraph of $G_2$ where for each bipartite pair $G_2[V_i,V_j]$ we keep each edge independently at random with probability $d'/(\gamma^2 p_{ij}d)$.
As $G_2[V_i,V_j]$ is $(\heps,\gamma^2p_{ij}d)$-super-regular, we may conclude by simple applications of Chernoff's inequality that with probability at least $1-1/n$ for all $ij\in\binom{[r]}{2}$, the graph $G_3[V_i,V_j]$ is $(\eps',d')$-super-regular, and for every colour $\alpha\in C$, we have $$e^\alpha_{G_3}(V_i,V_j)\leq \left(1-\frac{\gamma}{2}\right)\frac{e_{G_3}(V_i,V_j)}{e_{H}(X_i,X_j)}$$
due to~\eqref{eq:e^a G_2}.
Clearly, also $c$ restricted to $G_3$ is colour-split.
Hence, we conclude that there is a spanning subgraph $G_3$ of $G_2$ satisfying properties~\ref{G' blow-up instance}--\ref{c G' bounded}, which implies the statement with $G_3$ playing the role of~$G'$.
\end{proof}

The next lemma states that we can refine the partitions of a blow-up instance $(H,G, (X_i)_{i\in[r]}, (V_i)_{i\in[r]})$ where the edge-colouring of $G$ is colour-split such that $H$ only induces matchings between its refined partition classes and the bipartite pairs of $G$ are still super-regular and colour-split. 
Similar as in the reduction in~\cite{RR:99}, we first apply the Hajnal--Szemer\'edi theorem to $H^2[X_i]$ for each cluster $X_i$ to obtain a refined partition of $H$ where every cluster is now 2-independent. 
Accordingly, we refine the partition of $G$ randomly to preserve the super-regularity. 
Additionally, we partition the colours into disjoint colour sets such that the colouring between the refined partitions of $G$ is still colour-split. 

We first state the classical Hajnal--Szemer\'edi theorem.
\begin{theorem}[\cite{HS:70}] \label{thm:HS}
Let $G$ be a graph on $n$ vertices with $\Delta(G)< k \le n$. Then $V(G)$ can be partitioned into $k$ independent sets of size $\lfloor \frac{n}{k}\rfloor$ or $\lceil \frac{n}{k}\rceil$.
\end{theorem}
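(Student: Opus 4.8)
I would prove the equivalent \emph{equitable colouring} statement: every graph $G$ with $\Delta(G)\le k-1$ admits a proper $k$-colouring in which any two colour classes differ in size by at most one. This is equivalent to the theorem, since classes that pairwise differ by at most one necessarily have size $\lfloor n/k\rfloor$ or $\lceil n/k\rceil$. After a routine reduction (padding with fewer than $k$ isolated vertices, which does not change $\Delta(G)$) I may assume $k\mid n$, say $n=ks$, and aim for all classes to have size exactly~$s$. I would follow the short proof of Kierstead and Kostochka rather than the original argument of Hajnal and Szemer\'edi.

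The outer induction is on $e(G)$; the case $e(G)=0$ is trivial. For the inductive step pick an edge $xy$ and apply the induction hypothesis to $G-xy$ to obtain an equitable $k$-colouring. If $x$ and $y$ lie in different classes, this colouring also works for $G$, so assume they share a class $V_1$. As $y$ has at most $k-1$ neighbours, one of which is $x\in V_1$, some class $V_k\ne V_1$ contains no neighbour of $y$; recolouring $y$ with colour $k$ gives a \emph{proper} $k$-colouring of $G$ that is \emph{near-equitable}: $|V_1|=s-1$, $|V_k|=s+1$, and $|V_i|=s$ for the remaining classes. The whole problem now reduces to the key lemma that a near-equitable proper $k$-colouring can be converted into an equitable one.

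For the key lemma I would try to move one vertex out of the oversized class $V_k$ and into the undersized class $V_1$ via a chain of legal single-vertex moves. Say $w\in V_i$ is \emph{movable to} $V_j$ if $w$ has no neighbour in $V_j$; moving such a $w$ to $V_j$ keeps the colouring proper and affects only the sizes of $V_i$ and~$V_j$. Build the digraph on $\{V_1,\dots,V_k\}$ with an arc $V_i\to V_j$ whenever $V_i$ has a vertex movable to~$V_j$. If this digraph contains a directed path $V_k=A_0\to A_1\to\dots\to A_t=V_1$, perform the corresponding moves in the order $A_0\!\to\!A_1,\ A_1\!\to\!A_2,\ \dots$: each $A_{i+1}$ is untouched until its own move, so properness is maintained throughout, and the net effect is to raise $|V_1|$ by one and lower $|V_k|$ by one, leaving all other classes of size~$s$. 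The colouring is then equitable.

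The genuinely hard case --- the heart of the Hajnal--Szemer\'edi theorem --- is when no such path exists. Letting $\mathcal A$ be the set of classes reachable from $V_k$ and $\mathcal B$ the rest (so $V_1\in\mathcal B$), every vertex of every class in $\mathcal A$ then has a neighbour in \emph{each} class of~$\mathcal B$; a short count gives $|\mathcal B|\ge 2$, but the naive bound on the number of edges between $\bigcup\mathcal A$ and $\bigcup\mathcal B$ does not contradict $\Delta(G)\le k-1$. To push through, one enlarges the repertoire of allowed operations --- for instance also permitting a vertex to be swapped with its unique neighbour in the target class when that neighbour is the only obstruction --- and organizes the reachable classes into a layered structure. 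A careful double-counting of the edges between consecutive layers, using $\Delta(G)<k$ together with the fact that $V_k$ is the \emph{unique} oversized class, then either exhibits a feasible sequence of moves and swaps that rebalances the colouring or yields a contradiction. This last case analysis is where essentially all the difficulty resides; everything else is bookkeeping.
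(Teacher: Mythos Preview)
The paper does not prove this statement at all: Theorem~\ref{thm:HS} is the classical Hajnal--Szemer\'edi theorem, quoted from~\cite{HS:70} and used as a black box (in Lemma~\ref{lem:Hajnal Szemeredi partitioning} and in the proof of Theorem~\ref{thm:quasirandom}). So there is nothing to compare against on the paper's side.

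Your sketch follows the Kierstead--Kostochka short proof, and the reductions (padding to make $k\mid n$, induction on $e(G)$, recolouring $y$ to obtain a near-equitable colouring, and the directed-path argument when one exists) are all correct. The only part that is not actually carried out is the ``genuinely hard case'' when no directed path from $V_k$ to $V_1$ exists: you correctly identify that one needs solo vertices, swaps, and a layered double-counting argument, but you do not execute it. This is precisely where the entire content of the theorem lies, so as written your proposal is an accurate outline rather than a proof. If you intend to supply a full proof you would need to fill in that case (e.g.\ via the Kierstead--Kostochka cascade argument); for the purposes of this paper, simply citing the result as the authors do is appropriate.
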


\begin{lemma}\label{lem:Hajnal Szemeredi partitioning}
Let $1/n\ll\eps\ll\eps'\ll d'\ll\gamma\ll d,1/\Lambda,1/r,1/\Delta$. 
Suppose $(H,G, (X_i)_{i\in[r]}, (V_i)_{i\in[r]})$ is an $(\eps,d)$-super-regular blow-up instance.
Assume further that 
\begin{enumerate}[label={\rm (\roman*)}]
	\item $\Delta(H)\leq \Delta$ and $e_H(X_i,X_j)\geq\gamma^2n$ for all $ij\in \binom{[r]}{2}$;
	\item $|V_i|=(1\pm\eps)n$ for all $i\in[r]$;
	\item $c\colon E(G)\to C$ is a colour-split edge-colouring such that $c$ is locally $\Lambda$-bounded and $c$ restricted to $G[V_i,V_j]$ is $(1-\gamma)e_G(V_i,V_j)/e_H(X_i,X_j)$-bounded for all $ij\in\binom{[r]}{2}$.
\end{enumerate}
Then there exists an $(\eps',d')$-super-regular blow-up instance  $(H',G',(X_{i,j})_{i\in [r],j\in[\Delta^2]},(V_{i,j})_{i\in [r],j\in[\Delta^2]})$ such that
\begin{enumerate}[label={\rm (\alph*)}]
\item\label{refined partitions} $(X_{i,j})_{j\in[\Delta^2]}$ is partition of $X_i$ and $(V_{i,j})_{j\in[\Delta^2]}$ is partition of $V_i$ for every $i\in[r]$, and $|X_{i,j}|=|V_{i,j}|=(1\pm\eps')n/\Delta^2$ for all $i\in[r], j\in[\Delta^2]$;
\item\label{H'} $H'$ is a supergraph of $H$ on $V(H)$ such that $H'[X_{i_1,j_1},X_{i_2,j_2}]$ is a matching of size at least $\gamma^4 n/\Delta^2$ for all $i_1,i_2\in[r], j_1,j_2\in[\Delta^2], (i_1,j_1)\neq(i_2,j_2)$;\COMMENT{Note that we do not need to require an upper bound for $\Delta(H')$ since $H'[X_{i_1,j_1},X_{i_2,j_2}]$ is a matching.}
\item\label{G'} $G'$ is a graph on $V(G)$ such that $G'[V_{i_1,j_1},V_{i_2,j_2}]\subseteq G[V_{i_1},V_{i_2}]$ for all distinct $i_1,i_2\in[r]$ and all $j_1,j_2\in[\Delta^2]$;
\item\label{c'} $c'\colon E(G')\to C'$ is an edge-colouring of $G'$ such that $c'|_{E(G)\cap E(G')}=c|_{E(G)\cap E(G')}$, and $c'$ is colour-split with respect to the partition $(V_{i,j})_{i\in [r],j\in[\Delta^2]}$, and $c'$ is locally $\Lambda$-bounded, and $c'$ restricted to $G'[V_{i_1,j_1},V_{i_2,j_2}]$ is $$\left(1-\frac{\gamma}{2}\right)\frac{e_{G'}(V_{i_1,j_1},V_{i_2,j_2})}{e_{H'}(X_{i_1,j_1},X_{i_2,j_2})}\text{-bounded}$$ for all $i_1,i_2\in[r], j_1,j_2\in[\Delta^2], (i_1,j_1)\neq(i_2,j_2)$.
\end{enumerate}
\end{lemma}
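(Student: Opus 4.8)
The plan is to proceed in three stages, following the blueprint sketched before the statement: first partition each $X_i$ into $\Delta^2$ almost-equal $2$-independent sets using Hajnal--Szemer\'edi; then refine the partition of each $V_i$ uniformly at random into matching parts; and finally add a sparse set of new edges to $H$ (with fresh colours on $G$) to upgrade each bipartite pair $H[X_{i_1,j_1},X_{i_2,j_2}]$ into a matching of the required size, while splitting the colour set accordingly. First I would apply \cref{thm:HS} to $H^2[X_i]$ for each $i\in[r]$: since $\Delta(H^2[X_i])\le \Delta^2-1$, we obtain a partition $(X_{i,j})_{j\in[\Delta^2]}$ of $X_i$ into sets of size $\lfloor |X_i|/\Delta^2\rfloor$ or $\lceil |X_i|/\Delta^2\rceil$, each of which is independent in $H^2$, hence $2$-independent in $H$. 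In particular $H[X_{i_1,j_1},X_{i_2,j_2}]$ has maximum degree $1$ already (it is a matching, possibly too small), and by (ii) its parts have size $(1\pm\eps)n/\Delta^2\cdot(1\pm 1/n)=(1\pm\eps')n/\Delta^2$, giving the $X$-half of~\ref{refined partitions} and the ``matching'' part of~\ref{H'}.

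Next I would refine each $V_i$ by assigning every vertex of $V_i$ independently and uniformly at random to one of the parts $V_{i,1},\dots,V_{i,\Delta^2}$, and I would re-index within each block so that $|X_{i,j}|=|V_{i,j}|$ exactly (this requires moving $O(1)$ vertices between blocks, which changes nothing asymptotically). A Chernoff bound gives $|V_{i,j}|=(1\pm\eps')n/\Delta^2$ whp, and a standard application of \cref{thm: almost quasirandom} (or directly Fact~\ref{fact:regularity} together with a second-moment/codegree computation on the random subsets, exactly as in Lemma~\ref{lem:regularity colour splitting}) shows that whp every pair $G[V_{i_1,j_1},V_{i_2,j_2}]$ with $i_1\ne i_2$ is $(\eps'',d)$-super-regular for suitable $\eps\ll\eps''\ll\eps'$. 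For the colours, since $c$ is colour-split with respect to $(V_i)$, every colour already lives on a unique pair $G[V_{i_1},V_{i_2}]$; within that pair I would assign each colour $\alpha$ (with its $\le$ boundedness-many edges) uniformly at random to one of the $\Delta^4$ refined pairs $G[V_{i_1,j_1},V_{i_2,j_2}]$, deleting all $\alpha$-edges not landing in the chosen pair. This keeps $c'$ locally $\Lambda$-bounded and colour-split with respect to the refined partition; another Chernoff bound shows that, since the original colouring was $(1-\gamma)e_G(V_{i_1},V_{i_2})/e_H(X_{i_1},X_{i_2})$-bounded and all relevant quantities split by roughly the factor $\Delta^4$, the new colouring restricted to $G'[V_{i_1,j_1},V_{i_2,j_2}]$ is $(1-3\gamma/4)e_{G'}(\cdot,\cdot)/e_H(\cdot,\cdot)$-bounded, where here $e_H$ still refers to the (possibly small) matching.

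Finally I would fix up the matchings: for each pair $(i_1,j_1)\ne(i_2,j_2)$ with $e_H(X_{i_1,j_1},X_{i_2,j_2})< \gamma^4 n/\Delta^2$ I would greedily add new edges between as-yet-unmatched vertices of $X_{i_1,j_1}$ and $X_{i_2,j_2}$ until the matching has size exactly $\lceil\gamma^4 n/\Delta^2\rceil$, obtaining $H'\supseteq H$ (one must check these new edges can be added respecting that each vertex of $X_i$ gets new edges into only $O(1)$ of the $\Delta^2\cdot r$ other blocks, so no degree blows up — in fact one only ever needs to \emph{increase} small matchings, and $\gamma^4/\Delta^2$ is tiny, so greedily pairing up unused vertices works). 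Correspondingly I would add, for each new $H'$-edge, a brand-new colour to $C'$ and assign it to a freshly chosen $G'$-edge of the corresponding pair $G'[V_{i_1,j_1},V_{i_2,j_2}]$ (such edges exist in abundance by super-regularity, and I choose them to keep each new colour a single edge and to avoid creating high colour-degree). Since $\gamma^4 n/\Delta^2 \ll \gamma n \cdot e_{G'}(\cdot,\cdot)/e_{H'}(\cdot,\cdot)$, padding $e_{H'}$ up from its old value only \emph{helps} the boundedness bound (it increases the denominator $e_{H'}(X_{i_1,j_1},X_{i_2,j_2})$), so $c'$ restricted to $G'[V_{i_1,j_1},V_{i_2,j_2}]$ is $(1-\gamma/2)e_{G'}(\cdot,\cdot)/e_{H'}(\cdot,\cdot)$-bounded as required in~\ref{c'}, and a final application of Fact~\ref{fact:regularity robust} (deleting the $o(n)$ vertices re-indexed in step~2 and the edges of the new matchings) restores $(\eps',d')$-super-regularity after a sparsification step as in Lemma~\ref{lem:colour splitting} to bring the density down to exactly $d'$; this yields~\ref{G'} and~\ref{refined partitions}.

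I expect the main obstacle to be the bookkeeping in the second stage: one must simultaneously control super-regularity of all $O(r^2\Delta^4)$ refined bipartite pairs and the colour-boundedness of all colours on each of them, under a single random refinement of the vertex sets \emph{and} a random partition of the colours, and verify the two randomisations interact harmlessly (the colour-boundedness bound on a refined pair depends on $e_{G'}(V_{i_1,j_1},V_{i_2,j_2})$, which is itself a random variable from the vertex refinement). This is handled exactly as in the proof of Lemma~\ref{lem:colour splitting}: condition on the high-probability event that all vertex-refinement statistics hold, then run the colour randomisation with McDiarmid's inequality (Theorem~\ref{thm:McDiarmid}), using local $\Lambda$-boundedness to bound the per-colour Lipschitz constants, and take a union bound over the polynomially many (in $n$) pairs and colours.
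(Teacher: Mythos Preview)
Your outline has the right skeleton (Hajnal--Szemer\'edi on $H^2$, random refinement of the $V_i$, colour-split, sparsify, pad), but there is a genuine gap in the colour-splitting step. Assigning each colour \emph{uniformly} to one of the $\Delta^4$ refined sub-pairs does not yield the boundedness in~\ref{c'}, because the $H$-edges between $X_{i_1}$ and $X_{i_2}$ need not be evenly spread over the $\Delta^4$ sub-pairs $(X_{i_1,j_1},X_{i_2,j_2})$ --- the Hajnal--Szemer\'edi theorem gives you balanced part sizes, nothing more. Concretely, since $\gamma\ll 1/\Delta$ it can happen that all of $e_H(X_{i_1},X_{i_2})=\gamma^2 n\le n/\Delta^2$ sits inside a single sub-pair, so $e_H(X_{i_1,j_1},X_{i_2,j_2})=e_H(X_{i_1},X_{i_2})$. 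Under uniform colour assignment, $e_{G'}(V_{i_1,j_1},V_{i_2,j_2})\approx e_G(V_{i_1},V_{i_2})/\Delta^8$ while, for any colour~$\alpha$ that lands on this sub-pair, $e^\alpha_{G'}(V_{i_1,j_1},V_{i_2,j_2})\approx e^\alpha_G(V_{i_1},V_{i_2})/\Delta^4$; the ratio $e^\alpha_{G'}/e_{G'}$ has therefore grown by a factor~$\Delta^4$ compared to $e^\alpha_G/e_G$, and the required inequality $e^\alpha_{G'}\le (1-\gamma/2)\,e_{G'}/e_{H'}$ unwinds to $(1-\gamma)\Delta^4\le 1-\gamma/2$, which is false for $\Delta\ge 2$. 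Your later padding of $e_{H'}$ does not help here, since the offending sub-pair already has $e_H\ge \gamma^4 n/\Delta^2$ and receives no new edges. The paper fixes exactly this point: it assigns colour~$\alpha$ to sub-pair $(j_1,j_2)$ with probability proportional to $e_{H'}(X_{i_1,j_1},X_{i_2,j_2})$ rather than $1/\Delta^4$. This makes the density of $G_1[V_{i_1,j_1},V_{i_2,j_2}]$ proportional to $e_{H'}(X_{i_1,j_1},X_{i_2,j_2})$, so that the ratio $e_{G_1}/e_{H'}$ is (up to~$1\pm o(1)$) the same constant $d(n')^2/e_{H'}(X_{i_1},X_{i_2})$ for every sub-pair, and the original boundedness carries over; the subsequent sparsification to uniform density~$d'$ then preserves the ratio.

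A secondary omission: for pairs with $i_1=i_2$ (and $j_1\ne j_2$) the graph $G$ has no edges whatsoever, so $G'[V_{i,j_1},V_{i,j_2}]$ cannot arise by sparsifying~$G$; one has to \emph{add} an $(\eps',d')$-super-regular bipartite graph there and colour it rainbow with fresh colours. Your stage-two super-regularity claim was only stated for $i_1\ne i_2$, and your stage-three remark about ``assigning a brand-new colour to a freshly chosen $G'$-edge'' does not create these missing edges.
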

\begin{proof}
Since $c\colon E(G)\to C$ is colour-split, we may assume that $c$ is the union of edge-colourings $c_{i_1i_2}\colon E(G[V_{i_1},V_{i_2}])\to C_{i_1i_2}$ for $i_1i_2\in\binom{[r]}{2}$ where $C_{i_1i_2}\cap C_{i_1'i_2'}=\emptyset$ for distinct $i_1i_2,i_1'i_2'\in\binom{[r]}{2}$.

First, we apply Theorem~\ref{thm:HS} to $H^2[X_i]$ for every $i\in[r]$.
Since $\Delta(H^2[X_i])\leq \Delta^2-1$, there exists a partition of $X_i$ into 2-independent sets $X_{i,1},\ldots,X_{i,\Delta^2}$ in $H$ each of size $|X_i|/\Delta^2\pm 1=(1\pm2\eps)n'$, where $n':=n/\Delta^2$.
Hence for all $i_1,i_2\in[r], j_1,j_2\in[\Delta^2]$, the bipartite graph $H[X_{i_1,j_1},X_{i_2,j_2}]$ is a (possibly empty) matching.
Clearly, we can add a minimial number of edges to $H$ to obtain a supergraph $H'$ such that $H'[X_{i_1,j_1},X_{i_2,j_2}]$ is a matching of size at least $\gamma^4n'$ for all $i_1,i_2\in[r], j_1,j_2\in[\Delta^2], (i_1,j_1)\neq(i_2,j_2)$, which yields~\ref{H'}.\COMMENT{Note that we even add edges between previously empty pairs when $i_1=i_2$. 
When we apply Lemma~\ref{lem:Hajnal Szemeredi partitioning}, note that clearly, any rainbow embedding of $H'$ also yields a rainbow embedding of~$H$.} 

In order to  obtain~\ref{refined partitions}, we refine the partition of $V(G)$ accordingly. 
We claim that the following partitions exist.
For every $i\in[r]$, let $(V_{i,j})_{j\in[\Delta^2]}$ be a partition of $V_i$ such that $|V_{i,j}|=|X_{i,j}|$ for every $j\in[\Delta^2]$, and such that 
for all distinct $i_1,i_2\in[r]$, all $j_1,j_2\in[\Delta^2]$, and $v\in V_{i_1,j_1}\cup V_{i_2,j_2}$, we have
\begin{align}\label{eq:degree refined G}
\dg_{G[V_{i_1,j_1},V_{i_2,j_2}]}(v)=(d\pm3\eps)n'
\end{align}
and
\begin{align}\label{eq:colouring bounded}
c|_{E(G[V_{i_1,j_1},V_{i_2,j_2}])} \text{ is } (1-\gamma+\eps)\frac{(d+3\eps)n'^2}{e_H(X_{i_1},X_{i_2})}\text{-bounded.}
\end{align}

That such a partition exists can be seen by a probabilistic argument as follows: 
For each $i\in[r]$, let $\tau_i\colon V_i\to[\Delta^2]$ where $\tau_i(v)$ is chosen uniformly at random for every $v\in V_i$, all independently, and let $V_{i,j}:=\{v\in V_i\colon \tau_i(v)=j \}$ for every $j\in[\Delta^2]$.
McDiarmid's inequality together with a union bound implies that~\eqref{eq:degree refined G} and \eqref{eq:colouring bounded} hold with probability at least $1-\eul^{-\sqrt{n}}$.
Moreover, standard properties of the multinomial distribution yield that $|V_{i,j}|=|X_{i,j}|$ for all $i\in[r], j\in[\Delta^2]$ with probability at least $\Omega(n^{-\Delta^2r})$.\COMMENT{For the $k$-multinomial coefficient, we obtain with Stirling's formula that
$$\binom{n}{\frac{n}{k},\ldots,\frac{n}{k}}=\frac{n!}{\left(\frac{n}{k}\right)!^k}\sim \frac{\sqrt{2\pi n}\left(\frac{n}{e}\right)^n}{\left(\sqrt{\frac{2\pi n}{k}}\left(\frac{n}{ke}\right)^{n/k}\right)^k}=\frac{k^{k/2}k^n}{\left(2\pi n \right)^{(k-1)/2}}.$$
This implies that 
$$\prob{\forall j\in[\Delta^2]\colon\left|\tau_i(j)\right|=\frac{n}{\Delta^2}}=\Omega(n^{-\Delta^2}),$$
for every $i\in[r]$. Thus,
$$\prob{\forall i\in[r], j\in[\Delta^2]\colon\left|\tau_i(j)\right|=\frac{n}{\Delta^2}}=\Omega(n^{-r\Delta^2}).$$
}

Thus, for every $i\in[r]$, there exists a partition $(V_{i,j})_{j\in[\Delta^2]}$ of $V_i$ with the required properties.

Since $G[V_{i_1},V_{i_2}]$ is $(\eps,d)$-super-regular and due to~\eqref{eq:degree refined G}, it follows that for all distinct $i_1,i_2\in[r]$ and all $j_1,j_2\in[\Delta^2]$, the graph $G[V_{i_1,j_1},V_{i_2,j_2}]$ is $(2\Delta^2\eps,d)$-super-regular.\COMMENT{Since $G[V_{i_1},V_{i_2}]$ is $(\eps,d)$-super-regular, it follows that $G[V_{i_1,j_1},V_{i_2,j_2}]$ is $\frac{n}{(1+\eps)n/\Delta^2}\eps$-regular, thus say $2\Delta^2\eps$-regular.
Due to~\eqref{eq:degree refined G}, it follows that $G[V_{i_1,j_1},V_{i_2,j_2}]$ is $(2\Delta^2\eps,d)$-super-regular.}
By the construction of the supergraph $H'$, we have added at most $\gamma^4\Delta^2n$ edges to each pair $(X_{i_1},X_{i_2})$ in $H$.
Hence for all distinct $i_1,i_2\in[r]$,
\begin{align}\label{eq:e_H'}
&e_H(X_{i_1},X_{i_2})
\geq e_{H'}(X_{i_1},X_{i_2})-\gamma^4\Delta^2n
\geq e_{H'}(X_{i_1},X_{i_2})(1-\gamma^2\Delta^2),
\end{align}
where the last inequality holds since $e_{H'}(X_{i_1},X_{i_2})\geq e_{H}(X_{i_1},X_{i_2})\geq \gamma^2n$.
Now~\eqref{eq:colouring bounded} and~\eqref{eq:e_H'} imply that for all distinct $i_1,i_2\in[r]$ and all $j_1,j_2\in[\Delta^2]$, the colouring\COMMENT{$$(1-\gamma+\eps)\frac{(d\pm3\eps)n'^2}{e_H(X_{i_1},X_{i_2})}\leq \frac{(1-\gamma+\eps)}{(1-\Delta^2\gamma^2)}\frac{(d+3\eps)n'^2}{e_{H'}(X_{i_1},X_{i_2})}
\leq \left(1-\frac{3\gamma}{4}\right)\frac{(d+3\eps)n'^2}{e_{H'}(X_{i_1},X_{i_2})}.$$}
\begin{align}\label{eq:colouring bounded2}
c|_{E(G[V_{i_1,j_1},V_{i_2,j_2}])} \text{ is } \left(1-\frac{3\gamma}{4}\right)\frac{(d+3\eps)n'^2}{e_{H'}(X_{i_1},X_{i_2})}\text{-bounded.}
\end{align}

Next, we iteratively define spanning subgraphs $G_2\subseteq G_1\subseteq G$ and a supergraph $G'\supseteq G_2$ that satisfies the required properties in the statement.

First, we claim that there exists a spanning subgraph $G_1\subseteq G$ that is colour-split with respect to the partition $(V_{i,j})_{i\in [r],j\in[\Delta^2]}$ and still super-regular.
In order to see that such a subgraph exists, we use a probabilistic argument.
For all distinct $i_1,i_2\in[r]$, let $\tau_{i_1i_2}\colon C_{i_1i_2}\to[\Delta^2]\times[\Delta^2]$ where each $\tau_{i_1i_2}(\alpha)$ is chosen independently at random according to the probability distribution $(p_{(i_1,j_1),(i_2,j_2)})_{j_1,j_2\in[\Delta^2]}$ with\COMMENT{For fixed but distinct $i_1,i_2\in[r]$, we have $\sum_{j_1,j_2\in[\Delta^2]}p_{(i_1,j_1),(i_2,j_2)}=1$.}
\begin{align}\label{eq:def p_i1j1i2j2}
p_{(i_1,j_1),(i_2,j_2)}:=\frac{e_{H'}(X_{i_1,j_1},X_{i_2,j_2})}{e_{H'}(X_{i_1},X_{i_2})}\geq \frac{\gamma^4n'}{2\Delta^4 n'}\geq\gamma^5.
\end{align}
Define $G_1$ by keeping each edge  $e\in E(G[V_{i_1,j_1},V_{i_2,j_2}])$ if $\tau_{i_1i_2}(c(e))=(j_1,j_2)$.
By Lemma~\ref{lem:regularity colour splitting} and since $G[V_{i_1,j_1},V_{i_2,j_2}]$ is $(2\Delta^2\eps,d)$-super-regular, there exists $G_1\subseteq G$ such that the colouring of $G_1$ is colour-split and $G_1[V_{i_1,j_1},V_{i_2,j_2}]$ is $(\eps'/2,p_{(i_1,j_1),(i_2,j_2)}d)$-super-regular for all distinct $i_1,i_2\in[r]$ and all $j_1,j_2\in[\Delta^2]$.

For all distinct $i_1,i_2\in[r]$, all $j_1,j_2\in[\Delta^2]$, and every colour $\alpha\in C_{i_1i_2}$, we obtain
\begin{align*}
e^\alpha_{G_1}(V_{i_1,j_1},V_{i_2,j_2})
&\leq e^\alpha_{G}(V_{i_1,j_1},V_{i_2,j_2})
\stackrel{\eqref{eq:colouring bounded2}}{\leq} \left(1-\frac{3\gamma}{4}\right)\frac{(d+3\eps)n'^2}{e_{H'}(X_{i_1},X_{i_2})}
\notag\\
&=\left(1-\frac{3\gamma}{4}\right)\frac{p_{(i_1,j_1),(i_2,j_2)}(d+3\eps)n'^2}{e_{H'}(X_{i_1,j_1},X_{i_2,j_2})},
\end{align*}
and thus, since $G_1[V_{i_1,j_1},V_{i_2,j_2}]$ is $(\eps'/2,p_{(i_1,j_1),(i_2,j_2)}d)$-super-regular, we conclude that
\begin{align}\label{eq:HS e^a G_1}
e^\alpha_{G_1}(V_{i_1,j_1},V_{i_2,j_2})
\leq\left(1-\frac{2\gamma}{3}\right)\frac{e_{G_1}(V_{i_1,j_1},V_{i_2,j_2})}{e_{H'}(X_{i_1,j_1},X_{i_2,j_2})}.
\end{align}

Let $G_2$ be the spanning subgraph of $G_1$ where for each bipartite pair $G_1[V_{i_1,j_1},V_{i_2,j_2}]$, we keep each edge independently at random with probability $d'/(p_{(i_1,j_1),(i_2,j_2)}d)$. 
As $G_1[V_{i_1,j_1},V_{i_2,j_2}]$ is $(\eps'/2,p_{(i_1,j_1),(i_2,j_2)}d)$-super-regular, we may conclude by simple applications of Chernoff's inequality\COMMENT{We can apply Chernoff's inequality to the set $\{Z_e\colon e\in E(G_1[V_{i_1,j_1},V_{i_2,j_2}])\}$ of independent Bernoulli random variables where $Z_e$ indicates whether $e\in E(G_1[V_{i_1,j_1},V_{i_2,j_2}])$ is present in $G_2[V_{i_1,j_1},V_{i_2,j_2}]$. 
It holds $$\dg_{G_2[V_{i_1,j_1},V_{i_2,j_2}]}(v)=\sum_{v\in e} Z_e$$
and 
$$e_{G_2}(S,T)=\sum_{st\in E(G_1),s\in S,t\in T} Z_{st},$$
for $S\subseteq V_{i_1,j_1}$, $T\subseteq V_{i_2,j_2}$. 
Similarly, 
$$e_{G_2}^\alpha(V_{i_1,j_1},V_{i_2,j_2})=\sum_{e\in E(G_1[V_{i_1,j_1},V_{i_2,j_2}]), c(e)=\alpha} Z_e.$$
Hence, Chernoff's inequality yields the required concentration since the random variables $Z_e$ are independent.} that with probability at least $1-1/n$ for all distinct  $i_1,i_2\in[r]$ and all $j_1,j_2\in[\Delta^2]$, the graph $G_2[V_{i_1,j_1},V_{i_2,j_2}]$ is $(\eps',d')$-super-regular, and by~\eqref{eq:HS e^a G_1} for every colour $\alpha\in C$, we have 
\begin{align}\label{eq:HS e^a G_2}
e^\alpha_{G_2}(V_{i_1,j_1},V_{i_2,j_2})\leq \left(1-\frac{\gamma}{2}\right)\frac{e_{G_2}(V_{i_1,j_1},V_{i_2,j_2})}{e_{H'}(X_{i_1,j_1},X_{i_2,j_2})}.
\end{align}

Finally, we may add edges in the empty bipartite graphs $G_2[V_{i,j},V_{i,j'}]$ for all $i\in[r]$ and all distinct $j,j'\in[\Delta^2]$ in such a way that we obtain a supergraph $G'\supseteq G_2$ where $G'[V_{i_1,j_1},V_{i_2,j_2}]$ is $(\eps',d')$-super-regular for all $i_1,i_2\in[r]$ and $j_1,j_2\in[\Delta^2]$, $(i_1,j_1)\neq(i_2,j_2)$.
Hence, we conclude that $(H',G',(X_{i,j})_{i\in [r],j\in[\Delta^2]},(V_{i,j})_{i\in [r],j\in[\Delta^2]})$ is an $(\eps',d')$-super-regular blow-up instance that satisfies~\ref{G'}.

Let $c^{art}\colon\binom{V(G)}{2}\to C^{art}$ be a rainbow edge-colouring of all possible edges $\binom{V(G)}{2}$ such that $C^{art}\cap C=\emptyset$. 
By colouring the edges $E(G')\sm E(G_2)$ using $c^{art}$, we may obtain an edge-colouring $c'\colon E(G')\to C\cup C^{art}$ which extends $c$ and is clearly $\Lambda$-bounded. 
By the construction of $G_2$, the colouring $c'$ is colour-split, and
$$\left(1-\frac{\gamma}{2}\right)\frac{e_{G'}(V_{i_1,j_1},V_{i_2,j_2})}{e_{H'}(X_{i_1,j_1},X_{i_2,j_2})}\text{-bounded}$$
for each bipartite subgraph $G'[V_{i_1,j_1},V_{i_2,j_2}]$ with $i_1,i_2\in[r], j_1,j_2\in[\Delta^2], (i_1,j_1)\neq(i_2,j_2)$ due to~\eqref{eq:HS e^a G_2}.
This yields~\ref{c'} and completes the proof.
\end{proof}


\section{Approximate Embedding Lemma}\label{sec:4graphs new2}
In this section, we prove the `Approximate Embedding Lemma' (Lemma~\ref{lem:embedding lemma}), which allows us to embed a cluster $X_i$ into $V_i$ (here $X_0,V_0$) almost completely, while maintaining crucial properties of the `candidacy graphs' of other clusters.

We say that $(H,G, (A_i)_{i\in[r]_0},c)$ is an \defn{embedding-instance} if 
\begin{itemize}
\item $H,G$ are graphs and $A_i$ is a bipartite graph with vertex partition $(X_i,V_i)$ for every $i\in[r]_0$ such that $(X_i)_{i\in[r]_0}$ is a partition of $V(H)$ into independent sets, $(V_i)_{i\in[r]_0}$ is a partition of $V(G)$, and $|X_i|=|V_i|$ for all $i\in[r]_0$;
\item for all $i\in[r]$, the graph $H[X_0,X_i]$ is a matching;
\item $c\colon E(G\cup \bigcup_{i\in[r]_0}A_i) \to 2^C$ is an edge set colouring that is colour-split with respect to the partition $(X_0,\ldots,X_r,V_0,\ldots,V_r)$
and satisfies $|c(e)|=1$ for all $e\in E(G)$.
\end{itemize}

We say that $(H,G, (A_i)_{i\in[r]_0},c)$ is an \emph{$(\eps,(d_i^G)_{i\in[r]},(d_i)_{i\in[r]_0},t,\Lambda)$-embedding-instance} if in addition, we have that
\begin{itemize}
\item $G[V_0,V_i]$ is $(\eps,d_i^G)$-super-regular and $c$ restricted to $G[V_0,V_i]$ is $(1+\eps)e_G(V_0,V_i)/e_H(X_0,X_i)$-bounded for all $i\in[r]$;
\item $A_i$ is $(\eps,d_i)$-super-regular and $c$ restricted to $A_i$ is $(1+\eps)d_i |X_i|$-bounded for all $i\in[r]_0$;
\item $c$ is locally $\Lambda$-bounded and $|c(e)|\le t$ for all $e\in  \bigcup_{i\in[r]_0}E(A_i)$.
\end{itemize}

Here, $X_0$ is the cluster we want to embed into $V_0$ by finding an almost perfect rainbow matching $\sigma$ in~$A_0$, and $t$ can be thought of as the number of clusters we have previously embedded. 
For convenience, we identify matchings $\sigma$ between $X_0$ and $V_0$ with functions $\sigma\colon X_0^\sigma\to  V_0^\sigma$, where $X_0^\sigma=V(\sigma)\cap X_0$ and $ V_0^\sigma=V(\sigma)\cap V_0$.
Whenever we write $xv\in E(A_i)$, we tacitly assume that $x\in X_i$ and $v\in V_i$.

The following two definitions encapsulate how the choice of $\sigma$ affects the candidacy graphs $(A_i)_{i\in[r]}$ and their colouring for the next step (see Figure~\ref{fig:induction step}). Let $(H,G,(A_i)_{i\in[r]_0},c)$ be an embedding-instance.

\begin{defin}[Updated candidacy graphs]\label{def:updated candidacy}
For a matching $\sigma\colon X_0^\sigma\to  V_0^\sigma$ in~$A_0$, 
we define $(\Aupd_i)_{i\in[r]}$ as the \emph{updated candidacy graphs (with respect to $\sigma$)} as follows: for every $i\in[r]$, let $\Aupd_i$ be the spanning subgraph of $A_i$ containing precisely those edges $xv\in E(A_i)$ for which the following holds: 
if $x$ has an $H$-neighbour $x_0\in X_0^\sigma$ (which would be unique), then $\sigma(x_0)v\in E(G[V_0,V_i])$.
\end{defin}

This definition ensures that when we embed $x$ in a future round, we are guaranteed that the $H$-edge $x_0x$ is mapped to a $G$-edge. Note that this definition does not depend at all on the colouring~$c$.
Moreover, we also define updated colourings for the updated candidacy graphs, where we add up to one additional colour to the edges in the new candidacy graphs according to~$\sigma$.

\begin{defin}[Updated colouring]\label{def:updated colouring}
For a matching $\sigma\colon X_0^\sigma\to  V_0^\sigma$ in~$A_0$, we define the \defn{updated edge set colouring $c^\sigma$} of the updated candidacy graphs as follows: for each $i\in[r]$ and $xv\in E(\Aupd_i)$, when $x$ has an $H$-neighbour $x_0\in X_0^\sigma$, then set $c^\sigma(xv) := c(xv) \cup c(\sigma(x_0)v)$, and otherwise set $c^\sigma(xv):=c(xv)$.\COMMENT{Note that $\sigma(x_0)v\in E(G)$ because $xv\in E(\Aupd_i)$.}
\end{defin}

\begin{figure}[t]%
	\begin{center}
		\begin{tikzpicture}[scale = 0.7,every text node part/.style={align=center}]
		\def\ver{0.1} 

		\draw (14,5.5) node {$H$};
		\draw (14,-0.5) node {$G$};
		
		\draw (1,6.5) node {$X_{0}$};
		\draw (7,6.5) node {$X_{i}$};
		\draw (13,6.5) node {$X_{j}$};

		\draw (1,-1.5) node {$V_{0}$};
		\draw (7,-1.5) node {$V_{i}$};
		\draw (13,-1.5) node {$V_{j}$};
		
		\draw (7,2.5) node {$A_{i}$};

		\filldraw[fill=black!20, draw=black!50]
		(-0.7,6)--(0.7,6) -- (0.7,0) -- (-0.7,0) -- cycle;

		\draw[thick,->, dashed] (9.4,-1.4)  arc  (180:118:4.4cm);

		\draw[very thick, fill=white] 
		(0,0) ellipse (1 and 1.5)
		(6,0) ellipse (1 and 1.5)
		(12,0) ellipse (1 and 1.5)
		(0,5) ellipse (1 and 1.5)
		(6,5) ellipse (1 and 1.5)
		(12,5) ellipse (1 and 1.5);

		\draw[thick]
		(0,5) .. controls (2.5,8) and (9.5,8) .. (12,5)
		(0,5)--(6,5)
		(0,0) -- node[midway,anchor=north] {$\alpha$} (6,0)
		(0,0) .. controls (2.5,-3) and (9.5,-3) .. node[near end,anchor=north] {$\alpha$} (12,0)
		
		(12,0) -- node[anchor=west] {$\{\alpha\}\cup c(x_jv_j)$} (12,5)  
		;

		\draw[line width=3pt,->]
		(0,3.5) -- node[anchor=west] {$\sigma \In E(A_0)$}(0,1.5);

		\draw[fill=black!20, thick]
		(6,5) -- (5.4,0.4) -- (6.6,0.4) -- (6,5);
		
		\draw[fill=black!20,thick]
		(0,0) -- (6,0.3) -- (6,-1.2) -- (0,0);
		
		\draw[fill=black!50, thick] 
		(6,-0.4) ellipse (0.6 and 0.8)
		(6,0.4) ellipse (0.6 and 0.8);

		\draw[very thick] 
		(6,0) ellipse (1 and 1.5)
		;

		\begin{scope}[even odd rule]
		\draw[fill=black!20, thick] 
		(6,-0.4) ellipse (0.6 and 0.8)
		(6,0.4) ellipse (0.6 and 0.8);
		\end{scope}

		\draw[fill]
		(12,5) circle (\ver)
		node[anchor=south west] {$x_j$}
		(6,5) circle (\ver)
		node[anchor=south east] {$x_i$}
		(0,5) circle (\ver)
		node[anchor=south east] {$x_0$}
		(6,-.1) circle (\ver)
		(0,0) circle (\ver)
		node[anchor=north east] {$v_0$}
		(12,0) circle (\ver)
		node[anchor=north west] {$v_j$}
		;
		
		\draw[thick]
		(0,0)--(6,-.1)--(6,5);
		
		\draw (6.25,-0.7) node {$v_{i}$};

		\end{tikzpicture}
	\end{center}
	\caption{If $x_0$ is mapped to $v_0$ by~$\sigma$, then only those candidates of $x_i$ remain that are neighbours of~$v_0$. Moreover, colour $\alpha$ of the edge $v_0v_j$ is added to the the candidate edge $x_jv_j$, which captures the information that if $x_j$ is later embedded at $v_j$, then this embedding uses~$\alpha$.}
	\label{fig:induction step}
\end{figure}
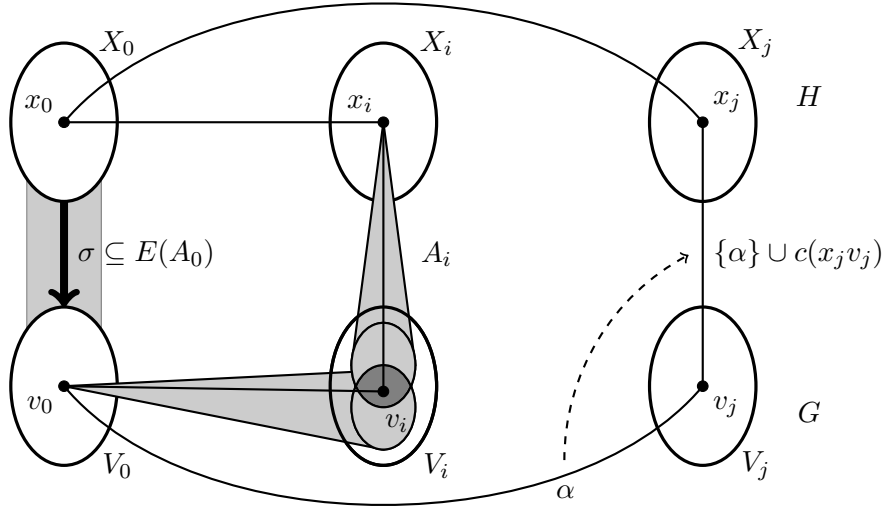

We now state and prove our Approximate Embedding Lemma.
\begin{lemma}[Approximate Embedding Lemma]\label{lem:embedding lemma}
Let $1/n\ll\eps\ll\eps'\ll(d_i^G)_{i\in[r]},(d_i)_{i\in[r]_0},1/\Lambda,1/r,1/(t+1)$.
Suppose $(H,G,(A_i)_{i\in[r]_0},c)$ is an $(\eps,(d_i^G)_{i\in[r]},(d_i)_{i\in[r]_0},t,\Lambda)$-embedding-instance with $|V_0|=n$, $|V_i|=(1\pm\eps)n$, and $e_H(X_0,X_i)\geq \eps'n$ for all $i\in[r]$.
Suppose the codegree of $c$ is $K\leq\sqrt{n}$.

Then there is a rainbow matching $\sigma\colon X_0^\sigma\to  V_0^\sigma$ in $A_0$ of size at least $(1-\eps')n$ such that for all $i\in[r]$, there exists a spanning subgraph $A_i^{new}$ of the updated candidacy graph $\Aupd_i$ and
\begin{enumerate}[label={\rm (\Roman*)\textsubscript{\ref{lem:embedding lemma}}}]
\item\label{cond:4graphs i} $A_i^{new}$ is $(\eps',d_i^Gd_i)$-super-regular;
\item\label{cond:4graphs ii} the updated colouring $c^\sigma$ restricted to $A_i^{new}$ is $(1+\eps') d_i^Gd_i |X_i|$-bounded;
\item\label{cond:4graphs iii} $c^\sigma$ restricted to $A_i^{new}$ has codegree at most $\max\{K,n^\eps\}$.
\end{enumerate}
\end{lemma}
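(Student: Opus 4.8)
**The plan is to apply the pseudorandom hypergraph matching theorem (Theorem~\ref{thm:hypermatching}) to a suitable auxiliary hypergraph that models rainbow matchings in $A_0$, and then to use the pseudorandomness of the resulting matching $\sigma$ to verify, via Chernoff/McDiarmid concentration, that the updated candidacy graphs inherit super-regularity and appropriate boundedness.**

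First I would set up the auxiliary hypergraph $\cH$. Since $c$ restricted to $A_0$ is an edge set colouring with $|c(e)|\le t$, I would first pass to a proper refinement: for each edge $xv\in E(A_0)$, fix an enumeration of its colour set $c(xv)=\{\alpha_1,\dots,\alpha_{|c(xv)|}\}$, and create a hyperedge $\{x,v\}\cup c(xv)$ on vertex set $X_0\cup V_0\cup C_0$, where $C_0$ is the set of colours appearing on $A_0$. A matching in $\cH$ then corresponds exactly to a rainbow matching in $A_0$. By $(\eps,d_0)$-super-regularity of $A_0$, every vertex of $X_0\cup V_0$ has degree $(1\pm\eps)d_0 n$ in $\cH$; since $c$ restricted to $A_0$ is $(1+\eps)d_0|X_0|$-bounded, every colour-vertex also has degree at most $(1+\eps)d_0 n$; and the codegree of $\cH$ is bounded in terms of $K\le\sqrt n$, the local boundedness $\Lambda$, super-regularity, and $t$ — in particular $\Delta^c(\cH)\le\max\{K,\Lambda,t+1,\dots\}\le n^{o(1)}\le \Delta^{1-\delta}$ with $\Delta=(1\pm\eps)d_0 n$ for suitable $\delta$. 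So Theorem~\ref{thm:hypermatching} applies with $D=\Delta\approx d_0 n$.

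Next I would design the weight functions $\cW$ fed to Theorem~\ref{thm:hypermatching} so that the conclusions \ref{cond:4graphs i}--\ref{cond:4graphs iii} follow. The key point is that for each $i\in[r]$, each edge $xv\in E(A_i)$ whose $H$-neighbour $x_0=N_H(x)\cap X_0$ exists survives in $\Aupd_i$ iff the (unique) $\sigma$-edge at $x_0$ lands in $N_G(v)$; since $\dg_{G[V_0,V_i]}(v)=(d_i^G\pm\eps)n$, a "random-like" $\sigma$ keeps $xv$ with probability $\approx d_i^G$. Concretely, for a vertex $v\in V_i$ and a subset $Y\subseteq X_i$ with $|Y|\ge\eps'|X_i|$ I would introduce the weight $\omega$ on $E(\cH)$ that, for a hyperedge $\{x_0,v_0,\dots\}$, sums the number of pairs $(x,\cdot)$ with $x\in Y\cap N_{A_i}(v)\cap N_H(x_0)$ and $v_0\in N_G(v)$ — so that $\omega(\sigma)$ counts roughly $d_G(Y,\{v\})\cdot|Y|\cdot d_i^G$ edges, giving the degree/regularity bound for $\Aupd_i$; analogously one sets up weights counting $\alpha$-coloured surviving edges (to control the colour degrees for \ref{cond:4graphs ii}) and weights counting pairs of edges sharing two colours under $c^\sigma$ (for the codegree bound \ref{cond:4graphs iii}, where the extra colour $c(\sigma(x_0)v)$ appended by $c^\sigma$ can only raise the codegree by a factor depending on $\Lambda$, $t$, hence stays below $\max\{K,n^\eps\}$). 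There are polynomially (in $n$) many such sets $Y$ to handle — too many for a direct union bound over $\exp(\Delta^{\eps^2})$ weights as written — so I would instead use a cover/test-set reduction (handle only $Y$ of size exactly $\lceil\eps'|X_i|\rceil$, or dyadic sizes, and pass to regularity via Theorem~\ref{thm: almost quasirandom} which only needs degree and common-neighbourhood statistics for pairs $u,v$, reducing to $O(n^2)$ weights, comfortably $\le\exp(\Delta^{\eps^2})$). Then I define $A_i^{new}$ by taking $\Aupd_i$ and, if it is slightly too dense, randomly sparsifying each $A_i[V_i\cap(\text{image})]$ down to density exactly $d_i^Gd_i$ — actually $\Aupd_i$ already has the right density $d_i^G d_i$ in expectation, so I would just take $A_i^{new}=\Aupd_i$ after verifying the bounds, applying Theorem~\ref{thm: almost quasirandom} to upgrade the pair-degree statistics to $(\eps',d_i^Gd_i)$-super-regularity.

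**The main obstacle** will be controlling the codegree of the \emph{updated} colouring $c^\sigma$ on $A_i^{new}$, i.e.\ establishing \ref{cond:4graphs iii}. Two edges $x v, x' v'\in E(A_i^{new})$ can share two colours under $c^\sigma$ in several ways: both pairs originally shared two colours (bounded by $K$); or one colour is "original" on both and the other is an "appended" colour $c(\sigma(x_0)v)=c(\sigma(x_0')v')$; or both colours are appended. The appended colours depend on $\sigma$, which is where the pseudorandomness of Theorem~\ref{thm:hypermatching} must be leveraged: I would set up weight functions counting, for each fixed pair of colours $(\alpha,\beta)$ and each potential pair of $A_i$-edges that could realise a codegree-$2$ conflict after updating, the contribution to $\sigma$, and argue the expected count is $o(n^\eps)$ so that by the pseudorandom guarantee (and since there are only $|C|^2\le n^{O(1)}$ colour pairs) the actual codegree stays below $\max\{K,n^\eps\}$. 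A delicate accounting is needed because the colour $c(\sigma(x_0)v)$ is not chosen independently per edge — it is determined once $\sigma$ fixes the image of $x_0$ — but the local $\Lambda$-boundedness of $c$ on $G$ limits how many edges at a given $v_0\in V_0$ carry a given colour, which keeps these conflict counts small. Besides this, a minor technical point is handling the few vertices of $X_0$ left uncovered by $\sigma$ (there are at most $\eps' n$ of them, harmless for super-regularity by Fact~\ref{fact:regularity robust}) and ensuring the error parameter $\Delta^{-\eps}$ from Theorem~\ref{thm:hypermatching} is small enough relative to $\eps'$, which dictates the hierarchy $\eps\ll\eps'$.
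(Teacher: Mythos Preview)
Your approach is essentially the same as the paper's: build the auxiliary hypergraph on $X_0\cup V_0\cup C$, apply Theorem~\ref{thm:hypermatching}, and feed it weight functions that encode degree, pair-degree, colour-degree, and colour-codegree statistics of the updated candidacy graphs, then invoke Theorem~\ref{thm: almost quasirandom} to upgrade the degree/codegree information to super-regularity.

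A few points worth tightening. First, your opening reference to ``Chernoff/McDiarmid concentration'' is a red herring: once Theorem~\ref{thm:hypermatching} hands you $\sigma$, there is no randomness left, so no probabilistic inequality is applied afterwards. All concentration is packaged inside the weight-function conclusion $\omega(\cM)\approx\omega(E(\cH))/\Delta$. Second, the paper precedes the hypergraph step with a cleanup: it deletes the small number of atypical edges of $A_0,A_i,G$ for which the relevant intersection sizes $|N_{A_i}(x_i)\cap N_G(v_0)|$, $|N_{A_0}(x_0)\cap N_G(v_i)|$, $e_H(N_{A_0}(v_0),N_{A_i}(v_i))$ are off. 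Without this, the total weights $\omega(E(A_0'))$ you need for~\ref{cond:4graphs i} and~\ref{cond:4graphs ii} cannot be computed to the required accuracy; you should add this step. Third, your case analysis for~\ref{cond:4graphs iii} can be simplified: each edge of $\Aupd_i$ receives exactly one appended colour (since $|c(e)|=1$ on $G$ and $x$ has at most one $H$-neighbour in $X_0$), so the ``both appended'' case is vacuous, and the ``both original'' case is bounded by $K$ by hypothesis. Only the mixed case $(\alpha\in C_{G[V_0,V_i]},\beta\in C_{A_i})$ needs a weight function $\omega_{\alpha,\beta}$, and there one shows $\omega_{\alpha,\beta}(E(A_0'))=O(n)$, whence $\omega_{\alpha,\beta}(\sigma)\le n^\eps$ by the small-weight clause of Theorem~\ref{thm:hypermatching}. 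Finally, to make $\cH$ genuinely $(t{+}2)$-uniform you need to pad $c(e)$ with dummy colours so that $|c(e)|=t$ exactly; this is trivial but should be said.
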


We split the proof into three steps. 
In Step~\ref{step:codegree} we remove non-typical vertices and edges in order to guarantee that certain neighbourhoods intersect appropriately.
In Step~\ref{step:hypergraph} we use a suitable hypergraph construction together with Theorem~\ref{thm:hypermatching} to obtain the required rainbow matching~$\sigma$. 
By defining certain weight functions in Step~\ref{step:weight functions}, we utilise the conclusions of Theorem~\ref{thm:hypermatching} to show that $\sigma$ can be chosen such that~\ref{cond:4graphs i}--\ref{cond:4graphs iii} hold.

\begin{proof}
Without loss of generality we may assume that $|c(e)|=t$ for all $e\in E(A_0)$. (Otherwise, we may simply add new `dummy' colours in such a way that the obtained colouring still satisfies the conditions of the lemma, and these colours can simply be deleted afterwards.)\COMMENT{We only need that $|c(e)|=t$ in order to obtain a $(t+2)$-uniform hypergraph in Step~\ref{step:hypergraph}.}

We also choose a new constant $\heps$ such that $\eps\ll\heps\ll\eps'$.

\begin{step}\label{step:codegree}
Removing non-typical vertices and edges
\end{step}

Let $ \Haux$ be an auxiliary supergraph of $H$ that is obtained by adding a maximal number of edges between $X_0$ and $X_i$ for every $i\in[r]$ subject to $\Haux[X_0,X_i]$ being a matching
(note that $e_{\Haux}(X_0,X_i)\geq (1-\eps)n$).
Next, we define subgraphs of $G$ and $(A_i)_{i\in[r]_0}$ to achieve that certain neighbourhoods intersect appropriately.

Let $\Abad_0$ be the spanning subgraph of $A_0$ such that an edge $x_0v_0\in E(A_0)$ belongs to $\Abad_0$ if there is some $i\in[r]$ with $\{x_i\}=N_{ \Haux}(x_0)\cap X_i$ and
\begin{align}\label{eq:codegree F_12'}
|N_{A_i}(x_i)\cap N_{G}(v_0)|\neq (d_i^Gd_i\pm 3\eps)|V_i|.
\end{align}
For $i\in[r]$, let $\Abad_i$ be the spanning subgraph of $A_i$ such that an edge $x_iv_i\in E(A_i)$ belongs to $\Abad_i$ if $\{x_0\}=N_{ \Haux}(x_i)\cap X_0$ and
\begin{align}\label{eq:codegree F_13'}
|N_{A_0}(x_0)\cap N_{G}(v_i)|\neq (d_i^Gd_0\pm 3\eps )|V_0|.
\end{align}
Let $\Gbad$ be the spanning subgraph of $G$ such that an edge $v_0v_i\in E(G[V_0,V_i])$ belongs to $\Gbad[V_0,V_i]$ for $i\in[r]$ whenever 
\begin{align}\label{eq:codegree F_23'}
e_H(N_{A_0}(v_0), N_{A_i}(v_i))\neq (d_0d_i \pm3\eps)e_H(X_0,X_i).
\end{align}

Using Fact~\ref{fact:regularity}, it is easy to see that $\Delta(\Abad_0)\le 3r\eps n$ and $\Delta(\Abad_i)\le 3\eps |V_i|$ for each $i\in[r]$. 
We also claim that for each $i\in[r]_0$, there exists $V_i^{bad}\In V_i$ with $|V_i^{bad}|\le 3r\eps n$, such that all vertices not in $V_0^{bad}\cup \dots \cup V_r^{bad}$ have degree at most $3r\eps n$ in~$\Gbad$. Indeed, fix $i\in[r]$ and let $\tX_0:=N_H(X_i)$ and $\tX_i:=N_H(X_0)$. Recall that $|\tX_0|=|\tX_i|=e_H(X_0,X_i)\ge \eps' n$. Using Fact~\ref{fact:regularity}, there exists $V_i^{bad}\In V_i$ with $|V_i^{bad}|\le 3\eps |V_i|$ such that all $v_i\in V_i\sm V_i^{bad}$ satisfy $|N_{A_i}(v_i)\cap \tX_i|=(d_i\pm \eps)|\tX_i|$.
Now, fix such a vertex~$v_i$. 
Let $U:=N_H(N_{A_i}(v_i))$. 
Using Fact~\ref{fact:regularity} again, we can see that all but at most $3\eps n$ vertices $v_0\in V_0$ satisfy $|N_{A_0}(v_0)\cap U|=(d_0\pm \eps)|U|=(d_0\pm \eps)(d_i\pm \eps)e_H(X_0,X_i)$. 
  Hence, $\dg_{\Gbad}(v_i)\le 3\eps n$. 
Similarly, one can see that there exists $V_{0,i}^{bad}\In V_0$ with $|V_{0,i}^{bad}|\le 3\eps n$ such that all $v_0\in V_0\sm V_{0,i}^{bad}$ satisfy $|N_{\Gbad}(v_0)\cap V_i|\le 3\eps n$. 
Let $V_0^{bad}:=\bigcup_{i=1}^r V_{0,i}^{bad}$. Then $V_0^{bad}, \dots, V_r^{bad}$ are as desired.

Now, let
\begin{align*}
	\Agood_0 &:=A_0[X_0,V_0\sm V_0^{bad}]-E(\Abad_0), \quad  \Agood_i:=A_i-E(\Abad_i),\\
	\Ggood_{0i} &:=G[V_0\sm V_0^{bad}, V_i]-E(G^{bad}[V_0, V_i\sm V_i^{bad}]),	 \quad\mbox{ for all $i\in[r]$}.
\end{align*}
Since we only seek an almost perfect rainbow matching $\sigma$ in $A_0$, we can remove the vertices $V_0^{bad}$ from $A_0$ and find $\sigma$ in $\Agood_0$. 
By keeping the vertices $V_i^{bad}$ for $i\in[r]$ and the corresponding edges $E(G[V_0,V_i^{bad}])$ in $\Ggood_{0i}$, we can guarantee that the candidacy graphs $\Agood_i$ are still spanning subgraphs of~$A_i$.

By Fact~\ref{fact:regularity robust}, we have that $\Ggood_{0i}$ is $(\heps,d_i^G)$-super-regular, that $\Agood_0$ is $(\heps,d_0)$-super-regular and that $\Agood_i$ is $(\heps,d_i)$-super-regular.
Crucially, we now have the following properties.
\begin{align}
\begin{split}\label{eq:codegree F_12}
&|N_{\Agood_i}(x_i)\cap N_{\Ggood_{0i}}(v_0)|=(d_i^Gd_i\pm\heps)|V_i|,
\\& \text{for all  $x_0v_0\in E(\Agood_0)$ whenever $\{x_i\}=N_{ \Haux}(x_0)\cap X_i$, $i\in[r]$;} 
\end{split}
\\[1em]
\begin{split}\label{eq:codegree F_13}
&|N_{\Agood_0}(x_0)\cap N_{\Ggood_{0i}}(v_i)|=(d_i^Gd_0\pm\heps)|V_0|, 
\\&\text{for all  $x_iv_i\in E(\Agood_i)$, $i\in[r]$,  whenever $\{x_0\}=N_{ \Haux}(x_i)\cap X_0$;}
\end{split}
\\[1em]
\begin{split}\label{eq:codegree F_23}
&e_H(N_{\Agood_0}(v_0), N_{\Agood_i}(v_i))=(d_0d_i\pm\heps)e_H(X_0,X_i),
\\&\text{for all  $v_0v_i\in E(\Ggood_{0i}-V_i^{bad})$ and $i\in[r]$.}
\end{split}
\end{align}

Indeed, consider $x_0v_0\in E(\Agood_0)$ with $\{x_i\}=N_{ \Haux}(x_0)\cap X_i$. By~\eqref{eq:codegree F_12'}, we have $|N_{A_i}(x_i)\cap N_{G}(v_0)|= (d_i^Gd_i\pm 3\eps)|V_i|$. Moreover, $v_0\notin V_0^{bad}$. 
Hence, $\dg_{\Abad_i}(x_i),\dg_{\Gbad}(v_0)\le 3r\eps n$, which implies~\eqref{eq:codegree F_12}.
Similar arguments hold for~\eqref{eq:codegree F_13} and~\eqref{eq:codegree F_23}. \COMMENT{For \eqref{eq:codegree F_13}, consider $x_iv_i\in E(\Agood_i)$ with $\{x_0\}=N_{ \Haux}(x_i)\cap X_0$. By \eqref{eq:codegree F_13'}, we have $|N_{A_0}(x_0)\cap N_{G}(v_i)|= (d_i^Gd_0\pm 3\eps )|V_0|$. If $v_i\in V_i^{bad}$, then all edges are added back, if $v_i\notin V_i^{bad}$, then degree is small. Plus edges to $V_0^{bad}$ might be lost.
For \eqref{eq:codegree F_23}, consider $v_0v_i\in E(\Ggood_{0i}-V_i^{bad})$. This means $v_0v_i\notin G^{bad}$. Hence, by \eqref{eq:codegree F_23'}, we have $e_H(N_{A_0}(v_0), N_{A_i}(v_i))= (d_0d_i \pm3\eps)e_H(X_0,X_i)$. Since $v_0\notin V_0^{bad}$, $v_0\in V(\Agood_0)$. Bad degrees are small.}

\begin{step}\label{step:hypergraph}
Constructing an auxiliary hypergraph
\end{step}

We aim to apply Theorem~\ref{thm:hypermatching} to find the required rainbow matching $\sigma$.
To this end, let $f_e:=e\cup c(e)$ for $e\in E(\Agood_0)$ and let $\cH$ be the $(t+2)$-uniform hypergraph $\cH$ with vertex set $X_0\cup V_0\cup C$\COMMENT{Could also define $\cH$ on $X_0\cup (V_0\sm V_0^{bad})\cup C$. However, note that the hyperedges are only defined for $e\in E(\Agood_0)$.}
and edge set $\{f_e\colon e\in E(\Agood_0) \}$.
A key property of the construction of $\mathcal{H}$ is a bijection between rainbow matchings $M$ in $\Agood_0$ and matchings $\cM$ in $\cH$ by assigning $M$ to $\cM=\{f_e\colon e\in M \}$.

In order to apply Theorem~\ref{thm:hypermatching}, we first establish upper bounds on $\Delta(\cH)$ and $\Delta^c(\cH)$. 
Since $\Agood_0$ is $(\heps,d_0)$-super-regular, $|X_0|=n$, and $c$ restricted to $\Agood_0$ is $(1+\eps)d_0n$-bounded, we conclude that
\begin{align}
\Delta(\mathcal{H})\leq (d_0+\heps)n.
\label{eq:Delta H}
\end{align}

Let $\Delta:=(d_0+\heps)n$.
Since $c$ is locally $\Lambda$-bounded, the codegree in $\cH$ of a vertex in $X_0\cup V_0$ and a colour in $C$ is at most $\Lambda$.
By assumption, the codegree in $\cH$ of two colours in $C$ is at most $K$.
For two vertices in $X_0\cup V_0$, the codegree in $\cH$ is at most~$1$.
Altogether, this implies that
\begin{align}
\Delta^c(\mathcal{H})
\leq \sqrt{n}
\leq \Delta^{1-\eps^2}.
\label{eq:Delta^c H}
\end{align}

Suppose $\cW$ is a set of given weight functions $\omega\colon E(\Agood_0)\to[\Lambda]_0$ with $|\cW|\leq n^5$ (which we will explicitly specify in Step~\ref{step:weight functions} to establish~\ref{cond:4graphs i}--\ref{cond:4graphs iii}.)
Note that every weight function $\omega\colon E(\Agood_0)\to[\Lambda]_0$ naturally corresponds to a  weight function $\omega_\cH\colon E(\cH)\to [\Lambda]_0$ by defining $\omega_\cH(f_e):= \omega(e)$.
If $\omega(E(A_0'))\geq n^{1+\eps/2}$, define $\tilde{\omega}:=\omega$. 
Otherwise, arbitrarily choose $\tilde{\omega}\colon E(\Agood_0)\to[\Lambda]_0$ such that $\omega\le \tilde{\omega}$ and $\tilde{\omega}(E(A_0'))= n^{1+\eps/2}$. 
By~\eqref{eq:Delta H} and~\eqref{eq:Delta^c H}, we can apply Theorem~\ref{thm:hypermatching} (with $(d_0+\heps)n,\eps^2,t+2,\set{\tilde{\omega}_\cH}{\omega \in \cW}$ playing the roles of $\Delta,\delta,r,\cW$) to obtain a matching $\cM$ in $\cH$ that corresponds to a rainbow matching $M$ in $\Agood_0$ that satisfies the following property by the conclusion of Theorem~\ref{thm:hypermatching}:
\begin{align}
\omega(M)&
=(1\pm \heps^{1/2})\frac{\omega(E(\Agood_0))}{d_0 n},\text{ for all $\omega\in\cW$ with $\omega(E(A_0'))\geq n^{1+\eps/2}$;} \label{eq:w(M) big}\\
\omega(M)&\le \max\Set{(1+\heps^{1/2})\frac{\omega(E(\Agood_0))}{d_0 n},n^\eps} \text{ for all $\omega\in\cW$.} \label{eq:w(M) upper bound}
\end{align}
\COMMENT{Note that $\omega(E(A_0'))\geq n^{1+\eps/2}\geq \Lambda n^{1+\eps^2}\geq \max_{e\in E(\cH)}\omega(e)\Delta^{1+\eps^2}$, and thus the conclusion of Theorem~\ref{thm:hypermatching} implies~\eqref{eq:w(M) big}.
\\For~\eqref{eq:w(M) upper bound} and $\omega\in\cW$ with $\omega(E(A_0'))<n^{1+\eps/2}$, note that $\omega(M)\leq\tilde{\omega}(M)\leq (1+\eps)\frac{n^{1+\eps/2}}{d_0n}\leq n^\eps$.}Let $\sigma\colon X_0^\sigma\to  V_0^\sigma$ be the function given by the matching $M$, where $X_0^\sigma=X_0\cap V(M)$ and $ V_0^\sigma=V_0\cap V(M)$.

One way to exploit~\eqref{eq:w(M) big} is to control the number of edges in $M$ between sufficiently large sets of vertices. 
To this end, for subsets $S\subseteq X_0$ and $T\subseteq V_0$ such that $|S|,|T|\geq2\heps n$, we define a weight function $\omega_{S,T}\colon E(\Agood_0)\to[\Lambda]_0$ with
\begin{align}\label{eq:def w_ST}
\omega_{S,T}(e):=
\begin{cases}
1 & \mbox{if } e\in E(\Agood_0[S,T\sm V_0^{bad}]),  \\
0 & \mbox{otherwise.}
\end{cases}
\end{align}
That is, $\omega_{S,T}(M)$ counts the number of edges between $S$ and $T$ that lie in~$M$. 
Since $A_0'$ is $(\heps,d_0)$-super-regular, \eqref{eq:w(M) big} implies (whenever $\omega_{S,T}\in \cW$) that
\begin{align}\label{eq:w_ST}
|\sigma\big(S\cap X_0^\sigma\big)\cap T|
=
\omega_{S,T}(M)
\stackrel{\eqref{eq:w(M) big}}{=}(1\pm \heps^{1/2})\frac{e(\Agood_0[S,T\sm V_0^{bad}])}{d_0 n}
=(1\pm 2\heps^{1/2})\frac{|S||T|}{n}.
\end{align}

\begin{step}\label{step:weight functions}
Employing weight functions to conclude \ref{cond:4graphs i}--\ref{cond:4graphs iii}
\end{step}

By Step \ref{step:hypergraph}, we may assume that~\eqref{eq:w(M) big} holds for a set of weight functions~$\cW$ that we will define during this step.
We will show that for this choice of $\cW$ the matching $\sigma\colon X_0^\sigma\to  V_0^\sigma$ as obtained in Step~\ref{step:codegree} satisfies \ref{cond:4graphs i}--\ref{cond:4graphs iii}.
Similar as in Definition~\ref{def:updated candidacy} (here with $H$ replaced by $H^+$), we define subgraphs $(\Agoodupd_i)_{i\in[r]}$ of $(\Agood_i)_{i\in[r]}$ as follows.
For every $i\in[r]$, let $\Agoodupd_i$ be the spanning subgraph of $\Agood_i$ containing precisely those edges $xv\in E(\Agood_i)$
for which the following holds: if $\{x_0\}=N_{ \Haux}(x)\cap X_0^\sigma$, then $\sigma(x_0)v\in E(\Ggood_{0i})$.
Since $\Agood_i\subseteq A_i$ and due to the construction of $\Agoodupd_i$, we conclude that $\Agoodupd_i$ is a spanning subgraph of the updated candidacy graph $\Aupd_i$ (with respect to $\sigma$) for every $i\in[r]$ (see~Definition~\ref{def:updated candidacy}).
By taking a suitable subgraph of $\Agoodupd_i$ we will later obtain the required candidacy graph~$A_i^{new}$.

\medskip
First, we show that the matching $M$ has size at least $(1-2\heps^{1/2})n$.
Adding $\omega_{X_0,V_0}$ as defined in~\eqref{eq:def w_ST} to $\cW$ and using~\eqref{eq:w_ST} yields
\begin{align}\label{eq:size M}
|M|\geq (1-2\heps^{1/2})n.
\end{align}
For every $i\in[r]$, define $X_i^H:=N_{ \Haux}(X_0^\sigma)\cap X_i$. Note that $|X_i^H|=(1\pm 3\heps^{1/2})|X_i|=(1\pm 4\heps^{1/2})n$.\COMMENT{$|X_i^H|\geq |X_0^\sigma|-\eps n\geq (1-2.5\heps^{1/2}) n \ge (1-3\heps^{1/2}) |X_i|$ by~\eqref{eq:size M}.}

\begin{substep}
Checking \ref{cond:4graphs i}
\end{substep}

In order to prove \ref{cond:4graphs i}, we first show that $\Agoodupd_i[X_i^H,V_i]$ is super-regular for every $i\in[r]$.
We will show that every vertex in $X_i^H\cup V_i$ has the appropriate degree, and that the common neighbourhood of most pairs of vertices in $V_i$ has the correct size, such that we can employ Theorem~\ref{thm: almost quasirandom} to guarantee the super-regularity of~$\Agoodupd_i[X_i^H,V_i]$.

For all $i\in[r]$ and for every vertex $x\in X_i^H$ with $\{x_0\}=N_{ \Haux}(x)\cap X_0^\sigma$, we have $\dg_{\Agoodupd_i[X_i^H,V_i]}(x)=|N_{\Agood_i}(x)\cap N_{\Ggood_{0i}}(\sigma(x_0))|$.
Hence, \eqref{eq:codegree F_12} implies that $$\dg_{\Agoodupd_i[X_i^H,V_i]}(x)=(d_i^Gd_i\pm\heps)|V_i|.$$
For $v\in V_i$, let $U_v:=N_{\Agood_i}(v)\subseteq X_i$. Observe that 
\begin{align}\label{eq:degree candidacy}
\dg_{\Agoodupd_i[X_i^H,V_i]}(v)=|\sigma(N_{ \Haux}(U_v)\cap X_0^\sigma)\cap N_{\Ggood_{0i}}(v)|, 
\end{align}
and $|N_{ \Haux}(U_v)\cap X_0|=|N_{\Agood_i}(v)|\pm\eps n=(d_i\pm2\heps)n$, and $|N_{\Ggood_{0i}}(v)|=(d_i^G\pm 2\heps)n$.
Adding for every $i\in[r]$ and every vertex $v\in V_i$, the weight function $\omega_{S,T}$ as defined in~\eqref{eq:def w_ST} for $S:=N_{ \Haux}(U_v)\cap X_0$ and $T:=N_{\Ggood_{0i}}(v)$ to $\cW$, we obtain that
\begin{align}\label{eq:superreg}
\dg_{\Agoodupd_i[X_i^H,V_i]}(v)
\stackrel{\eqref{eq:w_ST},\eqref{eq:degree candidacy}}{=}
(1\pm2\heps^{1/2}){|N_{ \Haux}(U_v)\cap X_0| |N_{\Ggood_{0i}}(v)|}n^{-1}
=(d_i^Gd_i\pm {\heps}^{1/3})|X_i^H|.
\end{align}
Note that these are at most $2rn$ weight functions $\omega_{S,T}$ that we added to $\cW$.

We will use Theorem~\ref{thm: almost quasirandom} to show that $\Agoodupd_i[X_i^H,V_i]$ is super-regular.
We call a pair of vertices $u,v\in V_i$ \emph{good} if $|N_{\Agood_i}(u,v)|=(d_i\pm\heps)^2|X_i|$, and
$|N_{\Ggood_{0i}}(u,v)|=(d_i^G\pm\heps)^2n$.
By the $\heps$-regularity of $\Agood_i$ and $\Ggood_{0i}$, using Fact~\ref{fact:regularity}, there are at most $2\heps |V_i|^2$ pairs $u,v\in V_i$ which are not good.
For every $i\in[r]$ and all good pairs $u,v\in V_i$, let $S_{u,v}:=N_{ \Haux}(N_{\Agood_i}(u,v))\cap X_0$ and $T_{u,v}:=N_{\Ggood_{0i}}(u,v)$. 
We add the weight function $\omega_{S_{u,v},T_{u,v}}$ as defined in~\eqref{eq:def w_ST} to $\cW$.
Observe that $|S_{u,v}|=|N_{\Agood_i}(u,v)|\pm\eps n=(d_i\pm2\heps)^2|X_i|$ and $|T_{u,v}|=(d_i^G\pm\heps)^2n$.
Note that these are at most $rn^2$ functions $\omega_{S_{u,v},T_{u,v}}$ that we add to $\cW$ in this way.
By~\eqref{eq:w_ST}, we obtain for all good pairs $u,v\in V_i$ that
\begin{align*}
\nonumber|N_{\Agoodupd_i[X_i^H,V_i]}(u,v)|
&=|\sigma(S_{u,v}\cap X_0^\sigma)\cap T_{u,v}|
=(1\pm 2\heps^{1/2})|S_{u,v}| |T_{u,v}|n^{-1}
\le (d_i^Gd_i+\heps^{1/3})^2|X_i^H|.
\end{align*}
Together with \eqref{eq:superreg}, we can apply Theorem~\ref{thm: almost quasirandom} and obtain that 
\begin{align}\label{eq:hat A superregular}
\text{$\Agoodupd_i[X_i^H,V_i]$ is $\big(\heps^{1/18},d_i^Gd_i\big)$-super-regular for every $i\in[r]$.}
\end{align}

In order to complete the proof of~\ref{cond:4graphs i}, for every $i\in[r]$, since $|X_i\sm X_i^H|\leq 3\heps^{1/2}|X_i|$, we can easily find a spanning subgraph $A_i^{new}$ of $\Agoodupd_i$ that is $(\eps',d_i^Gd_i)$-super-regular by deleting from every vertex $x\in X_i\sm X_i^H$ a suitable number of edges.
This establishes~\ref{cond:4graphs i}.

\begin{substep}
Checking \ref{cond:4graphs ii}
\end{substep}

Next, we show that for every $i\in[r]$, the edge set colouring $c^\sigma$ restricted to $\Agoodupd_i$ is $(1+\eps')d_i^Gd_i|X_i|$-bounded, which implies~\ref{cond:4graphs ii} because $A_i^{new}\subseteq A_i^\ast$.
Recall that we defined $c^\sigma$ (in~Definition~\ref{def:updated colouring}) such that for $xv\in E(\Agoodupd_i)$, we have $c^\sigma(xv)=c(xv)\cup c(\sigma(x_0)v)$ if $x$ has an $H$-neighbour $x_0\in X_0^\sigma$, and otherwise $c^\sigma(xv)=c(xv)$.
Since $c$ is colour-split, we may assume that $c_{\Agood_i}\colon E(\Agood_i)\to 2^{C_{\Agood_i}}$ is the edge set colouring $c$ restricted to $\Agood_i$ and $c_{\Ggood_{0i}}\colon E(\Ggood_{0i})\to C_{\Ggood_{0i}}$ is the edge-colouring~$c$ restricted to $\Ggood_{0i}$ such that $C_{\Agood_i}\cap C_{\Ggood_{0i}}=\emptyset$ for all $i\in[r]$. Fix $i\in[r]$. We have to show that for all $\alpha\in C_{A_i}\cup C_{\Ggood_{0i}}$, there are at most $(1+\eps')d_i^Gd_i|X_i|$ edges of~$\Agoodupd_i$ on which~$\alpha$ appears.

First, consider $\alpha\in C_{\Agood_i}$. Let $E_\alpha\subseteq E(\Agood_i)$ be the edges of $\Agood_i$ on which $\alpha$ appears. 
By assumption, $|E_\alpha|\leq (1+\eps)d_i|X_i|$. We need to show that $|E_\alpha\cap E(\Agoodupd_i)|\le (1+\eps')d_i^Gd_i|X_i|$.
To this end, we define a weight function $\omega_{\alpha}\colon E(\Agood_0)\to[\Lambda]_0$ by setting
$$\omega_\alpha(xv):=\big|\{v_i\in N_{\Ggood_{0i}}(v)\colon x_iv_i\in E_\alpha, xx_i \in E( \Haux[X_0,X_i]) \}\big|$$
for every $xv\in E(\Agood_0)$, and we add $\omega_\alpha$ to $\cW$. 
Note that 
\begin{align*}
|E_\alpha\cap E(\Agoodupd_i)| &\le \sum_{x_i\in X_i^H}\left|\set{v_i\in N_{\Ggood_{0i}}(\sigma(x))}{x_iv_i\in E_\alpha, xx_i\in E( \Haux[X_0,X_i])}\right|  +\Lambda|X_i\sm X_i^H| \\
   &\le \omega_{\alpha}(M)+ 3\heps^{1/2}\Lambda |X_i|.
\end{align*}
We now obtain an upper bound for $\omega_{\alpha}(M)$ using~\eqref{eq:w(M) upper bound}. 
For every edge $x_iv_i\in E_\alpha$ with $xx_i\in E( \Haux[X_0,X_i])$, condition~\eqref{eq:codegree F_13} states that
$$|N_{\Agood_0}(x)\cap N_{\Ggood_{0i}}(v_i)|=(d_i^Gd_0\pm\heps)n.$$
Hence, every such edge contributes weight $(d_i^Gd_0\pm\heps)n$ to $\omega_\alpha(E(\Agood_0))$. 
We obtain
\begin{align*}
&\omega_{\alpha}(E(\Agood_0))\leq (1+\eps)d_i|X_i|\cdot (d_i^Gd_0+\heps)n \le (d_0d_id_i^G+ 2\heps)|X_i|n.
\end{align*}
Now \eqref{eq:w(M) upper bound} implies that 
$\omega_{\alpha}(M)\leq (1+2\heps^{1/2})d_i^Gd_i|X_i|$ and hence $|E_\alpha\cap E(\Agoodupd_i)|\le (1+\eps')d_i^Gd_i|X_i|$.

\bigskip
Now, consider $\alpha\in C_{\Ggood_{0i}}$. Let $E_\alpha\subseteq E(\Ggood_{0i})$ be the set of edges of $\Ggood_{0i}$ on which $\alpha$ appears.
We define a weight function $\omega_{\alpha}\colon E(\Agood_0)\to[\Lambda]_0$ by setting
$$\omega_\alpha(xv):=\big|\{v_i\in N_{\Ggood_{0i}}(v)\colon vv_i\in E_\alpha, xx_i\in E(H[X_0,X_i]),x_iv_i\in E(\Agood_i) \}\big|$$
for every $xv\in E(\Agood_0)$, and we add $\omega_\alpha$ to $\cW$. Note that the number of edges of $\Agoodupd_i$ on which $\alpha$ appears is at most~$\omega_{\alpha}(M)$. 

In order to bound $\omega_{\alpha}(M)$, we again use~\eqref{eq:w(M) upper bound} and seek an upper bound for $\omega_{\alpha}(E(\Agood_0))$.
Since $c$ is $(1+\eps)e_G(V_0,V_i)/e_H(X_0,X_i)$-bounded on $G[V_0,V_i]$ by assumption, we have
$|E_\alpha|\leq (1+{\eps}^{1/2})d_i^G|X_i|n/e_H(X_0,X_i).$

For every edge $vv_i\in E_\alpha$ with $v_i\in V_i\sm V_i^{bad}$, condition~\eqref{eq:codegree F_23} implies that
\begin{align*}
e_H(N_{\Agood_0}(v), N_{\Agood_i}(v_i))=(d_0d_i\pm\heps)e_H(X_0,X_i).
\end{align*} 
Hence, every edge $vv_i\in E_\alpha$ with $v_i\in V_i\sm V_i^{bad}$ contributes weight $(d_0d_i\pm 
\heps)e_H(X_0,X_i)$ to $\omega_{\alpha}(E(\Agood_0))$. 
Since $\Delta(E_\alpha)\le \Lambda$ and $|V_i^{bad}|\leq 3r\eps n$, there are at most $3r\Lambda\eps n$ edges $vv_i\in E_\alpha$ with $v_i\in V_i^{bad}$, each of which contributes weight at most~$n$.
We conclude that
\begin{align*}
\omega_{\alpha}(E(\Agood_0))\leq \frac{(1+\eps^{1/2})d_i^G|X_i|n}{e_H(X_0,X_i)}\cdot (d_0d_i+ \heps)e_H(X_0,X_i)+3r\Lambda\eps n^2 
\leq (d_0d_id_i^G+ 2\heps)|X_i|n.
\end{align*}
Now \eqref{eq:w(M) upper bound} implies that 
$\omega_{\alpha}(M)\leq (1+\eps')d_i^Gd_i|X_i|$, completing the proof of~\ref{cond:4graphs ii}.

\begin{substep}
Checking \ref{cond:4graphs iii}
\end{substep}

Finally, we show that for all $i\in[r]$, $\alpha\in C_{\Ggood_{0i}}$ and $\beta\in C_{A_i'}$, the pair $\{\alpha,\beta \}$ appears on at most $n^\eps$ edges of $\Agoodupd_i$. This implies~\ref{cond:4graphs iii}, as the codegree of a pair in $C_{A_i'}$ is at most $K$ by assumption, and the codegree of a pair in $C_{\Ggood_{0i}}$ is~$0$. Fix $i\in[r]$, $\alpha\in C_{\Ggood_{0i}}$ and $\beta\in C_{A_i'}$.
Let 
\begin{align*}
E_{\alpha,\beta}:=\{v_0v_ix_i\colon v_0v_i\in E(\Ggood_{0i}), x_iv_i\in E(\Agood_i), c(v_0v_i)=\Set{\alpha}, \beta\in c(x_iv_i) \}
\end{align*}
and define the weight function $\omega_{\alpha,\beta}\colon E(\Agood_0)\to[\Lambda]_0$ by setting
\begin{align*}
\omega_{\alpha,\beta}(xv):=\big|\{vv_ix_i\in E_{\alpha,\beta} \colon xx_i\in E( \Haux[X_0,X_i])  \}\big|.
\end{align*}
Note that the number of edges of $\Agoodupd_i$ on which $\{\alpha,\beta\}$ appears is at most~$\omega_{\alpha,\beta}(M)$. In order to bound $\omega_{\alpha,\beta}(M)$, note that every triple $vv_ix_i\in E_{\alpha,\beta}$ contributes weight at most~$1$ to $\omega_{\alpha,\beta}(E(\Agood_0))$.
By assumption, $c$ is locally $\Lambda$-bounded and (globally) $(1+\eps)d_i|X_i|$-bounded on $A_i$, which implies that
$\omega_{\alpha,\beta}(E(\Agood_0))\le |E_{\alpha,\beta}|\leq (1+\eps)d_i\Lambda |X_i|\leq 2\Lambda n$.
Now, \eqref{eq:w(M) upper bound} implies that $\omega_{\alpha,\beta}(M) \leq n^\eps.$
Hence, for all $i\in[r]$, $\alpha\in C_{\Ggood_{0i}}$ and $\beta\in C_{A_i'}$, we add the corresponding weight function $\omega_{\alpha,\beta}$ to~$\cW$, which implies~\ref{cond:4graphs iii}. 
This completes the proof.
\end{proof}

\section{Proof of Lemma~\ref{lem:main}}\label{sec:blow-up}
In this section, we prove our rainbow blow-up lemma (Lemma~\ref{lem:main}).
First, we will deduce Lemma~\ref{lem:main} from a similar statement (Lemma~\ref{lem:embedding}), where we impose stronger conditions on $G$ and $H$. This reduction utilises the results of Section~\ref{sec:colour splitting}.
We will conclude with the proof of Lemma~\ref{lem:embedding}.

\begin{lemma}\label{lem:embedding}
Let $1/n\ll\eps\ll\gamma,d,1/r,1/\Lambda$.
Let $(H,G,(X_i)_{i\in[r]},(V_i)_{i\in[r]})$ be an $(\eps,d)$-super-regular blow-up instance. Assume further that
\begin{enumerate}[label={\rm (\roman*)}]
\item $|V_i|=(1\pm\eps)n$ for all $i\in[r]$;
\item for all $ij\in\binom{[r]}{2}$, the graph $H[X_i,X_j]$ is a matching of size at least $\gamma^2n$;
\item\label{8.1 colour-split} $c\colon E(G)\to C$ is a colour-split edge-colouring of $G$ such that $c$ is locally $\Lambda$-bounded and $c$ restricted to $G[V_i,V_j]$ is {$(1-\gamma)e_G(V_i,V_j)/e_H(X_i,X_j)$}-bounded for all $ij\in\binom{[r]}{2}$.
\end{enumerate}
Then there exists a rainbow embedding $\phi$ of $H$ into $G$ such that $\phi(x)\in V_i$ for all $i\in[r]$ and $x\in X_i$.
\end{lemma}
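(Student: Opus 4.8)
The plan is to use the two-phase strategy sketched in Section~\ref{sec:overview}: set aside a small random ``reservoir'', embed the bulk of $H$ approximately by iterating the Approximate Embedding Lemma (Lemma~\ref{lem:embedding lemma}), and then complete the embedding on the reservoir together with the small leftover by invoking the rainbow blow-up lemma for $o(n)$-bounded colourings (Lemma~\ref{lem:blow up matchings}). Fix a new constant $\mu$ (the relative size of the reservoir) and a chain $\eps\ll\eps_1\ll\dots\ll\eps_r\ll\mu\ll\gamma,d,1/r,1/\Lambda$, with $\mu$ also below the threshold appearing in Lemma~\ref{lem:blow up matchings}; the $\eps_k$ track the (bounded) deterioration of parameters over the $r$ rounds. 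Throughout, note that $\Delta(H)\le r$ since every $H[X_i,X_j]$ is a matching.

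\emph{Reservoir.} For each $i$, choose uniformly at random $V_i^\ast\In V_i$ and $X_i^\ast\In X_i$ with $|V_i^\ast|=|X_i^\ast|=\lceil\mu n\rceil$, and put $V_i':=V_i\sm V_i^\ast$, $X_i':=X_i\sm X_i^\ast$ (so $|X_i'|=|V_i'|$). Using McDiarmid's inequality, Facts~\ref{fact:regularity} and~\ref{fact:regularity robust} and a union bound, one finds an outcome in which $G[V_i',V_j']$ and $G[V_i^\ast,V_j^\ast]$ are $(\sqrt\eps,d)$-super-regular, every $v\in V_i$ has $(d\pm\sqrt\eps)|V_j^\ast|$ neighbours in $V_j^\ast$, the graphs $H[X_i^\ast,X_j^\ast]$ and $H[X_i',X_j']$ are matchings, and---crucially, and this is the only point where $\mu\ll\gamma$ is used---the restriction of $c$ to $\bigcup_{ij}G[V_i^\ast,V_j^\ast]$ is $\mu'(\mu n)$-bounded, where $\mu'$ is the threshold that Lemma~\ref{lem:blow up matchings} demands of a colouring on clusters of size $\mu n$: since $e_G(V_i,V_j)\le(1+o(1))dn^2$ and $e_H(X_i,X_j)\ge\gamma^2n$, a colour that appeared $\le(1-\gamma)dn/\gamma^2$ times now appears $\le(1+o(1))\mu^2(1-\gamma)dn/\gamma^2\le\mu'\mu n$ times.

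\emph{Approximate embedding, then completion.} For $k=1,\dots,r$ in turn, embed almost all of $X_k'$ into $V_k'$ by applying Lemma~\ref{lem:embedding lemma} with ``$X_0$''$\,=X_k'$, ``$V_0$''$\,=V_k'$, the still-unprocessed working clusters and the accumulated leftover pieces as the remaining clusters, and candidacy graphs that start as complete bipartite graphs with empty colour sets (so the initial codegree is $0$) and are updated as in Definitions~\ref{def:updated candidacy} and~\ref{def:updated colouring}. By induction, before round $k$ the candidacy graphs are super-regular \emph{with a uniform density}, their set colourings are suitably bounded with codegree at most $n^\eps$, and colour-splitness is preserved; since $r=\O(1)$ these invariants survive all rounds, and the embedding stays rainbow because $c$ is colour-split---the edges added in round $k$ towards an earlier cluster $j$ form a copy of $H[X_k,X_j]$ with colours in the private palette of $\{k,j\}$, which Lemma~\ref{lem:embedding lemma} keeps pairwise distinct. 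After round $k$ the leftover $L_k:=X_k'\sm\mathrm{dom}(\sigma_k)$ has size at most $\eps_rn$. Let $\psi_0$ be the resulting partial embedding, viewed as a rainbow embedding of the graph $H^\circ$ obtained from $H$ by deleting all edges inside the bulk $X_{\mathrm{bulk}}:=\bigcup_k\mathrm{dom}(\sigma_k)$, so $X_{\mathrm{bulk}}$ is independent in $H^\circ$. Put $X_i^{\mathrm{left}}:=X_i^\ast\cup L_i$ and $V_i^{\mathrm{left}}:=V_i^\ast\cup(V_i'\sm\mathrm{im}(\sigma_i))$; these have equal size, $G[V_i^{\mathrm{left}},V_j^{\mathrm{left}}]$ is super-regular, $H^\circ[X_i^{\mathrm{left}},X_j^{\mathrm{left}}]$ is a matching, and the Step~2 candidacy graphs restricted to $X_i^{\mathrm{left}}\times V_i^{\mathrm{left}}$ stay super-regular and satisfy condition (iv) of Lemma~\ref{lem:blow up matchings} by construction, while (v) is vacuous since, $c$ being colour-split, the at most $\Delta(H)\le r$ bulk-neighbours of a leftover vertex lie in distinct clusters, hence in disjoint palettes. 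Lemma~\ref{lem:blow up matchings} then extends $\psi_0$ to a rainbow embedding of $H^\circ$, which is also a rainbow embedding of $H$ respecting the partition.

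\emph{Main obstacle.} The delicate part is the interface between the approximate and absorbing phases: one must arrange the reservoir and run the approximate embedding so that \emph{every} hypothesis of Lemma~\ref{lem:blow up matchings} holds on the absorbing region at the end. The key quantitative point is that the colouring there must be sparse, which forces $\mu\ll\gamma$ and is the reason Lemma~\ref{lem:embedding} is stated in the colour-split, $(1-\gamma)$-bounded regime; the edges between the bulk and the reservoir need particular care, as does the claim that the final candidacy graphs restricted to the leftover remain super-regular despite depending on the random choices made during the approximate embedding. All remaining work is bookkeeping: propagating the super-regularity, boundedness and codegree invariants of the candidacy graphs (with their set colourings) through the $r$ applications of Lemma~\ref{lem:embedding lemma}.
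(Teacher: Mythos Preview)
Your two-phase outline (approximate embedding via Lemma~\ref{lem:embedding lemma}, then completion via Lemma~\ref{lem:blow up matchings}) matches the paper, but the way you set aside the reservoir does not, and this creates a genuine gap. You reserve a random \emph{vertex} set $V_i^\ast$; the paper instead reserves a random \emph{colour} set, splitting $G$ into edge-disjoint spanning subgraphs $G_A,G_B$ by assigning each colour class to $G_B$ independently with probability~$\gamma$ (so $G_A$ and $G_B$ use disjoint palettes). The issue with your version is this: after the bulk embedding, the edges $\psi_0(x)\psi_0(y)$ for $xy\in E(H[X_{\mathrm{bulk}}])$ have used a set $C_{\mathrm{bulk}}$ of colours. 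When you then apply Lemma~\ref{lem:blow up matchings} on $G[V^{\mathrm{left}}]$ to embed $H^\circ$, nothing prevents the extension from re-using colours in $C_{\mathrm{bulk}}$; Lemma~\ref{lem:blow up matchings} only guarantees that the $H^\circ$-edges are pairwise rainbow, not that they avoid a prescribed list. Your final clause (``which is also a rainbow embedding of $H$'') therefore does not follow. One might try to repair this by deleting all $C_{\mathrm{bulk}}$-coloured edges from the absorbing host graph, but this need not preserve super-regularity: within a pair $(i,j)$ the bulk may use up to $e_H(X_i,X_j)$ colours, which can be of order $n$, and since $c$ is only locally $\Lambda$-bounded this can strip up to $\Lambda n$ edges from a single vertex of $V_i^{\mathrm{left}}$, whereas its degree in $G[V_i^{\mathrm{left}},V_j^{\mathrm{left}}]$ is only about $d\mu n\ll \Lambda n$.

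There is a second, related difficulty. For Lemma~\ref{lem:blow up matchings} you need super-regular candidacy graphs on $(X_i^{\mathrm{left}},V_i^{\mathrm{left}})$ encoding condition~(iv) there, namely $N_{A_i}(x_i)\subseteq N_G(\psi_0(x_0))$ for all bulk-neighbours $x_0$ of~$x_i$. Since $V_i^{\mathrm{left}}$ is (up to a negligible leftover) the reservoir $V_i^\ast$, which you \emph{excluded} from the working clusters in every call to Lemma~\ref{lem:embedding lemma}, no such candidacy information has been maintained on the $V_i^\ast$-side; the ``Step~2 candidacy graphs restricted to $X_i^{\mathrm{left}}\times V_i^{\mathrm{left}}$'' you invoke were never updated there. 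Also, your proposed ``remaining clusters'' in round $k$ include the accumulated leftover pieces $L_j$ of size at most $\eps_r n$, which violate the size hypothesis $|V_i|=(1\pm\eps)n$ of Lemma~\ref{lem:embedding lemma}. The paper handles all of this at once: it carries a second family of candidacy graphs $B_i$ on full copies of $(X_i,V_i)$, defined with respect to $G_B$, through every round of Lemma~\ref{lem:embedding lemma} (with dummy colourings); only \emph{after} the bulk embedding does it randomly select the absorbing vertex sets (by ``unembedding'' a $\mu$-fraction), and the $o(\mu n)$-boundedness required by Lemma~\ref{lem:blow up matchings} then comes from local $\Lambda$-boundedness together with the fact that $G_B$ shares no colours with the bulk.
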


\lateproof{Lemma~\ref{lem:main}}
We split the proof into three steps. In Step~\ref{step:1}, we apply Lemma~\ref{lem:colour splitting} in order to obtain a spanning subgraph $G_1\subseteq G$ such that the restricted edge-colouring is colour-split.
In Step~\ref{step:2}, we apply Lemma~\ref{lem:Hajnal Szemeredi partitioning} in order to refine the partitions of $G_1$ and $H$ in such a way that the vertex classes of $H$ are $2$-independent. Then, in Step~\ref{step:3}, we can apply Lemma~\ref{lem:embedding} to complete the proof.

In view of the statement, we may assume that $1/n\ll \eps \ll \gamma\ll d,1/r,1/\Delta,1/\Lambda$.
Choose new constants $\eps_1,\eps_2,\gamma',d_1,d_2$ with $\eps\ll\eps_1\ll\eps_2\ll d_2\ll\gamma'\ll d_1\ll\gamma$.

\begin{step}\label{step:1}
Colour-splitting
\end{step}

First, let $H_1$ be a supergraph of $H$ on $V(H)$ such that $e_{H_1-H}(X_i,X_j)\le \gamma^2n \le e_{H_1}(X_i,X_j)$ for all $ij\in\binom{[r]}{2}$ and $\Delta(H_1)\leq \Delta':=\Delta+r$.\COMMENT{Let $H'$ be a graph consisting of a matching of size $\gamma^2n$ between each pair of clusters. Let $H_1$ be the (simple) union of $H$ and $H'$. Then clearly the edge condition is satisfied and $\Delta(H_1)\le \Delta(H)+\Delta(H')\le \Delta + r$}
We claim that for all $\alpha\in C$, we have
$$\sum_{ij\in \binom{[r]}{2}}e^\alpha_G(V_i,V_j)e_{H_1}(X_i,X_j)\leq \left(1-\frac{\gamma}{2}\right)dn^2.$$
Indeed, since $c$ is locally $\Lambda$-bounded, we obtain that $e^\alpha_G(V_i,V_j)e_{H_1-H}(X_i,X_j)
\leq 2\Lambda n \cdot \gamma^2 n 
\leq r^{-2} \cdot \gamma dn^2/2$  for each $ij\in \binom{[r]}{2}$.
Hence, we can apply Lemma~\ref{lem:colour splitting} to $(H_1,G,(X_i)_{i\in[r]},(V_i)_{i\in[r]})$ (with $\gamma/2,\Delta'$ playing the roles of $\gamma,\Delta$), and obtain a spanning subgraph $G_1$ of $G$ such that $(H_1,G_1,(X_i)_{i\in[r]},(V_i)_{i\in[r]})$ is an $(\eps_1,d_1)$-super-regular blow-up instance, and the colouring $c_1:=c|_{E(G_1)}$ is colour-split and
$$\left(1-\frac{\gamma}{4}\right)\frac{e_{G_1}(V_i,V_j)}{e_{H_1}(X_i,X_j)}\text{-bounded}$$
for each bipartite subgraph $G_1[V_i,V_j]$.
Clearly, a rainbow embedding of $H_1$ into $G_1$ also yields a rainbow embedding of $H$ into $G$.

\begin{step}\label{step:2}
Refining the vertex partitions
\end{step}

We can now apply Lemma~\ref{lem:Hajnal Szemeredi partitioning} to the $(\eps_1,d_1)$-super-regular blow-up instance $(H_1,G_1,(X_i)_{i\in[r]},(V_i)_{i\in[r]})$ with edge-colouring $c_1$ and $\gamma',\Delta'$ playing the roles of $\gamma,\Delta$.
Hence, we obtain an $(\eps_2,d_2)$-super-regular blow-up instance $(H_2,G_2,(X_{i,j})_{i\in [r],j\in[\Delta'^2]},(V_{i,j})_{i\in [r],j\in[\Delta'^2]})$ such that for $n':=n/\Delta'^2$ we have that

\begin{enumerate}[label=(\alph*)]
\item $(X_{i,j})_{j\in[\Delta'^2]}$ is partition of $X_i$ and $(V_{i,j})_{j\in[\Delta'^2]}$ is partition of $V_i$ for every $i\in[r]$, and $|X_{i,j}|=|V_{i,j}|=(1\pm\eps_2)n'$ for all $i\in[r], j\in[\Delta'^2]$;
\item $H_2$ is a supergraph of $H_1$ on $V(H)$ such that $H_2[X_{i_1,j_1},X_{i_2,j_2}]$ is a matching of size at least $\gamma'^4 n'$ for all $i_1,i_2\in[r], j_1,j_2\in[\Delta'^2], (i_1,j_1)\neq(i_2,j_2)$;
\item $G_2$ is a graph on $V(G)$ such that $G_2[V_{i_1,j_1},V_{i_2,j_2}]\subseteq G_1[V_{i_1},V_{i_2}]$ for all distinct $i_1,i_2\in[r]$ and all $j_1,j_2\in[\Delta'^2]$;
\item $c_2$ is an edge-colouring of $G_2$ such that $c_2|_{E(G_1)\cap E(G_2)}=c_1|_{E(G_1)\cap E(G_2)}$, and $c_2$ is colour-split with respect to the partition $(V_{i,j})_{i\in [r],j\in[\Delta'^2]}$, and $c_2$ is locally $\Lambda$-bounded, and $c_2$ restricted to $G_2[V_{i_1,j_1},V_{i_2,j_2}]$ is $$\left(1-\frac{\gamma'}{2}\right)\frac{e_{G_2}(V_{i_1,j_1},V_{i_2,j_2})}{e_{H_2}(X_{i_1,j_1},X_{i_2,j_2})}\text{-bounded}$$ for all $i_1,i_2\in[r], j_1,j_2\in[\Delta'^2], (i_1,j_1)\neq(i_2,j_2)$.
\end{enumerate}
Again, a $c_2$-rainbow embedding of $H_2$ into $G_2$ also yields a $c_1$-rainbow embedding of $H_1$ into~$G_1$.

\begin{step}\label{step:3}
Applying Lemma~\ref{lem:embedding}
\end{step}

We can now complete the proof by applying Lemma~\ref{lem:embedding} as follows:
\begin{center}
\begin{tabular}{c|c|c|c|c|c|c|c|c|c}
 $n'$ & $\eps_2$ & $\gamma'^2$ & $d_2$ & $r\Delta'^2$ & $\Lambda$ & $H_2$ &  $G_2$ &  $(X_{i,j})_{i\in [r],j\in[\Delta'^2]}$ &  $(V_{i,j})_{i\in [r],j\in[\Delta'^2]}$ \\ \hline
 $n$ & $\eps$ & $\gamma$ & $d$ & $r$ & $\Lambda$ & $H$ &  $G$ & $(X_i)_{i\in[r]}$ & $(V_i)_{i\in[r]}$\vphantom{\Big(}
\end{tabular}
\end{center}
This yields a rainbow embedding of $H_2$ into $G_2$, and hence of $H$ in $G$.
\endproof

We now deduce Theorem~\ref{thm:quasirandom} from Lemma~\ref{lem:main} by partitioning $H$ using the Hajnal--Szemer\'edi theorem (Theorem~\ref{thm:HS}) and $G$ randomly.

\lateproof{Theorem~\ref{thm:quasirandom}} 
Let $r:=\Delta+1$. We may assume that $\eps$ is sufficiently small and $n$ is sufficiently large.
By applying Theorem~\ref{thm:HS} to $H$, we obtain a partition $(X_i)_{i\in[r]}$ of $V(H)$ into independent sets with $|X_i|\in\{\lfloor\frac{n}{r}\rfloor,\lceil\frac{n}{r}\rceil\}$.
We claim that there exists a partition $(V_i)_{i\in[r]}$ of~$V(G)$ such that 
\begin{enumerate}[label={\rm (\roman*)}]
  \item\label{quasi sr} $G[V_i,V_j]$ is $(2r\eps,d)$-super-regular for all $ij\in\binom{[r]}{2}$;
	\item \label{quasi colours} for all $\alpha\in C$ with $e^\alpha(G)\ge n^{3/4}$, we have $e^\alpha_G(V_i,V_j)=(1\pm\eps)2e^\alpha(G)/r^2$ for all $ij\in\binom{[r]}{2}$;
	\item\label{quasi sizes} $|V_i|=|X_i|$ for all $i\in[r]$.
\end{enumerate}
That such a partition exists can be seen using a probabilistic argument: For each $v\in V(G)$ independently, choose a label $i\in[r]$ uniformly at random and put $v$ into~$V_i$.
Using Chernoff's inequality (Theorem~\ref{thm:general chernoff simple}) for~\ref{quasi sr} and McDiarmid's inequality (Theorem~\ref{thm:McDiarmid}) for~\ref{quasi colours}, it is easy to check that~\ref{quasi sr} and~\ref{quasi colours} are satisfied with probability at least $1-\eul^{-n^{1/3}}$. Moreover, \ref{quasi sizes} holds with probability $\Omega(n^{-r/2})$. Hence, such a partition exists.

Therefore, we conclude that $(H,G,(X_i)_{i\in[r]},(V_i)_{i\in[r]})$ is a $(2r\eps,d)$-super-regular blow-up instance. Consider $\alpha\in C$. If $e^\alpha(G)\le n^{3/4}$, then condition~\ref{cond3 main} in Lemma~\ref{lem:main} clearly holds. If $e^\alpha(G)\ge n^{3/4}$, we use \ref{quasi colours} to see that
\begin{align*}
\sum_{ij\in \binom{[r]}{2}} e^\alpha_G(V_i,V_j)e_H(X_i,X_j)  &= (1\pm \eps)2e^\alpha(G)e(H)/r^2 \le (1+\eps)(1-\gamma)2e(G)/r^2 \le (1-\gamma/2)d(n/r)^2.
\end{align*}
Thus, we can apply Lemma~\ref{lem:main} and obtain a rainbow copy of $H$ in $G$.
\endproof

It remains to prove Lemma~\ref{lem:embedding}. The proof splits into four steps as follows.
In Step~\ref{step:colour splitting}, we split~$G$ into two spanning subgraphs $G_A$ and $G_B$ with disjoint colour sets.
In Step~\ref{step:candidacy graphs}, we define the necessary `candidacy graphs' that we track during the approximate embedding in Step~\ref{step:induction}. 
We then iteratively apply Lemma~\ref{lem:embedding lemma} in Step~\ref{step:induction} to find approximate rainbow embeddings of $X_i$ into $V_i$ using only the edges of $G_A$.
All those steps have to be performed carefully such that we can employ Lemma~\ref{lem:blow up matchings} in Step~\ref{step:absorbing} and use the reserved set of colours of $G_B$ to turn the approximate rainbow embedding into a complete one.

\lateproof{Lemma~\ref{lem:embedding}}
In view of the statement, we may assume that $\gamma\ll d,1/r,1/\Lambda$.
Choose new constants $\eps_0,\eps_1,\ldots,\eps_{r+1},\mu$ with $\eps\ll\eps_0\ll\eps_1\ll\cdots\ll\eps_{r+1}\ll\mu\ll\gamma$.\COMMENT{deleted $\eps'$ and replaced occurences with $\eps_0$}
For $i\in[r]$, let 
\begin{align*}
\bX_i  &:=\textstyle\bigcup_{j\in [i]}X_j,            &           \bV_i  &:=\textstyle\bigcup_{j\in[i]}V_j.              
\end{align*}

\begin{step}\label{step:colour splitting}
Colour splitting
\end{step}

In order to reserve an exclusive set of colours for the application of Lemma~\ref{lem:blow up matchings}, we randomly partition the edges of $G$ into two spanning subgraphs $G_A$ and $G_B$ as follows.
For each colour class of $G$ independently, we add its edges to $G_A$ with probability $1-\gamma$ and otherwise to~$G_B$. 
Let $d_A:=(1-\gamma)d$ and $d_B:=\gamma d$.
By Lemma~\ref{lem:regularity colour splitting},\COMMENT{where $Y_\alpha$ is a Bernoulli random variable with parameter $1-\gamma$ (or $\gamma$) for every colour $\alpha$} we conclude that with probability at least $1-1/n$,
 \begin{align}\label{eq:G_Z superregular}
 \begin{minipage}[c]{0.8\textwidth}\em
$G_Z[V_i,V_j]$ is $(\eps_0^2,d_Z)$-super-regular for all $ij\in\binom{[r]}{2}$, $Z\in\{A,B\}$.
 \end{minipage}\ignorespacesafterend
\end{align}
Hence, we may assume that $G$ is partitioned into $G_A$ and $G_B$ such that~\eqref{eq:G_Z superregular} holds.

\begin{step}\label{step:candidacy graphs}
Candidacy graphs
\end{step}

We want to show that there is a partial rainbow embedding of $H[\bX_r]$ into $G_A[\bV_r]$ that maps almost all vertices of $X_i$ into $V_i$ for every $i\in[r]$. Moreover, we need to ensure certain conditions for the remaining unembedded vertices in order to finally apply Lemma~\ref{lem:blow up matchings}.
We will achieve this by iteratively applying the Approximate Embedding Lemma (Lemma~\ref{lem:embedding lemma}) in Step~\ref{step:induction}. 
In order to formally state the induction hypothesis, we need some preliminary definitions.

For $t\in[r]_0$, we call $\phi_t\colon X_1^{\phi_t}\cup\ldots\cup X_t^{\phi_t}\to V_1^{\phi_t}\cup\ldots\cup V_t^{\phi_t}$ a \emph{$t$-partial embedding} if $X_i^{\phi_t}\subseteq X_i$, $V_i^{\phi_t}\subseteq V_i$, and $\phi_t(X_i^{\phi_t})=V_i^{\phi_t}$ for every $i\in[t]$, such that $\phi_t$ is an embedding of $H[X_1^{\phi_t}\cup\ldots\cup X_t^{\phi_t}]$ into $G_A[V_1^{\phi_t}\cup\ldots\cup V_t^{\phi_t}]$.
For brevity, define
\begin{align*}
\bX_t^{\phi_t} &:=\textstyle\bigcup_{i\in [t]}X_i^{\phi_t},   &           
\bV_t^{\phi_t} &:=\textstyle\bigcup_{i\in [t]}V_i^{\phi_t}.
\end{align*}

Given a $t$-partial embedding $\phi_t$, we define two kinds of bipartite auxiliary graphs: For each $i\in[r]\sm[t]$, we define a graph $A_i(\phi_t)$ with bipartition $(X_i,V_i)$ that tracks the still available images of a vertex $x\in X_i$ in~$G_A$, which will be used to extend the $t$-partial rainbow embedding~$\phi_t$ to a $(t+1)$-partial rainbow embedding $\phi_{t+1}$ via Lemma~\ref{lem:embedding lemma} in Step~\ref{step:induction}. Moreover, for each $i\in[r]$, we define a bipartite graph $B_i(\phi_t)$ that tracks the potential images of a vertex $x\in X_i$ in~$G_B$, which will be used for the completion via Lemma~\ref{lem:blow up matchings} in Step~\ref{step:absorbing}. Here, we keep tracking potential images of vertices even if they have been embedded, since in Step~\ref{step:absorbing}, we will actually `unembed' a few vertices.

When extending $\phi_t$ to $\phi_{t+1}$, we intend to update the graphs $A_i(\phi_t)$ and $B_i(\phi_t)$ simultaneously using Lemma~\ref{lem:embedding lemma}.
In order to facilitate this, we define $B_i(\phi_t)$ on a copy $(X_i^B,V_i^B)$ of the bipartition $(X_i,V_i)$.
For every $i\in[r]$, let $X_i^B$ and $V_i^B$ be disjoint copies of $X_i$ and $V_i$, respectively.
Let $\pi$ be the bijection that maps a vertex in $\bigcup_{i\in[r]}(X_i\cup V_i)$ to its copy in $\bigcup_{i\in[r]}(X_i^B\cup V_i^B)$. 
Let~$G^+$ and~$H^+$ be supergraphs of $G_A$ and~$H$ with vertex partitions $(V_1,\ldots,V_r,V_1^B,\ldots,V_r^B)$ and $(X_1,\ldots,X_r,X_1^B,\ldots,X_r^B)$, respectively, and edge sets
\begin{align*}
E(G^+) &:= E(G_A) \cup \left\{u\pi(v),v\pi(u)\colon uv\in E(G_B) \right\}\cup E_G^\ast, \\ 
E(H^+) &:= E(H) \cup \left\{x\pi(y),y\pi(x)\colon xy\in E(H) \right\}\cup E_H^\ast,
\end{align*}
where we added for convenience a suitable set $E_G^\ast\subseteq \bigcup_{i\in [r]}\{uv\colon u\in V_i,v\in V_i^B \}$ such that $G^+[V_i,V_i^B]$ is $(\eps_0,d_B)$-super-regular for all $i\in[r]$, and the set $E_H^\ast:=\{x\pi(x)\colon x\in V(H)\}$ so that $H^+[X_i,X_i^B]$ is a perfect matching for all $i\in[r]$.\COMMENT{In order to apply Lemma~\ref{lem:embedding lemma} to $G^+-\cV_t$ in the $(t+1)$-th step of the induction, we also need that $G^+[V_{t+1},V_{t+1}^B]$ is superregular to obtain an embedding instance, and $e_{H^+}(X_{t+1},X_{t+1}^B)\geq \eps_0 n$. Only due to technical reasons.}
Note that $G^+[V_i,V_j]=G_A[V_i,V_j]$, whereas $G^+[V_i,V_j^B]$ and $G^+[V_i^B,V_j]$ are isomorphic to $G_B[V_i,V_j]$ for all $ij\in\binom{[r]}{2}$.

We now define $A_i(\phi_t)$ and $B_i(\phi_t)$. Let $X_i^A:=X_i$ and $V_i^A:=V_i$ for every $i\in[r]$.
For $Z\in\{A,B\}$ and $i\in[r]$, we say that $v_i\in V_i^Z$ is a \emph{candidate for
$x_i\in X_i^Z$ (given~$\phi_t$)} if 
\begin{align}\label{eq:def candidates}
\phi_t(N_{H^+}(x_i)\cap \bX_t^{\phi_t})
\subseteq N_{G^+}(v_i),
\end{align}
\COMMENT{Note that if $N_{H^+}(x_i)\cap \bX_t^{\phi_t}=\emptyset$, then $v_i$ is a trivial candidate for $x_i$.} and we define $Z_i(\phi_t)$ as the bipartite graph with partition $(X_i^Z,V_i^Z)$ and edge set\COMMENT{Note that if $Z=A$, we could restrict $i$ to $i\in[r]\sm[t]$ since $A_i^t$ for $i\in[r]\sm[t]$ incorporate the `embedding' candidacy graphs for future rounds given $\phi_t$; whereas $B_i^t$ incorporate the `absorber' candidacy graphs such that also for $i\leq t$, the candidacy graph $B_i^t$ has to be updated when we extend $\phi_t$ to $\phi_{t+1}$.}
$$E(Z_i(\phi_t)):=\big\{x_iv_i\colon x_i\in X_i^Z, v_i\in V_i^Z, \text{ and $v_i$ is a candidate for $x_i$ given $\phi_t$} 
\big\}.$$
We call any spanning subgraph of $Z_i(\phi_t)$ a \emph{candidacy graph}.

Next, we define edge set colourings for these candidacy graphs.
For $i\in[r]\sm[t]$, we assign to every edge $e=x_iv_i\in E(A_i(\phi_t))$ a colour set $c_t(e)$ of size at most~$t$, which represents the colours that would be used if we were to embed $x_i$ at $v_i$ in the next step.
More precisely, for every $i\in[r]\sm[t]$ and every edge $x_iv_i\in E(A_i(\phi_t))$, we set
\begin{align}\label{eq:c_t}
c_t(x_iv_i):= c\big(E\big(G_A\big[\phi_t(N_{H}(x_i)\cap\bX_t^{\phi_t}),\{v_i\}\big]\big)\big).
\end{align}
Tracking this set will help us to ensure that the embedding is rainbow when we extend $\phi_t$ to~$\phi_{t+1}$. 
Since $|N_{H}(x_i)\cap\bX_t^{\phi_t}|\leq t$ and $|c(e)|=1$ for all $e\in E(G_A)$, we have $|c_t(x_iv_i)|\leq t$.

For the candidacy graphs $B_i(\phi_t)$, we merely need to know that they maintain super-regularity during the inductive approximate embedding (see \ind{t} below). Hence, for convenience, we set $c_t(e):=\emptyset$ for every $e\in E(B_i(\phi_t))$.

We also assign artificial dummy colours to the edges of $E(G^+)\sm E(G_A)$ as follows.
Let $c^{art}\colon\binom{V(G^+)}{2}\to C^{art}$ be a rainbow edge-colouring of all possible edges in $V(G^+)$ such that $C^{art}\cap C=\emptyset$. Define $c^+$ on $E(G^+)$ by setting $c^+(e):=c(e)$ if $e\in E(G_A)$ and $c^+(e):=c^{art}(e)$ otherwise.

\begin{step}\label{step:induction}
Induction
\end{step}

We inductively prove the following statement \ind{t} for all $t\in[r]_0$.
\begin{itemize}
\item[\ind{t}.] There exists a $t$-partial rainbow embedding $\phi_t\colon \bX_t^{\phi_t}\to \bV_t^{\phi_t}$ with $|X_s^{\phi_t}|=|V_s^{\phi_t}|\geq (1-\eps_t)|X_s|$ for all $s\in [t]$, and for all $Z\in\{A,B\}$, there exists a candidacy graph\COMMENT{so in particular spanning} $Z_i^t\subseteq Z_i(\phi_t)$ such that 
\begin{enumerate}[label=(\alph*)]
\item\label{candidacy superregular}$A_i^t$ is $(\eps_t,d_A^t)$-super-regular for all $i\in[r]\sm[t]$;
\item\label{B-candidacy superregular} $B_i^t$ is $(\eps_t,d_B^{t})$-super-regular for all $i\in[r]$;
\item\label{c_t bounded} the colouring $c_t$ restricted to $A_i^t$ is $(1+\eps_t)d_A^t|X_i|$-bounded and has codegree at most $n^{1/3}$ for all $i\in[r]\sm[t]$.
\end{enumerate}
\end{itemize}
\medskip
The statement \ind{0} holds for $\phi_0$ being the empty function:
Clearly, for all $Z\in\{A,B\}$, $i\in[r]$, the candidacy graph $Z_i(\phi_0)$ is complete bipartite, and by~\eqref{eq:c_t}, we have $c_0(e)=\emptyset$ for all $e\in E(A_i(\phi_0))$, implying \ind{0}. 

Hence, we may assume the truth of \ind{t} for some $t\in[r-1]_0$ and let $\phi_t\colon \bX_t^{\phi_t}\to \bV_t^{\phi_t}$ and $A_i^t,B_i^t$ be as in \ind{t}.
We will now extend $\phi_t$ to a $(t+1)$-partial rainbow embedding  $\phi_{t+1}$ such that \ind{t+1} holds.
Note that any matching $\sigma\colon X_{t+1}^\sigma\to V_{t+1}^\sigma$ in $A^t_{t+1}$ with $X_{t+1}^\sigma \subseteq X_{t+1}$ and $V_{t+1}^\sigma\subseteq V_{t+1}$  induces an embedding $\phi_{t+1}\colon \bX_{t}^{\phi_t}\cup X_{t+1}^\sigma \to \bV_{t}^{\phi_t}\cup V_{t+1}^\sigma $ which extends $\phi_t$ to a $(t+1)$-partial embedding as follows:
\begin{align}
\phi_{t+1}(x):=\begin{cases}
\phi_t(x) & \mbox{if } x\in \bX_{t}^{\phi_t}, \\
\sigma(x) & \mbox{if } x\in X_{t+1}^\sigma.
\end{cases} \label{def new phi}
\end{align}
The following is a key observation: Since $c$ is colour-split and by definition of the candidacy graph $A^t_{t+1}$ and the colouring~$c_t$ on $E(A^t_{t+1})$, whenever $\sigma$ is a \emph{rainbow} matching in $A^t_{t+1}$, then $\phi_{t+1}$ is a $(t+1)$-partial \emph{rainbow} embedding.

Now, we aim to apply Lemma~\ref{lem:embedding lemma} in order to obtain an almost perfect rainbow matching~$\sigma$ in~$A_{t+1}^t$. 
Let $H^{t+1}:= H^+-\bX_{t}$ and let $G^{t+1}:=G^+-\bV_{t}$. We claim that 
\begin{align}\label{embedding instance}
 \begin{minipage}[c]{0.8\textwidth}\em
$(H^{t+1},G^{t+1},\cA,c^+\cup c_t)$ is an $(\eps_t,\bd,(d_A^t,\bd^t),t,\Lambda)$-embedding-instance,
 \end{minipage}\ignorespacesafterend
\end{align}
where $\cA:=(A^t_{t+1},\ldots,A^t_r,B_1^t,\ldots,B_r^t)$ and $\bd:=(d_A,\ldots,d_A,d_B,\ldots,d_B)$ ($d_A$ repeated $r-t-1$ times and $d_B$ repeated $r$ times).

First, note that the colouring $c^+\cup c_t$ is locally $\Lambda$-bounded and colour-split with respect to the vertex partition $$(X_{t+1},\ldots,X_r,X_1^B,\ldots,X_r^B,V_{t+1},\ldots,V_r,V_1^B,\ldots,V_r^B)$$ of $G^{t+1}\cup\bigcup_{i\in [r]\sm[t]}A_i^t\cup\bigcup_{i\in [r]}B_i^t$. Moreover, the colour sets of $G^{t+1}$-edges have size~$1$ and the colour sets of candidacy graph edges have size at most~$t$.

Further, the super-regularity of the $G^{t+1}$-pairs follows from~\eqref{eq:G_Z superregular} (and for the pair $G^{t+1}[V_{t+1},V_{t+1}^B]$ from the choice of~$E_G^\ast$). 
Moreover, combining~\eqref{eq:G_Z superregular} with assumption~\ref{8.1 colour-split}, we infer that for every $i\in[r-t-1]$, the edge-colouring 
\begin{align*}
\text{$c$ restricted to $G_A[V_{t+1},V_{t+1+i}]$ is $(1+\eps_t)\frac{e_{G_A}(V_{t+1},V_{t+1+i})}{e_{H}(X_{t+1},X_{t+1+i})}$-bounded}.
\end{align*}
\COMMENT{By assumption, $c$ restricted to $G_A[V_i,V_j]$ is $(1-\gamma)\frac{e_G(V_i,V_j)}{e_H(X_i,X_j)}$-bounded; $(1-\gamma)e_G(V_i,V_j)\leq (1+\eps_t)e_{G_A}(V_i,V_j)$ since $e_G(V_i,V_j)\le (d+\eps)|V_i||V_j|$ and $e_{G_A}(V_i,V_j)\ge ((1-\gamma)d-\eps_0^2)|V_i||V_j|$ and $\frac{(1-\gamma)d-\eps_0^2}{d+\eps} \ge \frac{1-\gamma}{1+\eps_t}$.}

Finally, the super-regularity of the candidacy graphs and the boundedness of their colourings follows from~\ind{t}.
We conclude that \eqref{embedding instance} holds.
Hence, we can apply Lemma~\ref{lem:embedding lemma} to this instance with the following parameters:

\medskip
{
\noindent
{
\begin{center}
\begin{tabular}{c|c|c|c|c|c|c|c|c}
 $|X_{t+1}|$ & $\eps_t$ & $\eps_{t+1}$  & $t$ & $r-t-1+r$ & $\Lambda$ & $n^{1/3}$ & $\bd$ & $(d_A^t,\bd^t)$  \\ \hline
 $n$ & $\eps$ & $\eps'$ & $t$ & $r$ & $\Lambda$ & $K$ & $(d_i^G)_{i\in[r]}$ & $(d_i)_{i\in[r]_0}$ \vphantom{\Big(}
\end{tabular}
\end{center}
}
}
\medskip

Let $\sigma\colon X_{t+1}^\sigma\to V_{t+1}^\sigma$ be the rainbow matching in $A_{t+1}^t$ obtained from Lemma~\ref{lem:embedding lemma} with $|X_{t+1}^\sigma|\geq (1-\eps_{t+1})|X_{t+1}|$.
The matching $\sigma$ extends $\phi_t$ to a $(t+1)$-partial rainbow embedding $\phi_{t+1}$ as defined in~\eqref{def new phi}.
By Definition~\ref{def:updated candidacy}, the updated candidacy graphs with respect to $\sigma$ obtained from Lemma~\ref{lem:embedding lemma} are also updated candidacy graphs with respect to $\phi_{t+1}$ as defined in Step~\ref{step:candidacy graphs}. (More precisely, we have $Z_i^{t,\sigma}\In Z_i(\phi_{t+1})$ for $Z\in \Set{A,B}$.)
Hence, by Lemma~\ref{lem:embedding lemma}, we obtain new candidacy graphs $A_i^{t+1}\subseteq A_i(\phi_{t+1})$ for $i\in[r]\sm[t+1]$ and $B_i^{t+1}\subseteq B_i(\phi_{t+1})$ for $i\in[r]$ that satisfy ~\ref{cond:4graphs i}--\ref{cond:4graphs iii}.
By \ref{cond:4graphs i}, we know that $A_i^{t+1}$ is $(\eps_{t+1},d_A^{t+1})$-super-regular for every $i\in[r]\sm[t+1]$, and $B_i^{t+1}$ is $(\eps_{t+1},d_B^{t+1})$-super-regular for every $i\in[r]$, which implies~\ind{t+1}\ref{candidacy superregular} and~\ind{t+1}\ref{B-candidacy superregular}.
Moreover, the new colouring $c_{t+1}$ as defined in~\eqref{eq:c_t} corresponds to the updated colouring as in Definition~\ref{def:updated colouring}, so we can assume that $c_{t+1}$ satisfies~\ref{cond:4graphs ii} and \ref{cond:4graphs iii}.
Thus, for every $i\in[r]\sm[t+1]$, the colouring $c_{t+1}$ restricted to $A_i^{t+1}$ is $(1+\eps_{t+1})d_A^{t+1}|X_{i}|$-bounded by~\ref{cond:4graphs ii}, and has codegree at most $n^{1/3}$ by~\ref{cond:4graphs iii}.
This implies \ind{t+1}\ref{c_t bounded}, and hence completes the inductive step.

\begin{step}\label{step:absorbing}
Completion
\end{step}

We may assume that $\phi_r\colon \bX_r^{\phi_r}\to \bV_r^{\phi_r}$ is an $r$-partial embedding fulfilling \ind{r} with $(\eps_r,d_B^r)$-super-regular candidacy graphs $B_i^r\subseteq B_i(\phi_r)$.
Recall that we defined the bipartite candidacy graphs $B_i^r$ on copies $(X_i^B,V_i^B)$ only to conveniently apply Lemma~\ref{lem:embedding lemma} in Step~\ref{step:induction}.
We now identify $B_i^r$ with a bipartite graph $B_i'$ on $(X_i,V_i)$ and edge set $E(B_i'):=\{x_iv_i\colon \pi(x_i)\pi(v_i)\in E(B_i^r) \}$. Hence, for each $i\in[r]$, $B_i'$ is $(\eps_r,d_B^r)$-super-regular and for every edge $x_iv_i\in E(B_i')$, we deduce from \eqref{eq:def candidates} that
\begin{align}\label{eq:def candidates final}
\phi_r(N_{H}(x_i)\cap \bX_r^{\phi_r})
\subseteq N_{G_B}(v_i).
\end{align}

We want to apply Lemma~\ref{lem:blow up matchings} in order to complete the embedding using the edges in $G_B$ and the candidacy graphs $(B_i')_{i\in[r]}$.
For every $i\in[r]$, let $\Vc_i:=V_i\sm V_i^{\phi_r}$ and $\Xc_i:=X_i\sm X_i^{\phi_r}$ be the sets of unused/unembedded vertices. Note that we have no control over these sets except knowing that they are very small. To be able to apply Lemma~\ref{lem:blow up matchings}, we now (randomly) add vertices that have already been embedded back to the unembedded vertices. That is, we will find sets $V_i'\supseteq \Vc_i$ and $X_i'\supseteq\Xc_i$ of size exactly $n_B:=\lceil\mu n\rceil$ (same size required for condition~\ref{lem:cond2} in Lemma~\ref{lem:blow up matchings}) such that $B_i'[X_i',V_i']$ is still super-regular.

For the application of Lemma~\ref{lem:blow up matchings}, we also have to ensure that not only the colouring $c$ restricted to $G_B[V_1'\cup\ldots \cup V_r']$ is sufficiently bounded (see property~\ref{G_B mu bounded} below), but also that the colouring $c$ restricted to $G_B$ between already embedded sets $V_i\sm V_i'$ and sets $V_j'$ used for the completion is sufficiently bounded (see property~\ref{G_B between mu bounded} below).
Therefore, for $i,j\in [r]$\COMMENT{before: $ij\in\binom{[r]}{2}$, but this not symmetric, need $r^2$ conditions}, let $G_B^{hit}[V_i\sm V_i',V_j']$ be the spanning subgraph of $G_B[V_i\sm V_i',V_j']$ containing those edges $v_iv_j\in E(G_B[V_i\sm V_i',V_j'])$ for which $\phi_r^{-1}(v_i)$ has an $H$-neighbour in~$X_j'$.
That is, $G_B^{hit}[V_i\sm V_i',V_j']$ contains all the edges between $V_i\sm V_i'$ and $V_j'$ that will potentially be used to extend the partial embedding when applying Lemma~\ref{lem:blow up matchings}.

We claim that sets $\Vc_i^+\subseteq V_i^{\phi_r}$ can be chosen such that, setting $\Xc_i^+:=\phi_r^{-1}(\Vc_i^+)$, $V_i':=\Vc_i\cup\Vc_i^+$, and $X_i':=\Xc_i\cup\Xc_i^+$, we have:
\begin{enumerate}[label={\rm (\alph*)}]
\item\label{G_B superregular} $G_B[V_i',V_j']$ is $(\eps_{r+1},d_B)$-super-regular for all $ij\in\binom{[r]}{2}$;
\item\label{B_i superregular} $B_i'[X_i',V_i']$ is $(\eps_{r+1},d_B^r)$-super-regular for every $i\in[r]$;\COMMENT{By Chernoff's inequality, we may conclude that every vertex has the correct degree. Note that for each vertex $v\in V_i^{\phi_r}$ each neighbour in $B_i$ (also $\phi^{-1}(v)$) survives independently at random with probability $p_i$.
The regularity follows since $B_i$ is $(\eps_r,d_B^r)$-super-regular.}
\item\label{G_B mu bounded} the colouring $c$ restricted to $G_B[V_1'\cup\ldots\cup V_r']$ is $\mu^{3/2}n$-bounded;\COMMENT{
Note that for $v\in V_i^{\phi_r}$ we have that $$\prob{v\in \Vc_i^+}\leq 2\mu.$$
Let $\alpha$ be a colour that appears on $G_B[V_i,V_j]$. 
Note that $\alpha$ appears on at most $2\Lambda n$ edges of $G_B[V_i,V_j]$.
Let $X$ be the number of $\alpha$-coloured edges in $G_B[\Vc_i^+,\Vc_j^+]$. 
We have
$$\expn{X}\leq 8\mu^2\Lambda n.$$
Using McDiarmid's inequality we conclude that 
$$\prob{X-\expn{X}>\mu^2\Lambda n}\leq \exp(-n^{4/5}).$$
Hence, with probability at least $1-\exp(-n^{4/5})$, we have that $\alpha$ appears at most $8\mu^2\Lambda rn$ times in $G_B[\Vc_i^+,\Vc_j^+]$, and thus, since $|\Vc_i|\leq 2\eps_rn$, at most $\mu^{3/2}n$ times in $G_B[V_i',V_j']$.
}
\item\label{G_B between mu bounded} the colouring $c$ restricted to $G_B^{hit}[V_i\sm V_i',V_j']$ is $\mu^{3/2}n$-bounded for all $i,j\in[r]$;
\item\label{size V_i'} $|V_i'|=|X_i'|=n_B$ for every $i\in[r]$.
\end{enumerate}

This can be seen with a probabilistic argument. Independently for every $i\in[r]$ and $v\in V_i^{\phi_r}$, let $v$ belong to $\Vc_i^+$ with probability $p_i:=(n_B-|\Vc_i|)/|V_i^{\phi_r}|$. We now show that~\ref{G_B superregular}--\ref{size V_i'} hold simultaneously with positive probability.

Note that $p_i=\mu\pm\sqrt{\eps_r}$.
Recall that $G_B[V_i,V_j]$ is $(\eps_0,d_B)$-super-regular, $B_i'$ is $(\eps_r,d_B^r)$-super-regular, $|\Vc_i|=|\Xc_i|\leq 2\eps_r n$, and $c$ is locally $\Lambda$-bounded. Using Chernoff's bound, it is routine to show that~\ref{G_B superregular} and~\ref{B_i superregular} hold with probability at least $1-\eul^{-\sqrt{n}}$, say. Note here that the regularity follows easily from the regularity of the respective supergraphs.\COMMENT{For this we might need that the vertex sets are not too small, but this also holds with high probability by Chernoff.}

We show next that also~\ref{G_B between mu bounded} holds with high probability. 
Let $i,j\in [r]$ and let $\alpha$ be a colour.
Let~$X$ be the number of $\alpha$-coloured edges $v_iv_j$ in $G_B[V_i\sm V_i',V_j'])$ for which $v_j\in \Vc_j^+$ and $\phi_r^{-1}(v_i)$ has an $H$-neighbour in~$\Xc_j^+$. 
Note that since $|\Vc_j|=|\Xc_j|\leq 2\eps_r n$ and $c$ is locally $\Lambda$-bounded, the number of $\alpha$-coloured edges $v_iv_j$ in $G_B[V_i\sm V_i',V_j'])$ for which $v_j\in \Vc_j$ or $\phi_r^{-1}(v_i)$ has an $H$-neighbour in~$\Xc_j$, is at most $4\Lambda\eps_r n$. Now, consider an edge $v_iv_j\in E(G_B[V_i^{\phi_r},V_j^{\phi_r}])$ with $\{x_j\}=N_H(\phi_r^{-1}(v_i))\cap X_j^{\phi_r}$. 
Crucially, observe that $x_j\neq \phi_r^{-1}(v_j)$ because $v_iv_j$ is an edge in $G_B$ and therefore not in~$G_A$.\COMMENT{which implies that $\phi_r$ would not be a valid embedding into $G_A$, a contradiction.} 
This implies that
\begin{align*}
\prob{v_j\in \Vc_j^+, x_j\in \Xc_j^+}= p_j^2 \leq 2\mu^2.
\end{align*}
Since $c$ is locally $\Lambda$-bounded, $\alpha$ appears on at most $2\Lambda n$ such edges $v_iv_j$ and hence
\begin{align*}
\expn{X}\leq 4\Lambda\mu^2 n.
\end{align*}
Since $c$ is locally $\Lambda$-bounded, an application of McDiarmid's inequality yields that, with probability at least~$1-\eul^{-n^{2/3}}$, we have $X\le 5\Lambda\mu^2 n$, which implies that the number of $\alpha$-coloured edges in $G_B^{hit}[V_i\sm V_i',V_j']$ is at most~$\mu^{3/2}n$.
Together with a union bound, we infer that~\ref{G_B between mu bounded} holds with probability at least $1-\eul^{-\sqrt{n}}$.

A similar (even simpler) argument using the local boundedness of $c$ and McDiarmid's inequality also works for~\ref{G_B mu bounded}.
Thus, a union bound implies that~\ref{G_B superregular}--\ref{G_B between mu bounded} hold simultaneously with probability at least $1-4\eul^{-\sqrt{n}}$.
Moreover, standard properties of the binomial distribution yield that $|\Vc_i^+|=n_B-|\Vc_i|$ (and thus, $|V_i'|=|X_i'|=n_B$) for all $i\in[r]$ with probability at least $\Omega(n^{-r/2})$. Hence, there exist such sets $X_i'$ and $V_i'$ for all $i\in[r]$ satisfying~\ref{G_B superregular}--\ref{size V_i'}.

Let
\begin{align*}
\cX_r' &:=\textstyle\bigcup_{i\in[r]}X_i',&
\cV_r' &:=\textstyle\bigcup_{i\in[r]}V_i',\\
X_0' &:= \cX_r\sm \cX_r', &
V_0' &:= \cV_r\sm \cV_r'.
\end{align*}
The restriction of $\phi_r$ to $X_0'$ clearly yields a rainbow embedding $\psi_0\colon X_0'\to V_0'$ of $H[X_0']$ into~$G_A[V_0']$.
Let $G':=G_B[\cV_r']\cup G_B^{hit}[V_0',\cV_r']$, and let $H'$ be the subgraph of $H$ with partition $(X_i')_{i\in[r]_0}$ that arises from $H$ by discarding all edges in $H[X_0']$.
(This is feasible since edges within $X_0'$ have already been embedded by~$\psi_0$.)
By~\ref{G_B superregular}~and~\ref{B_i superregular}, we have that $(H',G',(X_i')_{i\in[r]_0},(V_i')_{i\in[r]_0})$ is an $(\eps_{r+1},d_B)$-super-regular blow-up instance with exceptional sets $(X_0',V_0')$ and $(\eps_{r+1},d_B^r)$-super-regular candidacy graphs $(B_i'[X_i',V_i'])_{i\in[r]}$. Moreover, $c$ restricted to $G'$ is $\mu^{1/2}n_B$-bounded by~\ref{G_B mu bounded} and~\ref{G_B between mu bounded}, and all clusters have the same size $n_B$ by~\ref{size V_i'}. Further,
\begin{itemize}
\item from~\eqref{eq:def candidates final} and the definition of $G_B^{hit}$, it holds that for all $x\in X_0'$, $i\in[r]$ and $x_i\in N_{H'}(x)\cap X_i'$,
we have $N_{B_i'}(x_i)\subseteq N_{G'}(\psi_0(x))$;\COMMENT{Consider $v_i\in N_{B_i'}(x_i)$. By~\eqref{eq:def candidates final}, we have $\phi_r(x)\in N_{G_B}(v_i)$. Since $x\in X_0'$, we have $\phi_r(x)=\psi_0(x)$. Hence, $\psi_0(x)v_i\in E(G_B)$. Moreover, since $x$ has an $H$-neighbour in $X_i'$, we included $\psi_0(x)v_i$ in $G_B^{hit}$.}

\item for all $i\in [r]$, $x\in X_i'$, $v\in N_{B_i'}(x)$ and distinct $x_0,x_0'\in N_{H'}(x)\cap X_0'$, we have $c(\psi_0(x_0)v)\neq c(\psi_0(x_0')v)$ because $\psi_0(x_0)$ and $\psi_0(x_0')$ belong to different clusters of $(V_i)_{i\in[r]}$ and~$c$ is colour-split with respect to $(V_i)_{i\in[r]}$.
\end{itemize}
Hence, we can finally apply Lemma~\ref{lem:blow up matchings} to obtain a rainbow embedding $\psi$ of $H'$ into $G'$ which extends~$\psi_0$, such that $\psi(x)\in N_{B_i'}(x)$ for all $i\in[r]$ and $x\in X_i'$.
Since the colours of $c$ restricted to $G'\In G_B$ are distinct from the colours already used by~$\psi_0$, it holds that $\psi$ is a valid rainbow embedding of $H$ into~$G$.
This completes the proof.
\endproof

%
%

\bibliographystyle{../amsplain_v2.0customized}
\bibliography{../ReferencesLocal}

\providecommand{\bysame}{\leavevmode\hbox to3em{\hrulefill}\thinspace}
\providecommand{\MR}{\relax\ifhmode\unskip\space\fi MR }
\providecommand{\MRhref}[2]{%
  \href{http://www.ams.org/mathscinet-getitem?mr=#1}{#2}
}
\providecommand{\href}[2]{#2}
\begin{thebibliography}{10}

\bibitem{AAGH:16}
A.~Adamaszek, P.~Allen, C.~Grosu, and J.~Hladk\'y, \emph{Almost all trees are
  almost graceful}, arXiv:1608.01577 (2016).

\bibitem{AFR:95}
M.~Albert, A.~Frieze, and B.~Reed, \emph{Multicoloured {H}amilton cycles},
  Electron. J. Combin.~\textbf{2} (1995), Art.~10, 13~pages.

\bibitem{ABCT:19}
P.~Allen, J.~B\"ottcher, D.~Clemens, and A.~Taraz, \emph{Perfectly packing
  graphs with bounded degeneracy and many leaves}, arXiv:1906.11558 (2019).

\bibitem{ABHKP:16}
P.~Allen, J.~B\"ottcher, H.~H\`an, Y.~Kohayakawa, and Y.~Person, \emph{Blow-up
  lemmas for sparse graphs}, arXiv:1612.00622 (2016).

\bibitem{ABHP:17}
P.~Allen, J.~B\"ottcher, J.~Hladk\'y, and D.~Piguet, \emph{Packing degenerate
  graphs}, Adv. Math. (to appear).

\bibitem{BKP:12}
J.~B\"ottcher, Y.~Kohayakawa, and A.~Procacci, \emph{Properly coloured copies
  and rainbow copies of large graphs with small maximum degree}, Random
  Structures Algorithms~\textbf{40} (2012), 425--436.

\bibitem{BKTW:15}
J.~B\"{o}ttcher, Y.~Kohayakawa, A.~Taraz, and A.~W\"{u}rfl, \emph{An extension
  of the blow-up lemma to arrangeable graphs}, SIAM J. Discrete
  Math.~\textbf{29} (2015), 962--1001.

\bibitem{BST:09}
J.~B\"ottcher, M.~Schacht, and A.~Taraz, \emph{Proof of the bandwidth
  conjecture of {B}ollob\'as and {K}oml\'os}, Math. Ann.~\textbf{343} (2009),
  175--205.

\bibitem{CP:17}
M.~Coulson and G.~Perarnau, \emph{Rainbow matchings in {D}irac bipartite
  graphs}, Random Structures Algorithms (to appear).

\bibitem{csaba:07}
B.~Csaba, \emph{On the {B}ollob\'as-{E}ldridge conjecture for bipartite
  graphs}, Combin. Probab. Comput.~\textbf{16} (2007), 661--691.

\bibitem{DL:14}
M.~Drmota and A.~Llad\'{o}, \emph{Almost every tree with {$m$} edges decomposes
  {$K_{2m,2m}$}}, Combin. Probab. Comput.~\textbf{23} (2014), 50--65.

\bibitem{DLR:95}
R.~A.~Duke, H.~Lefmann, and V.~R{\"o}dl, \emph{A fast approximation algorithm
  for computing the frequencies of subgraphs in a given graph}, SIAM J.
  Comput.~\textbf{24} (1995), 598--620.

\bibitem{EGJ:19a}
S.~Ehard, S.~Glock, and F.~Joos, \emph{Pseudorandom hypergraph matchings},
  preprint (2019).

\bibitem{ENR:83}
P.~Erd\H{o}s, J.~Ne\v{s}et\v{r}il, and V.~R\"odl, \emph{On some problems
  related to partitions of edges of a graph}, Graphs and other combinatorial
  topics, Teubner-Texte Math.~59, Teubner, 1983, pp.~54--63.

\bibitem{gallian:98}
J.~A.~Gallian, \emph{A dynamic survey of graph labeling}, Electron. J.
  Combin.~\textbf{5} (1998), Dynamic Survey 6, 43.

\bibitem{GJ:18}
S.~Glock and F.~Joos, \emph{A rainbow blow-up lemma}, arXiv:1802.07700 (2018).

\bibitem{GJKKO:18}
S.~Glock, F.~Joos, J.~Kim, D.~K\"uhn, and D.~Osthus, \emph{Resolution of the
  {O}berwolfach problem}, arXiv:1806.04644 (2018).

\bibitem{GKLO:16}
S.~Glock, D.~K\"uhn, A.~Lo, and D.~Osthus, \emph{The existence of designs via
  iterative absorption}, arXiv:1611.06827 (2016).

\bibitem{GKLO:17}
\bysame, \emph{Hypergraph {$F$}-designs for arbitrary~{$F$}}, arXiv:1706.01800
  (2017).

\bibitem{GS:80}
R.~L.~Graham and N.~J.~A.~Sloane, \emph{On additive bases and harmonious
  graphs}, SIAM J. Algebraic Discrete Methods~\textbf{1} (1980), 382--404.

\bibitem{GMR:97}
H.-D. O.~F.~Gronau, R.~C.~Mullin, and A.~Rosa, \emph{Orthogonal double covers
  of complete graphs by trees}, Graphs Combin.~\textbf{13} (1997), 251--262.

\bibitem{HS:70}
A.~Hajnal and E.~Szemer\'edi, \emph{Proof of a conjecture of {E}rd{\H{o}}s},
  In:~Combinatorial {T}heory and its {A}pplications {II} (P.~Erd\H{o}s,
  A.~R\'enyi, and V.T.~S\'os, eds.), North Holland, 1970, pp.~601--623.

\bibitem{JLR:00}
S.~Janson, T.~{\L}uczak, and A.~Ruci\'{n}ski, \emph{Random graphs},
  Wiley-Intersci. Ser. Discrete Math. Optim., Wiley-Interscience, 2000.

\bibitem{JKKO:ta}
F.~Joos, J.~Kim, D.~K\"uhn, and D.~Osthus, \emph{Optimal packings of bounded
  degree trees}, J. Eur. Math. Soc. (to appear).

\bibitem{keevash:11}
P.~Keevash, \emph{A hypergraph blow-up lemma}, Random Structures
  Algorithms~\textbf{39} (2011), 275--376.

\bibitem{keevash:14}
\bysame, \emph{The existence of designs}, arXiv:1401.3665 (2014).

\bibitem{keevash:18b}
\bysame, \emph{The existence of designs~{II}}, arXiv:1802.05900 (2018).

\bibitem{KKKO:18}
J.~Kim, D.~K\"uhn, A.~Kupavskii, and D.~Osthus, \emph{Rainbow structures in
  locally bounded colourings of graphs}, arXiv:1805.08424 (2018).

\bibitem{KKOT:19}
J.~Kim, D.~K\"uhn, D.~Osthus, and M.~Tyomkyn, \emph{A blow-up lemma for
  approximate decompositions}, Trans. Amer. Math. Soc.~\textbf{371} (2019),
  4655--4742.

\bibitem{KSS:97}
J.~Koml\'os, G.~N.~S\'ark\"ozy, and E.~Szemer\'edi, \emph{Blow-up lemma},
  Combinatorica~\textbf{17} (1997), 109--123.

\bibitem{KSS:98}
\bysame, \emph{On the {P}\'osa-{S}eymour conjecture}, J. Graph
  Theory~\textbf{29} (1998), 167--176.

\bibitem{KSS:98a}
\bysame, \emph{Proof of the {S}eymour conjecture for large graphs}, Ann.
  Comb.~\textbf{2} (1998), 43--60.

\bibitem{KSS:01}
\bysame, \emph{Proof of the {A}lon-{Y}uster conjecture}, Discrete
  Math.~\textbf{235} (2001), 255--269.

\bibitem{kotzig:73}
A.~Kotzig, \emph{On certain vertex-valuations of finite graphs}, Utilitas
  Math.~\textbf{4} (1973), 261--290.

\bibitem{KO:09}
D.~K\"uhn and D.~Osthus, \emph{The minimum degree threshold for perfect graph
  packings}, Combinatorica~\textbf{29} (2009), 65--107.

\bibitem{KO:13}
\bysame, \emph{Hamilton decompositions of regular expanders: {A} proof of
  {K}elly's conjecture for large tournaments}, Adv. Math.~\textbf{237} (2013),
  62--146.

\bibitem{mcdiarmid:89}
C.~McDiarmid, \emph{On the method of bounded differences}, Surveys in
  combinatorics, 1989 ({N}orwich, 1989), London Math. Soc. Lecture Note Ser.
  141, Cambridge Univ. Press, 1989, pp.~148--188.

\bibitem{MPS:19}
R.~Montgomery, A.~Pokrovskiy, and B.~Sudakov, \emph{Decompositions into
  spanning rainbow structures}, Proc. Lond. Math. Soc.~\textbf{119} (2019),
  899--959.

\bibitem{MPS:18}
\bysame, \emph{Embedding rainbow trees with applications to graph labelling and
  decomposition}, J. Eur. Math. Soc. (to appear).

\bibitem{rodl:85}
V.~R\"odl, \emph{On a packing and covering problem}, European J.
  Combin.~\textbf{6} (1985), 69--78.

\bibitem{RR:99}
V.~R\"odl and A.~Ruci\'nski, \emph{Perfect matchings in {$\epsilon$}-regular
  graphs and the blow-up lemma}, Combinatorica~\textbf{19} (1999), 437--452.

\bibitem{rosa:67}
A.~Rosa, \emph{On certain valuations of the vertices of a graph}, Theory of
  {G}raphs ({I}nternat. {S}ympos., {R}ome, 1966), Gordon and Breach, New York;
  Dunod, Paris, 1967, pp.~349--355.

\bibitem{zak:09}
A.~\.{Z}ak, \emph{Harmonious order of graphs}, Discrete Math.~\textbf{309}
  (2009), 6055--6064.

\end{thebibliography}

\end{document}